\newtheorem{theorem}{Theorem}[section]
\newtheorem{lemma}[theorem]{Lemma}
\theoremstyle{definition}
\newtheorem{definition}[theorem]{Definition}
\newtheorem{proposition}[theorem]{Proposition}
\theoremstyle{remark}
\newtheorem{remark}[theorem]{Remark}
\newcommand{\inner}[1]{\left\langle#1\right\rangle}
\newcommand{\norm}[1]{\left\lVert#1\right\rVert}
\newcommand{\abs}[1]{\left\lvert#1\right\rvert}
\newcommand{\pa}[1]{\left( #1 \right)}
\newcommand{\rpa}[1]{\left[ #1 \right]}
\newcommand{\br}[1]{\left\lbrace #1\right\rbrace}
\newcommand{\R}{\mathbb{R}}
\newcommand{\N}{\mathbb{N}}
\newcommand{\bs}{\overline{s}}
\newcommand{\bp}{\overline{p}}
\newcommand{\bc}{\overline{c}}
\numberwithin{equation}{section}
\definecolor{bostonuniversityred}{rgb}{0.8, 0.0, 0.0}
\definecolor{byzantium}{rgb}{0.44, 0.16, 0.39}
\newcommand{\marcel}[1]{{\color{black}#1}}
\newcommand{\na}{\nabla}
\newcommand{\lam}{\lambda}
\newcommand{\p}{\partial} 
\newcommand{\wt}{\widetilde}
\renewcommand{\O}{\Omega}
\newcommand{\einf}{e_{\infty}}
\newcommand{\sinf}{s_{\infty}}
\newcommand{\cinf}{c_{\infty}}
\newcommand{\pinf}{p_{\infty}}
\newcommand{\hh}{\mathfrak{h}}
\newcommand{\bh}{\mathbf{h}}
\newcommand{\eps}{\varepsilon}
\renewcommand{\epsilon}{\varepsilon}
\renewcommand{\phi}{\varphi}
\newcommand{\LO}[1]{L^{#1}(\Omega)}
\renewcommand{\H}{\mathscr{H}}
\newcommand{\E}{\mathcal{E}}
\newcommand{\M}{\mathscr{M}}
\newcommand{\h}{\mathcal{h}}
\renewcommand{\d}{\mathcal{d}}
\renewcommand{\r}{\mathcal{r}}
\newcommand{\m}{\d_{\mathscr{M}}}
\renewcommand{\bh}{\mathbf{h}}
\newcommand{\clsi}{C_{\mathrm{LSI}}}
\newcommand{\ccpk}{C_{\mathrm{CPK}}}
\newcommand{\su}{\mathcal{S}}
\title[Irreversible enzyme reactions]{Quantitative dynamics of irreversible enzyme reaction-diffusion systems}
\author[M. Braukhoff]{Marcel Braukhoff}
\address{Marcel Braukhoff \hfill\break
	Mathematisches Institut der Heinrich Heine Universit\"at D\"usseldorf, Universit\"atsstra\ss e 1, D-40225 D\"usseldorf, Germany}
\email{marcel.braukhoff@hhu.de}
\author[A. Einav]{Amit Einav}
\address{Amit Einav \hfill\break 
	School of Mathematical Sciences, Durham University, Upper Mountjoy Campus, Stockton Road DH1 3LE Durham, United Kingdom}
\email{amit.einav@durham.ac.uk}
\author[B.Q. Tang]{Bao Quoc Tang}
\address{Bao Quoc Tang \hfill\break
	Institute of Mathematics and Scientific Computing, University of Graz, 
	Heinrichstrasse 36, 8010 Graz, Austria}
\email{quoc.tang@uni-graz.at, baotangquoc@gmail.com}
\thanks{
	We would like to thank J\'an Elia\v{s} for the discussion concerning the case of non-diffusive enzyme and complex molecules.\\	
	The third author has been partially supported by NAWI Graz.}
\begin{document}

\begin{abstract}
	In this work we investigate the convergence to equilibrium for mass action reaction-diffusion systems which model irreversible enzyme reactions. Using the standard entropy method in this situation is not feasible as the irreversibility of the system implies that the concentrations of the substrate and the complex decay to zero. The key idea we utilise in this work to circumvent this issue is to introduce a family of cut-off partial entropy functions which, when combined with the dissipation of a mass like term of the substrate and the complex, yield an explicit exponential convergence to equilibrium. This method is also applicable in the case where the enzyme and complex molecules do not diffuse, corresponding to chemically relevant situation where these molecules are large in size. 
\end{abstract}

\maketitle

\noindent{\tiny
Key words:  Enzyme Reactions; Reaction-Diffusion Systems; Irreversibility; Entropy Method; Degenerate Diffusion}
\\{\tiny{MSC. 35B40, 35K57, 35Q92, 80A30, 92C45}} 

\setcounter{tocdepth}{1}
\tableofcontents

\section{Introduction}\label{sec:intro}
The focus of our work will be on the well known irreversible enzyme reaction system
\begin{equation}\label{ER}
	E + S \underset{k_f}{\overset{k_r}{\leftrightarrows}} C \xrightarrow{k_c} E + P
\end{equation}
where $S$ represents the substrate of the system, $E$ the enzymes, $C$ the intermediate complex (which we will refer to as the complex for simplicity) and $P$ the product.
\subsection{The setting of the problem}\label{subsec:setting}
Our study will concern itself with the inhomogeneous setting of \eqref{ER} which manifests itself in the presence of diffusion of (some of) the system's elements. By applying Fick's law of diffusion and the law of mass action we find that the reaction-diffusion system that corresponds to \eqref{ER} is given by
\begin{equation}\label{Sys}
	\begin{cases}
		\p_t e(x,t) - d_e\Delta e(x,t) = -k_fe(x,t)s(x,t) + (k_r+k_c)c(x,t), &x\in\Omega,t>0,\\
		\p_t s(x,t) - d_s\Delta s(x,t) = -k_fe(x,t)s(x,t) + k_rc(x,t), &x\in\Omega, t>0,\\
		\p_t c(x,t) - d_c\Delta c(x,t) = k_fe(x,t)s(x,t) - (k_r+k_c)c(x,t), &x\in\Omega,t>0,\\
		\p_t p(x,t) - d_p\Delta p(x,t) = k_cc(x,t), &x\in\Omega, t>0,\\
		{d_e\p_\nu e(x,t) = d_s\p_\nu s(x,t) = d_c\p_\nu c(x,t) = d_p\p_\nu p(x,t) = 0}, &x\in\p\Omega, t>0,\\
		e(x,0) = e_0(x), \;s(x,0) = s_0(x),\; c(x,0) = c_0(x),\; p(x,0) = p_0(x), & x\in\Omega,
	\end{cases}
\end{equation}
where $e(x,t), s(x,t), c(x,t), p(x,t)$ are the concentrations of $E$, $S$, $C$ and $P$, respectively, at $x\in\Omega$ and $t>0$. The homogeneous Neumann boundary condition indicates that the system is closed, and in order that the equations would make chemical sense we also require that the initial concentrations $e_0(x), s_0(x)$, $c_0(x)$ and $p_0(x)$ are non-negative functions\footnote{{As the Neumann condition is connected to the diffusion of the concentration, we have elected to write it as 
$$d_e\p_\nu e(x,t) = d_s\p_\nu s(x,t) = d_c\p_\nu c(x,t) = d_p\p_\nu p(x,t) = 0$$
to indicate that we do not require it when the diffusion coefficient is zero.}}.  


\medskip
Looking at the system \eqref{Sys}, one notices immediately that the equation that governs the concentration of $P$, $p(x,t)$, is decoupled from the rest of the system and is completely solvable once $c(x,t)$ has been found. Therefore, our main focus in the majority of this work will be on the dynamics of the sub-system of \eqref{Sys} which includes $e$, $s$ and $c$ alone. 

Much like many other reaction-diffusion systems in a bounded domain, one expects that the combination of the chemical reactions and the diffusion will result in a state of equilibrium that is composed of constant concentrations. Using the (formal) conservation laws 
\begin{equation}\label{law1}
	\int_{\O}(e(x,t) + c(x,t))dx = M_0:=\int_{\O} (e_0(x)+c_0(x))dx,\quad \forall t\geq 0,
\end{equation}
\begin{equation}\label{law2}
	\int_{\O}(s(x,t)+c(x,t)+p(x,t))dx =M_1:= \int_{\O}(s_0(x)+c_0(x)+p_0(x))dx,\quad \forall t\geq 0,
\end{equation}
and under the assumption that $\abs{\O}=1$ for simplicity (which can always be achieved by a simple rescaling of the spatial variable) we find that if all elements in the system diffuse, i.e. if $d_e,d_s,d_c$ and $d_p$ are strictly positive, then the equations that determine the \textit{constant} equilibrium concentrations $e_{\infty}, c_{\infty}, s_{\infty}$ and $p_\infty$ are
\begin{equation*}
	\begin{cases}
		-k_fe_{\infty}s_{\infty} + (k_r+k_c)c_{\infty} = 0,\\
		-k_fe_{\infty}s_{\infty} + k_cc_{\infty} = 0,\\
		k_c c_\infty = 0\\
		e_{\infty} + c_{\infty} = M_0,\\
		s_{\infty} + c_{\infty} + p_{\infty} = M_1,
	\end{cases}
\end{equation*}
from which we find that 
$$e_\infty=M_0,\quad c_\infty=0,\quad s_\infty=0, \quad p_\infty=M_1.$$
This equilibrium carries within it the chemical intuition of the process, as was expected: As time increases, the substrates get completely converted into product, the complex is used up, and the enzymes ``gobble up'' whatever left overs remain in the system.

The above, however, is not the true equilibrium when essential parts of the system \textit{do not diffuse}. In this case, we can't a-priori guarantee that an equilibrium state for these non-diffusing concentration, if one exists, will be a constant function. This situation is chemically feasible, for instance when the molecules of the complex and the enzymes are large enough to deter diffusion. In terms of our system \eqref{Sys} this situation corresponds to the case where $d_e=d_c=0$ and $d_s$ and $d_p$ are strictly positive. The lack of diffusion in the complex $c$ is not very problematic, yet the lack of diffusion in the enzymes $e$ complicates matter further. However, in this situation one can find another (formal) conservation law of the form\footnote{In this case we have that $\p_t\pa{e(x,t)+c(x,t)}=0$.}
\begin{equation}\label{law3}
	 e(x,t)+c(x,t)=e_0(x)+c_0(x),
\end{equation}
which assists in balancing the system. With this in mind, we see that if an equilibrium of the form $\einf(x)$, $c_\infty(x)$ and $s_\infty$ exists \footnote{The equilibrium for $e$ and $c$ could be a function of $x$, but $\sinf$ is still assumed to be constant due to the diffusion in $s$.}, then it must satisfy
\begin{equation*}
	\begin{cases}
		-k_fe_{\infty}(x)s_{\infty} + (k_r+k_c)c_{\infty}(x) = 0,\\
		-k_fe_{\infty}(x)s_{\infty} + k_cc_{\infty}(x) = 0,\\
		e_{\infty}(x) + c_{\infty}(x) = e_0(x)+c_0(x),\\
	\end{cases}
\end{equation*}
from which we find that $s_\infty=c_\infty(x)=0$ and $\einf(x)=e_0(x)+c_0(x)$. The fact that $c_\infty(x)=0$ would lead us to expect that $p(x,t)$ also converges to a constant $\pinf$ and due to \eqref{law2} we conclude that the suspected equilibrium in this case is given by
\begin{equation}\label{equilibrium_degenerate}
	\einf(x) = e_0(x) + c_0(x)\, \quad \sinf = \cinf (x)= 0, \quad \pinf = M_1. 
\end{equation}

The main goal of our work is to explicitly and quantitatively explore the rate of convergence to equilibrium of the solutions to \eqref{Sys} in these two cases in the strongest form possible - the $L^\infty$ norm. 

\subsection{Known results}\label{subsec:known_results}
The study of the trend to equilibrium for (bio-)chemical reaction-diffusion systems has  witnessed significant progress in the last decades. The first results in this direction can be attributed to \cite{groger1983asymptotic,groger1992free,glitzky1996free} where the large time behaviour of two dimensional such systems was studied qualitatively. Quantitative results, i.e. explicit convergence rates and constants, have been provided in \cite{desvillettes2006exponential,desvillettes2008entropy} for special systems, and have been extended later in \cite{desvillettes2017trend,mielke2015uniform} to more complicated ones. The most general equilibration results, which are currently feasible, are those found in \cite{mielke2016uniform,fellner2018convergence}. Despite these developments and advances, the quantitative large time behaviour of the enzyme reaction-diffusion system \eqref{Sys} was, to our knowledge, left completely untouched. The main reason, in our opinion, for this omission lies with the \textit{irreversible} nature of \eqref{ER},  and its impact the \textit{entropy method} which is commonly used to investigate quantitative long time behaviour. We shall address this point shortly, as our work will show how one can ``modify'' this method to attain our claimed results.

\subsection{Main results}\label{subsec:main}
As was indicated in the end of \S \ref{subsec:setting}, our work is devoted to the investigation of two cases for the system \ref{Sys}. Our main results can be expressed by the following theorems:
\begin{theorem}\label{thm:main}
	Let $\O\subset \R^n$ be a bounded, open domain with $C^{2+\zeta}$, $\zeta>0$, boundary, $\p\Omega$. Assume that $d_e,d_s,d_c$ and $d_p$ are strictly positive constants and that the initial data  $e_0(x), s_0(x), c_0(x)$ and $p_0(x) $ are all bounded non-negative functions. Then  there exists a unique non-negative bounded classical solution to \eqref{Sys}. Moreover, there exists an explicit $\gamma>0$ such that for any $\eta>0$ there exists an explicit constant $\mathcal{C}_{\eta}$, depending only on geometric properties and the initial data which blows up as $\eta$ goes to zero, such that
	\begin{equation}\label{eq:main_convergence_I}
		\begin{gathered}
	    \norm{c(t)}_{\LO{\infty}} + \norm{s(t)}_{\LO{\infty}}  \leq \mathcal{C}_{\eta}e^{-\frac{2\gamma t}{n\pa{1+\eta}}},\\
		\norm{e(t) - \einf}_{\LO{\infty}}\leq \mathcal{C}_{\eta}e^{-\frac{\gamma t}{n\pa{1+\eta}}}.
		\end{gathered}
	\end{equation}
 In addition for any $\eps>0$ and $\eta>0$ with $n\pa{1+\eta}\geq 4$ there exists an explicit constant $\mathcal{C}_{\eta,\eps}$, depending only on geometric properties and the initial data which blows up as $\eta$ or $\eps$ go to zero, such that
 	\begin{equation}\label{eq:main_convergence_II}
 	\begin{gathered}
 		\norm{p(t)-p_\infty}_{\LO{\infty}} \leq \mathcal{C}_{\eta,\eps}\pa{1+ t^{\frac{4}{n\pa{1+\eta}}\delta_{\scaleto{\frac{2d_p}{C_P}\pa{1-\eps},\gamma}{10pt}\mathstrut}}}e^{-\min\pa{\frac{4d_p}{nC_P\pa{1+\eta}}\pa{1-\eps},\frac{2\gamma}{n\pa{1+\eta}}}t}
 	\end{gathered}
 \end{equation}
where 
$$\delta_{x,y}=\begin{cases}
	1 & x=y \\
	0 & x\not=y
\end{cases}$$ 
 and $C_P$ is the Poincar\'e constant associated to the domain $\O$.
 \end{theorem}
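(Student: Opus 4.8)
The overall plan is to adapt the entropy method to the degenerate equilibrium $\sinf=\cinf=0$ and then to bootstrap the resulting $L^1$--$L^2$ decay up to $L^\infty$. Well-posedness is the easy part: local existence and uniqueness of a classical solution follows from standard parabolic theory on a $C^{2+\zeta}$ domain; non-negativity is preserved since the reaction vector field is quasi-positive (each reaction term is non-negative on the corresponding face $\{e=0\}$, $\{s=0\}$, $\{c=0\}$, $\{p=0\}$); and global existence together with a uniform-in-time $L^\infty$ bound follows from the $L^1$ control of \eqref{law1}--\eqref{law2} and the at most quadratic structure of the nonlinearities, via the by-now-standard duality and bootstrap techniques for mass-controlled reaction--diffusion systems, using in addition the partially decoupled structure of \eqref{Sys} (for instance, $e+c$ obeys the scalar equation $\p_t(e+c)=\Delta(d_ee+d_cc)$, after which $s$, $c$, $p$ are controlled by comparison). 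From now on $e,s,c,p$ denote this bounded non-negative solution and, since $\abs\O=1$, we write $\overline f=\int_\O f\,dx$ for the spatial average of $f$.

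The heart of the matter is the exponential decay, in $\LO1$, of $s$ and $c$. Integrating the $s$- and $c$-equations and using the Neumann condition gives the closed system $\frac{d}{dt}\overline s=-k_f\overline{es}+k_r\overline c$ and $\frac{d}{dt}\overline c=k_f\overline{es}-(k_r+k_c)\overline c$, whence $\frac{d}{dt}(\overline s+\overline c)=-k_c\overline c\le0$; thus $\overline s+\overline c$ is non-increasing, and since $\frac{d}{dt}\overline c$ is bounded this already forces $\overline c(t)\to0$, hence $\overline e(t)=M_0-\overline c(t)\to M_0$ and $\overline e(t)\ge M_0/2$ for large $t$. To turn this into an exponential rate one needs (i) to control the spatial inhomogeneities of $s$, $c$, $e$ so that $\overline{es}=\overline e\,\overline s+\big(\overline{es}-\overline e\,\overline s\big)$ equals $\overline e\,\overline s$ up to an error which is a small multiple of $\overline s$, and then (ii) to exploit that the resulting, essentially linear, $2\times2$ system for $(\overline s,\overline c)$ --- with matrix having rows $(-k_fM_0,\,k_r)$ and $(k_fM_0,\,-(k_r+k_c))$, hence negative trace and determinant $k_fM_0k_c>0$ --- has a spectral gap. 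For (i) we introduce, for each cut-off level $K>0$, a \emph{partial} entropy functional built only from $s$ and $c$, of the form $\int_\O\phi_K(s)\,dx+\int_\O\phi_K(c)\,dx$ where $\phi_K$ is a convex surrogate of $u\mapsto u\log u$ truncated for large $u$ and regularised near $u=0$; the truncation keeps $\phi_K'$ bounded and the functional and its dissipation finite and meaningful \emph{even as the $s,c$-masses vanish}, which is exactly where the classical relative entropy would fail. Differentiating along \eqref{Sys}, the diffusive part of the dissipation is coercive and, by a logarithmic Sobolev inequality together with a Csisz\'ar--Kullback--Pinsker bound, controls $\norm{s-\overline s}_{\LO1}^2$ and $\norm{c-\overline c}_{\LO1}^2$ (hence, with the uniform $L^\infty$ bound, the corresponding covariances), while the reaction part is, thanks to the favourable sign of the $-k_fes$ sinks and the boundedness of $\phi_K'$, either dissipative or controlled by a harmless multiple of $\overline s+\overline c$. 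Coupling the cut-off-entropy differential inequality with the mass identity $\frac{d}{dt}(\overline s+\overline c)=-k_c\overline c$ (with a suitable weight $\overline s+\alpha\overline c$, $\alpha\in(k_r/(k_r+k_c),\,1)$, so that both reaction contributions acquire the right sign) and absorbing the inhomogeneity errors --- which costs an arbitrarily small loss $\eps$ and a short transient --- yields a closed Gronwall inequality, hence $\norm{s(t)}_{\LO1}+\norm{c(t)}_{\LO1}\le Ce^{-\gamma t}$ for an explicit $\gamma>0$ (the kinetic spectral gap above, intersected with the diffusive Poincar\'e/log-Sobolev rates and shrunk by $\eps$).

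The $L^\infty$ statements then follow by a smoothing--interpolation scheme, and this is where the factor $1/(n(1+\eta))$ enters. Since the solution is uniformly bounded, interpolation gives $\norm{s(t)}_{\LO q}\le\norm{s(t)}_{\LO1}^{1/q}\norm{s(t)}_{\LO\infty}^{1-1/q}\le Ce^{-\gamma t/q}$ for every $q\ge1$, and likewise for $c$; writing $s$ through Duhamel's formula for $\p_t-d_s\Delta$ with Neumann conditions, using $\norm{e^{\tau d_s\Delta}}_{\LO q\to\LO\infty}\lesssim\tau^{-n/(2q)}$ --- integrable near $\tau=0$ exactly when $q>n/2$ --- and estimating the quadratic reaction term in $\LO q$ by $Ce^{-\gamma t/q}$, one propagates the rate $\gamma/q$ to $\LO\infty$; choosing $q=\frac n2(1+\eta)$ produces the exponent $\frac{2\gamma}{n(1+\eta)}$ for $\norm{s(t)}_{\LO\infty}+\norm{c(t)}_{\LO\infty}$. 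For $e$ one decomposes $e-\einf=e-M_0=(e-\overline e)-\overline c$: here $\overline c$ already decays at rate $\gamma$, while an $\LO2$-energy estimate for $e-\overline e$ (Poincar\'e on the diffusion, with forcing $\norm{-k_fes+(k_r+k_c)c}_{\LO2}\lesssim\norm{s}_{\LO2}+\norm{c}_{\LO2}\lesssim e^{-\gamma t/2}$, the square root of the $\LO1$ rate) gives $\norm{e(t)-M_0}_{\LO1}\lesssim e^{-\gamma t/2}$; running the same machine with $\gamma$ replaced by $\gamma/2$ yields $\norm{e(t)-\einf}_{\LO\infty}\le\mathcal C_\eta e^{-\gamma t/(n(1+\eta))}$, which is the extra factor $\tfrac12$ in the exponent. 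Finally, for $p$ one uses $p-\pinf=p-M_1=(p-\overline p)-(\overline s+\overline c)$: the average part decays at rate $\gamma$, and $\norm{p-\overline p}_{\LO2}$ satisfies, via Duhamel for $\p_t-d_p\Delta$ and Poincar\'e, a linear inequality with a diffusive rate proportional to $d_p/C_P$ and forcing $k_c\norm{c}_{\LO2}$; the resulting decay rate is the minimum of a $d_p/C_P$-type rate (sacrificing $(1-\eps)$ to absorb cross terms) and a $\gamma$-type rate, with a polynomial-in-$t$ correction precisely when the two coincide, and a last interpolation against the uniform bound --- where the hypothesis $n(1+\eta)\ge4$ is used to make the relevant exponent admissible --- gives \eqref{eq:main_convergence_II} with the stated rates and the $\delta_{x,y}$-prefactor.

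The principal obstacle is step~(i)--(ii): because the natural reference equilibrium has $\sinf=\cinf=0$, the usual relative entropy is unavailable and one cannot simply invoke a standard entropy--entropy-dissipation inequality. The delicate points are to design the truncated entropy $\phi_K$ --- possibly letting $K$ depend on the data or evolve in time --- so that (a) near the degenerate equilibrium it is comparable to an $L^2$-type distance, (b) its dissipation still controls the spatial inhomogeneity after the truncation has damaged the log-Sobolev structure, and (c) the ``wrong-sign'' production terms $k_r\phi_K'(s)c$ and $k_f\phi_K'(c)es$ can be absorbed; the absorption in (c) is exactly what forces in the quasi-steady-state mechanism (the $2\times2$ spectral gap) and the lower bound $\overline e\ge M_0/2$, and it is the coupling of the cut-off partial entropy with the linear mass dissipation $\frac{d}{dt}(\overline s+\overline c)=-k_c\overline c$ that finally closes the argument and pins down the explicit $\gamma$.
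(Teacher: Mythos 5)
Your overall architecture coincides with the paper's: a cut-off (truncated) partial entropy for the components whose equilibrium vanishes, coupled to a decreasing weighted mass $\overline{s}+\alpha\overline{c}$ (the paper's weight $\tfrac{2k_r}{2k_r+k_c}$ lies in your admissible interval $(k_r/(k_r+k_c),1)$), followed by the $L^1\to L^p\to L^\infty$ bootstrap via interpolation, Duhamel and the $L^q\to L^\infty$ heat-semigroup bound with $q=\tfrac n2(1+\eta)$, and the Poincar\'e $L^2$ energy estimate for $p$. The only genuinely different organisational choices are that you treat $e$ separately (via $e-M_0=(e-\overline e)-\overline c$ and a Poincar\'e estimate) where the paper includes $h(e|\einf)$ inside the entropy functional and uses Csisz\'ar--Kullback--Pinsker, and that you phrase the kinetic part through the spectral gap of the averaged $2\times2$ system; both are workable and yield the same $e^{-\gamma t/2}$ rate in $L^1$ for $e-\einf$, hence the halved exponent in \eqref{eq:main_convergence_I}.

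The gap is in what you yourself flag as steps (i)--(ii): the closed differential inequality for the truncated entropy plus mass is asserted, not established, and this is where essentially all of the paper's work lies. Concretely, two ingredients are missing. First, the claim that the wrong-sign production terms $k_r\phi_K'(s)c$ and $k_f\phi_K'(c)es$ are ``either dissipative or controlled by a harmless multiple of $\overline s+\overline c$'' is not verifiable for an arbitrary truncation: it requires a compatibility condition tying the two cut-off levels to the reversible sub-reaction, namely $\eps_c/(M_0\eps_s)=k_f/k_r$ in the paper's notation, and then a case analysis over the four regions where $c$ and $s$ are separately above or below their thresholds; on each region the logarithms recombine into genuine relative-entropy densities plus lower-order terms with controllable sign, and without the compatibility condition the recombination fails. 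Second, your statement that the diffusive dissipation of the truncated entropy still controls the spatial inhomogeneity needs proof: after truncation one only obtains the Fisher information of $\max(s,\eps_s)$, and one must show that $-\int_{\{s\ge\eps_s\}}\log(s/\eps_s)\Delta s\,dx\ge \clsi\int_{\{s\ge\eps_s\}}h(s|\overline{s_{\eps_s}})\,dx$ (the paper's Lemma \ref{lem:truncated_log_sobolev}), which involves justifying the integration by parts for the non-smooth function $\max(\log(s/\eps_s),0)$ and an approximation argument for non-strictly-positive $s$. Until these two points are carried out, the explicit $\gamma$ and the Gronwall closure remain unproved.
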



\begin{theorem}\label{thm2}
	Let $\Omega\subset\R^n$ be a bounded, open domain with $C^{2+\zeta}, \zeta>0$ boundary, $\p\Omega$. Assume that $d_s$ and $d_p$ are strictly positive constants while $d_e=d_c=0$. Assume in addition that  the initial data $e_0(x)$, $s_0(x)$, $c_0(x)$ and $p_0(x)$ are all bounded non-negative functions and that there exists some $\beta>0$ such that
	\begin{equation}\label{lower_bounded_e0+c0}
		e_0(x) + c_0(x) \geq \beta \quad\text{a.e. } x\in\Omega.
	\end{equation}
	Then there exists a unique non-negative bounded strong solution to \eqref{Sys}. Moreover, there exists an explicit constant $\gamma>0$ such that for any $\eta>0$ there exists an explicit constant $\mathcal{C}_{\eta}$, depending only on geometric properties and the initial data which blows up as $\eta$ goes to zero, such that
	\begin{equation*}
		\begin{gathered}
			\|s(t)\|_{\LO{\infty}} \leq \mathcal{C}_{\eta}e^{-\frac{2\gamma}{n\pa{1+\eta}}t},\\
			\|e(x,t)-\einf(x)\|_{\LO{\infty}} + \|c(t)\|_{\LO{\infty}}  \leq \mathcal{C}_{\eta}\pa{1+t^{\delta_{\scaleto{k_r+k_c,\frac{2\gamma} {n\pa{1+\eta}}\mathstrut}{10pt}}}}e^{-\min\pa{k_r+k_c,\frac{2\gamma}{n\pa{1+\eta}}}t}.
		\end{gathered}
	\end{equation*}
In addition for any $\eps>0$ and $\eta>0$ with $n\pa{1+\eta}\geq 4$ there exists an explicit constant $\mathcal{C}_{\eta,\eps}$, depending only on geometric properties and the initial data which blows up as $\eta$ or $\eps$ go to zero, such that
\begin{equation}\nonumber
	\begin{gathered}
		\norm{p(t)-p_\infty}_{\LO{\infty}} \leq \mathcal{C}_{\eta,\eps}\pa{1+ t^{\frac{4}{n\pa{1+\eta}}\delta_{\scaleto{\frac{2d_p}{C_P}\pa{1-\eps},\gamma}{10pt}\mathstrut}}}e^{-\min\pa{\frac{4d_p}{nC_P\pa{1+\eta}}\pa{1-\eps},\frac{2\gamma}{n\pa{1+\eta}}}t}
	\end{gathered}
\end{equation}
\end{theorem}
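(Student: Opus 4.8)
The plan is to read Theorem~\ref{thm2} as the degenerate limit of Theorem~\ref{thm:main}: since $d_e=d_c=0$, the equations for $e$ and $c$ are, for each fixed $x\in\O$, ordinary differential equations driven by the single genuinely parabolic equation for $s$. First I would settle well-posedness. Adding the $e$- and $c$-equations and using $d_c=0$ gives $\p_t(e+c)=0$, i.e. the pointwise conservation law \eqref{law3}; together with non-negativity — which follows from the quasi-positivity of the mass-action nonlinearities, each reaction term being non-negative on the corresponding coordinate hyperplane — this bounds $e$ and $c$ by $\norm{e_0+c_0}_{\LO\infty}$. The $s$-equation is then a linear-in-$s$ parabolic equation with an $\LO\infty$-bounded, quasi-positive right-hand side, so a contraction-mapping argument — solve the $(e,c)$-ODEs for a given $s$, then the parabolic $s$-equation for the resulting $(e,c)$ — produces a unique bounded non-negative strong solution, global by the a priori bounds.

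The core of the proof is the exponential decay of $s$. Here I would work with the mass-like quantity $m(t):=\int_\O\pa{s(x,t)+c(x,t)}\,dx$; integrating $\p_t(s+c)-d_s\Delta s=-k_cc$ over $\O$ and using the Neumann condition on $s$ yields the exact dissipation identity $m'(t)=-k_c\int_\O c(x,t)\,dx$. The assumption \eqref{lower_bounded_e0+c0} now enters crucially through \eqref{law3}: $e(x,t)=e_0(x)+c_0(x)-c(x,t)\ge\beta-c(x,t)$, so $e\ge\beta/2$ on $\{c<\beta/2\}$, while $\{c\ge\beta/2\}$ has measure $\le\tfrac2\beta\int_\O c$ with $c\le\norm{e_0+c_0}_{\LO\infty}$ there; splitting the integral accordingly gives $\int_\O e s\,dx\ge\tfrac\beta2\int_\O s\,dx-C\int_\O c\,dx$, hence $\tfrac{d}{dt}\int_\O s\,dx\le-\tfrac{k_f\beta}{2}\int_\O s\,dx+C'\int_\O c\,dx$. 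A suitable linear combination $V:=m+\mu\int_\O s\,dx$ — which is the concrete incarnation of the cut-off partial entropy in this degenerate setting, here taking the simple form of a weighted mass — then satisfies $V'\le-\gamma V$ for explicit $\mu,\gamma>0$, so that $\norm{s(t)}_{\LO1}+\norm{c(t)}_{\LO1}\lesssim e^{-\gamma t}$. To upgrade this to the stated $\LO\infty$ rate for $s$ I would use parabolic smoothing: writing $s(t)$ by Duhamel over $[t-1,t]$ with the Neumann heat semigroup $e^{\tau d_s\Delta}$ and the bounds $\norm{e^{\tau d_s\Delta}}_{\LO q\to\LO\infty}\lesssim\tau^{-n/(2q)}$, $\norm{e^{\tau d_s\Delta}}_{\LO\infty\to\LO\infty}\le1$, and interpolating $\norm{u}_{\LO q}\le\norm{u}_{\LO1}^{1/q}\norm{u}_{\LO\infty}^{1-1/q}$ against the uniform bound, with the choice $q=\tfrac n2(1+\eta)>n/2$, gives $\norm{s(t)}_{\LO\infty}\lesssim e^{-\gamma t/q}=\mathcal C_\eta e^{-\frac{2\gamma}{n(1+\eta)}t}$; note that only the $\LO1$ decay of $c$ (coming from $m$) and its uniform bound are used here, so there is no circularity.

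With $s$ decaying, the remaining bounds are Duhamel computations. For $c$, the pointwise ODE $\p_t c=k_f e s-(k_r+k_c)c$ with $0\le e\le\norm{e_0+c_0}_{\LO\infty}$ and $\norm{s(t)}_{\LO\infty}$ decaying exponentially gives, by variation of constants, $\norm{c(t)}_{\LO\infty}\lesssim\pa{1+t^{\delta}}e^{-\min\pa{k_r+k_c,\,2\gamma/(n(1+\eta))}t}$, where $\delta=1$ precisely when $k_r+k_c=\tfrac{2\gamma}{n(1+\eta)}$ and $\delta=0$ otherwise; and \eqref{law3} gives $e(x,t)-\einf(x)=-c(x,t)$, so $\norm{e(t)-\einf}_{\LO\infty}=\norm{c(t)}_{\LO\infty}$, as in the statement. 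Finally, for $p$ I would write $p(t)-\pinf$ via the Neumann heat semigroup acting on the source $k_cc$: the spatial average satisfies $\bar p(t)-M_1=-\bar s(t)-\bar c(t)$ by \eqref{law2} and hence decays, while the mean-zero part decays like $e^{-\frac{d_p}{C_P}(1-\eps)t}$ in $\LO2$ by the Poincar\'e inequality, the $(1-\eps)$ absorbing the cross term with the decaying source; one more smoothing-plus-interpolation step, now interpolating $\LO q$ between $\LO2$ and $\LO\infty$ with $q=\tfrac n2(1+\eta)\ge2$ — whence the hypothesis $n(1+\eta)\ge4$ — then produces the rate $\min\pa{\tfrac{4d_p}{nC_P(1+\eta)}(1-\eps),\,\tfrac{2\gamma}{n(1+\eta)}}$ together with the polynomial correction at the resonant values.

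The main obstacle is the degeneracy itself: with no diffusion in $e$ and $c$ there is no smoothing for these components, so their $\LO\infty$ convergence cannot be obtained from parabolic estimates and must instead be extracted from the ODE structure with $s$ as a decaying forcing term — which is harmless once $s$ is controlled. The genuinely delicate point is therefore the decay of $s$: because the reaction is irreversible the usual logarithmic entropy is not coercive toward the degenerate equilibrium $\sinf=\cinf=0$, and one must couple the mass-like dissipation $m'=-k_c\int_\O c$ with a cut-off partial entropy; what makes this close in the present setting is precisely the interplay of the pointwise conservation law \eqref{law3} with the lower bound \eqref{lower_bounded_e0+c0}, which keeps the enzyme density bounded below wherever $c$ is small and thus guarantees that $s$ is genuinely consumed. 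Everything downstream is Duhamel and interpolation bookkeeping, requiring care only at the resonant parameter values where the logarithmic-in-$t$ factors appear.
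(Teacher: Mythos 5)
Your proposal is correct, but the heart of it --- the exponential decay of $\norm{s(t)}_{\LO{1}}+\norm{c(t)}_{\LO{1}}$ --- is obtained by a genuinely different and more elementary route than the paper's. The paper runs the cut-off entropy method of \S\ref{sec:entropy}: it differentiates $\E_{\eps_c,\eps_s,k}=\H_{\eps_c,\eps_s}+k\M$ along the flow, controls the resulting terms on the four subdomains $\O_1,\dots,\O_4$ using the truncated log-Sobolev inequality (Lemma \ref{lem:truncated_log_sobolev}), and closes the loop with the functional inequality $\E_{\eps_c,\eps_s,k}\lesssim\int_{\O}\pa{\d_{\eps_c,\eps_s}+\m}dx$, arriving at the rate \eqref{eq:choice_for_lambda}, which depends on $d_s$, $\clsi$, $\beta$ and the conserved masses. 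You instead observe that in the degenerate case the purely linear functional $V=\int_{\O}\pa{c+(1+\mu)s}dx$ already works: $\frac{d}{dt}\int_{\O}(s+c)\,dx=-k_c\int_{\O}c\,dx$, while the pointwise law \eqref{law3} and \eqref{lower_bounded_e0+c0} give $e\geq\beta/2$ on $\br{c<\beta/2}$, so that by Chebyshev $\int_{\O}es\,dx\geq\frac{\beta}{2}\int_{\O}s\,dx-\su\int_{\O}c\,dx$ and hence $V'\leq-\gamma V$ for a small explicit $\mu$. This bypasses the log-Sobolev inequality and the entire entropy construction; the price is that your rate $\gamma$ depends on the uniform-in-time bound $\su$ of $\norm{s(t)}_{\LO{\infty}}$ --- note that on $\br{c\geq\beta/2}$ the sup bound you actually need is on $s$, not on $c$ as written, and unlike the bounds on $e$ and $c$ it does not follow from \eqref{law3} but from the $L^q$-bootstrap in the proof of Theorem \ref{thm:existence_result_degenerate}; it is nonetheless available and explicit, so the argument closes. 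Everything downstream --- the Duhamel/smoothing/interpolation upgrade of $s$ to $\LO{\infty}$ with exponent $\frac{2\gamma}{n(1+\eta)}$, the variation-of-constants treatment of the non-diffusing $c$, the identity $e-\einf=-c$, and the Poincar\'e-based argument for $p$ --- coincides with the paper's proof (lemmas \ref{lem:heat_equation_regularisation} and \ref{lem:heat_equation_avergae_convergence}). Your approach buys simplicity and a rate independent of $d_s$; the paper's buys a rate expressed purely in terms of conserved quantities and geometric constants, and a framework that also covers the fully diffusive case, where no pointwise conservation law is available to play the role of \eqref{law3}.
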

\begin{remark}\label{rem:strong_solution}
	Our notion of \textit{strong solutions} to \eqref{Sys} {is as follows: For any $p\in[1,\infty)$, any component of the solution belongs to $C([0,\infty);L^p(\Omega))$ and is absolutely continuous  on $(0,\infty)$ with respect to $L^p(\Omega)$. Moreover, the time derivatives, and the spatial derivatives up to second order of any concentration, which is diffusing, are in $L^p\pa{(\tau,T);L^p\pa{\O}}$ for any $T>\tau>0$,} and the equations {and boundary conditions are satisfied a.e.\ in $\O\times (0,T)$ and a.e.\ in $\partial \O\times (0,T)$ respectively, for any $T>0$.}  
\end{remark}

\begin{remark}\label{rem:explicit_gamma}
	During our proofs we will be able to provide an explicit form to $\gamma$ in each of the theorems. We will show that one can choose
	\begin{equation}\label{eq:choice_for_gamma}
		\begin{aligned}
			\gamma =\min&\left(\frac{\pa{d_e \clsi -6\pa{\frac{\pa{k_c+k_r}}{M_0}+k_f}\max\pa{\eps_c,\eps_s}} }{\pa{1+\pa{\log\pa{1+\frac{M_1}{\eps_s}}+\frac{k\pa{2k_r+k_c}}{2k_r}}\frac{16\pa{\eps_s+M_1}}{\pa{1-\log(2)}M_0}}},\right.\\
			&\quad\frac{\frac{kk_c}{2}-k_c-k_r-2k_f\eps_s}{\pa{1+k+\log\pa{1+\frac{M_0}{\eps_c}}+\pa{\log\pa{1+\frac{M_1}{\eps_s}}+\frac{k\pa{2k_r+k_c}}{2k_r}}\pa{\frac{2k_r}{k_fM_0}+\frac{16\pa{\eps_s+M_1}}{\pa{1-\log(2)}M_0}}}},\\
			&\quad \left.\frac{ d_c  d_s \clsi}{d_s+d_c\pa{1+\pa{\log\pa{1+\frac{M_1}{\eps_s}}+\frac{k\pa{2k_r+k_c}}{2k_r}}}}\right)
		\end{aligned}
	\end{equation} 
	with 
	\begin{equation}\label{eq:possible_choices_full_diffusion}
		\begin{gathered}
			\eps_c= \frac{\clsi M_0}{12\pa{k_c+k_r+M_0k_f}\max\pa{1, \frac{k_r}{M_0k_f}}},\quad 
			\eps_s=\frac{k_r}{M_0 k_f}\eps_c,\quad 
			k= \frac{4\pa{k_c+k_r+k_f\eps_s}}{k_c}.
		\end{gathered}
	\end{equation}
	for Theorem \ref{thm:main}, where $\clsi$ is the log-Sobolev constant that is associated to the domain $\O$, and 
	\begin{equation}\label{eq:choice_for_lambda}
		\begin{aligned}
			\gamma =&\min\left(\frac{\frac{kk_c}{2}-k_f\eps_s}{\pa{1+k+\log\pa{1+\frac{k_r}{k_f\eps_s}}+\pa{\log\pa{1+\frac{M_1}{\eps_s}}+\frac{k\pa{2k_r+k_c}}{2k_r}}\pa{\frac{2k_r}{k_f\beta}+\frac{16\pa{\eps_s+M_1}}{\pa{1-\log(2)}\beta}}}}\right.\\
			&\qquad\left.\frac{ d_s\clsi}{1+\pa{\log\pa{1+\frac{M_1}{\eps_s}}+\frac{k\pa{2k_r+k_c}}{2k_r}}}\right),
		\end{aligned}
	\end{equation} 
	with 
	\begin{equation}\label{eq:possible_choices_partial_diffusion}
	\begin{gathered}
		\eps_s\in \pa{0,\infty},\quad
		k= \frac{4k_f \eps_s}{k_c}.
	\end{gathered}
\end{equation}
	These choices choices are clearly far from optimal, though explicit. Optimal convergence rate is a subtle issue and remains open in most chemical reaction-diffusion systems. Some discussion about this issue in the case where all diffusion coefficients are strictly positive will be given in \S \ref{subsec:optimal_rate}.
\end{remark}
\begin{remark}\label{remark:convergence_without_lower_bound}
	In our analysis, the lower bound condition \eqref{lower_bounded_e0+c0} is essential to be able to obtain the explicit exponential convergence to equilibrium. This condition is easily satisfied when we require that the initial enzyme concentration $e_0$ is present everywhere in $\Omega$. In the case where \eqref{lower_bounded_e0+c0} does not hold numerical solutions suggest that convergence to equilibrium can still be expected, as is indicated in Fig. \ref{fig1} in \S\ref{subsec:convergence_without_lower_bound}. A rigorous proof, however, remains an open problem.
\end{remark}
A common method one employs to investigate the \textit{quantitative} long time behaviour of chemical reaction-diffusion systems (and many other physically, chemically and biologically relevant equations) is the so-called \textit{entropy method}: By considering the connection between a natural Lypunov functional of the system, the \textit{entropy}, and its dissipation, usually via a functional inequality, one recovers an explicit rate of convergence to equilibrium. This convergence rate can be boosted up to stronger norms in many situations, at least as long as the system has some smoothing effects. 

This method has been extremely successful in dealing with reaction-diffusion systems which model \textit{reversible} reactions, see e.g. \cite{desvillettes2006exponential,mielke2015uniform,desvillettes2017trend,fellner2018convergence} and references therein. A fundamental property of these systems which help facilitate the entropy method is the existence of a \textit{strictly positive} equilibrium, which allows the consideration of natural entropies such as the Boltzmann entropy (to be defined shortly). This property, however, is not necessarily true in most open or irreversible reaction systems, precluding the consideration of the aforementioned entropies and, in our opinion, resulting in a relatively sparse study of the quantitative large time behaviour of such systems. The current work serves, to our knowledge, as the first study in this direction for the well known enzyme reaction \eqref{ER}, and we believe that the method introduced herein will be applicable to many other open and irreversible systems,

Let us delve deeper into the entropies one encounters in the study of chemical reaction-diffusion systems, the issues of a zero equilibrium and how we propose to overcome it in this work.

A natural entropy to consider in many chemical reaction-diffusion systems is generated by the \textit{Boltzmann entropy function}
\begin{equation}\label{eq:boltzmann_entropy_function}
	\hh(x)=x\log x -x +1,\qquad x\geq 0.
\end{equation}
In particular, one uses this entropy function to define a \textit{relative entropy} functional that measures the ``entropic distance'' between a solution to an equation, $f(x)$, and its equilibrium $f_\infty$:
\begin{equation}\label{relative_entropy}
	\mathcal{E}(f|f_\infty) = \int_{\O}h\pa{f(x)|f_\infty}dx
\end{equation}
where
\begin{equation}\label{eq:boltzmann_relative_entropy_function}
	h\pa{x|y}=x\log \pa{\frac{x}{y}} -x +y=y\hh\pa{\frac{x}{y}},\quad \quad x,y>0.
\end{equation}
Both $\mathcal{E}$ and $h$ are naturally not well defined when $f_\infty=0$ which is exactly the equilibrium we have (or suspect) for our substrate and complex concentrations $s$ and $c$. Similar issues occur when the equilibrium is spatially inhomogeneous, i.e. $f_\infty = f_{\infty}(x)$, with 
$$\abs{\br{x\in\O\;|\;f_{\infty}(x)=0}}> 0.$$ 
This situation can indeed occur, as was shown in \cite{jabin2011selection} where one finds that $f_\infty$ can be a sum of Dirac masses. 

\medskip
The first key idea and strategy that will guide us in showing our main results is to \textit{modify} our Boltzmann entropy by defining a new relative entropy function that is ``cut'' when the concentration becomes ``small enough'':
\begin{equation}\label{eq:cut_relative_entropy_function}
	h_\eps\pa{x|\eps}=\begin{cases}
		x\log \pa{\frac{x}{\eps}} -x +\eps&  x\geq \eps,\\
		0 & x<\eps.
	\end{cases}
\end{equation}
With this ``cut-off'' entropy, we will consider a \textit{partial} entropy functional
\begin{equation*}
	\H_{\eps_c,\eps_s}(e,s,c) = \int_{\O}h(e(x)|\einf)dx + \int_{\O}h_{\eps_c}(c(x)|\eps_c)dx + \int_{\O}h_{\eps_s}(s(x)|\eps_s)dx,
\end{equation*}
where $\eps_c, \eps_s$ are to be chosen in a meaningful way. It is not clear if $\H_{\eps_c,\eps_s}$ is decreasing along the evolution of the system. Moreover, as we expect $c$ and $s$ to converge to zero, we expect $\H_{\eps_c,\eps_s}(e,s,c)$ to eventually become dependent only on $e$ and $\einf$ for any fixed $\eps_c$ and $\eps_{s}$ - the smallness of $\H_{\eps_c,\eps_s}(e,s,c)$ can only give us information on the convergence of $e$ to $\einf$.

This is the point where we introduce our second key idea: Combine this partial entropy functional with a sum of masses from the substrate and complex which decreases and, together with the partial entropy, will drive these concentration to zero\footnote{One can think of this as a hypocoercivity idea.}. This sum of masses will be of the form
\begin{equation*}
	\M(s,c) = \int_{\O}(c(x) + \eta s(x))dx
\end{equation*}
with a suitable choice of $\eta>0$. The drive of the concentration towards zero by $\M(s,c)$ is expressed by the fact that we will find an explicit \textit{mass density}, $\m(x)$, such that
\begin{equation}\label{mass_diss}
	\frac{d}{dt}\M(s,c)= -\int_{\O}\m(x)dx \leq 0.
\end{equation}
With $\H_{\eps_c,\eps_s}(e,s,c)$ and $\M(s,c)$ at hand we will define our \textit{total} entropy functional to be
\begin{equation*}
	\E_{\eps_c,\eps_s,k}(e,s,c)= \H_{\eps_c,\eps_s}(e,s,c) + k\M(s,c)
\end{equation*}
for an appropriately chosen constant $k>0$. 

Showing the exponential convergence to equilibrium of $\E_{\eps_c,\eps_s,k}$ will take lead from ideas that govern the entropy method. Indeed, while $\H_{\eps_c,\eps_s}(e,s,c)$ might not be decreasing along the flow of \eqref{Sys}, we will show that we could find a \textit{partial entropic density}, $\d_{\eps_c,\eps_{s}}(x)$, such that
\begin{equation}\label{partial_entropy_diss}
	\frac{d}{dt}\E_{\eps_c,\eps_s,k}(e,s,c) \lesssim\begin{cases}
		- \int_{\O}\big(\d_{\eps_c,\eps_s}(x) +h(e(x)|\overline{e}) + \m(x)\big)dx & d_e,d_s,d_c>0,\\
		- \int_{\O}\big(\d_{\eps_c,\eps_s}(x)  + \m(x)\big)dx & d_e=d_c=0,
	\end{cases}
\end{equation}
where $\overline{e} = \int_{\O}e(x)dx$, and where the appropriate constant may depend on $\eps_c$, $\eps_s$ and $k$. \eqref{partial_entropy_diss}, together with the structure of $\d_{\eps_c,\eps_s}$ and $\m$, will imply that for suitable choices for  $\eps_c$, $\eps_s$ and $k$ we will get that
\begin{equation*}
	\begin{cases}
		- \int_{\O}\big(\d_{\eps_c,\eps_s}(x) +h(e(x)|\overline{e}) + \m(x)\big)dx & d_e,d_s,d_c>0,\\
		- \int_{\O}\big(\d_{\eps_c,\eps_s}(x)  + \m(x)\big)dx & d_e=d_c=0,
	\end{cases} \lesssim -\E_{\eps_c,\eps_s,k}(e,s,c),
\end{equation*}
from which the exponential decay of $\E_{\eps_c,\eps_s,k}$ follows. It is worth mentioning that the above inequality is a \textit{purely functional inequality} that may be of interest in other related problems. 

With the decay of the entropy at hand, the boundedness of the solution to \eqref{Sys} will imply the desired $L^\infty$ convergence in the regularising case of full diffusion fairly easily. As could be expected, the case where $d_e=d_c=0$ is more complicated as it precludes regularisation for these concentration. However, as $s$ still enjoys regularisation and its convergence can be boosted to an $L^\infty$ one, the ODE nature of the equations for $e$ and $c$ together with the behaviour of $s$ will give us the desired $L^\infty$ estimation. As predicted, the explicit convergence of $p$ to its equilibrium will follow immediately from our conservation law \eqref{law2} and the long time behaviour of $c$.

We would like to mention that the idea of using a ``truncated entropy'' functional has been used before, see for instance \cite{GV10, BRZ2020}, yet to our knowledge this is the first time we it has been used to attain \textit{quantiative} convergence rates to equilibrium. 

\subsection{The structure of the work}\label{subsec:structure} in \S\ref{sec:entropy} we will define our entropy and will employ the ideas of the entropy method to achieve an exponential convergence to equilibrium in both our cases under the assumption of the existence of strong solutions. In \S \ref{sec:unfiorm} we shall use the convergence of this entropy and regularising properties of our system to conclude theorems \ref{thm:main} and \ref{thm2}. We will conclude with some final thoughts in \S\ref{sec:final} which will be followed by an Appendix where we will consider a few technical lemmas and theorems that have been used along our work. 

\section{The Modified Entropy}\label{sec:entropy}
The goal of this section is to define our entropy, which will comprise of ``cut off'' Boltzmann entropy and a decreasing mass-like term, and to explore its evolution. We remind the reader that we assume throughout the presented work that $\abs{\O}=1$. Simple modification can be made to accommodate the general case.  
\begin{definition}
	For a given non-negative functions $e(x),c(x)$ and $s(x)$, strictly positive coefficients $k_r,k_f$ and $k_c$, strictly positive constants $\eps_{s}$ and $k$, and strictly positive functions $\eps_c(x)$ and $\einf(x)$,  we define the partial mass function, $\M$, the Boltzmann entropy-like function, $\H_{\eps_c,\eps_s}$, and the entropy functional $\E_{\eps_c,\eps_s,k}$ as
	\begin{equation}\label{eq: mass_like}
			\M(c,s): = \int_{\O}\pa{c(x) + \frac {1}{2}\pa{\frac{2k_r+k_c}{k_r}}s(x)}dx,
	\end{equation}
	 \begin{equation}\label{eq:boltzmann_like_entropy}
	 	\begin{split}
	 		\H_{\eps_c,\eps_s}\pa{e,c,s}:=& \int_{\O} h(e(x)|e_\infty(x))dx +\int_{\O} h_{\eps_c(x)}(c(x)|\eps_c(x))dx +\int_{\O} h_{\eps_s}(s(x)|\eps_s)dx,
	 	\end{split}
	 \end{equation}
 and
 \begin{equation}\label{eq:def_of_entropy}
 	\E_{\eps_c,\eps_s,k}\pa{e,c,s}:=\H_{\eps_c,\eps_s}\pa{e,c,s}+k\M(c,s).
 \end{equation}
\end{definition}
The subscripts of $\H_{\eps_c,\eps_s}$, $\eps_c$ and $\eps_s$, correspond to the choice of entropic cut off we'll perform. Their choices will be motivated from the \textit{reversible} chemical reaction that the substrate and intermediate compound undergo. The additional parameter for the entropy $\E_{\eps_c,\eps_s,k}$, $k$, corresponds to the ratio of the mass like element that we need to add to drive the convergence  to equilibrium once we've reached our entropic threshold $\eps_c$ and $\eps_s$. 

The main theorem we will show in this section is the following:
\begin{theorem}\label{thm:entropic_convergence}
	Let $e(x,t),c(x,t)$ and $s(x,t)$ be non-negative bounded strong solutions to the irreversible enzyme system \eqref{Sys} with initial data $e_0(x),c_0(x)$ and $s_0(x)$. Then
	\begin{enumerate}[(i)]
		\item \label{item:full_diffusion} if $d_e,d_s$ and $d_c$ are strictly positive constants, and $\eps_c$ and $\eps_s$ are constants such that
		\begin{equation}\label{eq:eps_conection}
			\begin{gathered}
				\frac{\eps_c}{M_0\eps_s}=\frac{k_f}{k_r},
			\end{gathered}
		\end{equation} 
	then for any $\gamma$ such that
		\begin{equation}\label{eq:conditions_gamma_full_diffusion}
		\begin{aligned}
			\gamma \leq &\min\left(\frac{\pa{d_e\clsi-6\pa{\frac{\pa{k_c+k_r}}{M_0}+k_f}\max\pa{\eps_c,\eps_s}} }{\pa{1+\pa{\log\pa{1+\frac{M_1}{\eps_s}}+\frac{k\pa{2k_r+k_c}}{2k_r}}\frac{16\pa{\eps_s+M_1}}{\pa{1-\log(2)}M_0}}},\right.\\
			&\qquad \frac{\frac{kk_c}{2}-k_c-k_r-2k_f\eps_s}{\pa{1+k+\log\pa{1+\frac{M_0}{\eps_s}}+\pa{\log\pa{1+\frac{M_1}{\eps_c}}+\frac{k\pa{2k_r+k_c}}{2k_r}}\pa{\frac{2k_r}{k_fM_0}+\frac{16\pa{\eps_s+M_1}}{\pa{1-\log(2)}M_0}}}},\\
			&\qquad\left.\frac{ d_c  d_s\clsi}{d_s+d_c\pa{1+\log\pa{1+\frac{M_1}{\eps_s}}+\frac{k\pa{2k_r+k_c}}{2k_r}}}\right),
		\end{aligned}
	\end{equation} 
	we have that
	\begin{equation}\label{eq:entropic_inequality}
		\frac{d}{dt}\E_{\eps_c,\eps_s,k}(e(t),c(t),s(t)) + \gamma \E_{\eps_c,\eps_s,k}(e(t),c(t),s(t)) \leq 0,
	\end{equation}
	and consequently 
	\begin{equation}\label{eq:entropic_convergence}
		\E_{\eps_c,\eps_s,k}(e(t),c(t),s(t))  \leq \E_{\eps_c,\eps_s,k}(e_0,c_0,s_0)e^{-\gamma t}. 
	\end{equation}
	\item \label{item:partial_diffusion} if $d_e=d_c=0$ and $d_s>0$, and if there exists $\beta>0$ such that 
	\begin{equation}\label{eq:lower_bound_for_einf}
		\einf(x)=e_0(x)+c_0(x)\geq \beta, \quad\text{a.e. } x\in\Omega,
	\end{equation}
	then for any strictly positive functions $\eps_c(x)$ and constant $\eps_s$ such that
\begin{equation}\label{eq:eps_conection_partial_diffusion}
	\begin{gathered}
		\frac{\eps_c(x)}{\einf(x)}=\frac{k_f}{k_r}\eps_s,
	\end{gathered}
\end{equation} 
	and any $\gamma$ such that
	\begin{equation}\label{eq:conditions_gamma_partial_diffusion}
		\begin{aligned}
			\gamma \leq &\min\left(\frac{\frac{kk_c}{2}-k_f\eps_s}{\pa{1+k+\log\pa{1+\frac{k_r}{k_f\eps_s}}+\pa{\log\pa{1+\frac{M_1}{\eps_s}}+\frac{k\pa{2k_r+k_c}}{2k_r}}\pa{\frac{2k_r}{k_f\beta}+\frac{16\pa{\eps_s+M_1}}{\pa{1-\log(2)}\beta}}}}\right.\\
			&\qquad\left.\frac{ d_s\clsi}{1+\log\pa{1+\frac{M_1}{\eps_s}}+\frac{k\pa{2k_r+k_c}}{2k_r}}\right),
		\end{aligned}
	\end{equation} 
	we have that \eqref{eq:entropic_inequality}  and \eqref{eq:entropic_convergence} are valid.
	\end{enumerate}
\end{theorem}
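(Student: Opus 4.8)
Since \eqref{eq:entropic_convergence} is an immediate consequence of \eqref{eq:entropic_inequality} via Gr\"onwall's lemma, the whole task is to prove $\tfrac{d}{dt}\E_{\eps_c,\eps_s,k}+\gamma\,\E_{\eps_c,\eps_s,k}\le 0$, and the plan is to differentiate $\E_{\eps_c,\eps_s,k}=\H_{\eps_c,\eps_s}+k\M$ along the flow of \eqref{Sys} and sort the outcome into dissipative terms and errors. Testing the $c$- and $s$-equations against the constants $1$ and $\tfrac12\tfrac{2k_r+k_c}{k_r}$ and dropping the Laplacians by the no-flux conditions, the $k_fes$- and $k_rc$-terms combine (thanks precisely to the weight $\tfrac12\tfrac{2k_r+k_c}{k_r}$) to give $\tfrac{d}{dt}\M(c,s)=-\int_\O\m\,dx$ with $\m:=\tfrac{k_c}{2}\big(\tfrac{k_f}{k_r}es+c\big)\ge0$. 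For $\H_{\eps_c,\eps_s}$, since $x\mapsto h(x|e_\infty)$ and $x\mapsto h_\eps(x|\eps)$ are $C^1$ with derivatives $\log(x/e_\infty)$ and $(\log(x/\eps))_+$ (the latter globally Lipschitz), and the strong solutions carry the space--time Sobolev regularity of Remark \ref{rem:strong_solution}, the chain rule gives $\tfrac{d}{dt}\H_{\eps_c,\eps_s}=\int_\O(\phi_e\,\p_t e+\phi_c\,\p_t c+\phi_s\,\p_t s)\,dx$ with $\phi_e=\log(e/e_\infty)$, $\phi_c=(\log(c/\eps_c))_+$, $\phi_s=(\log(s/\eps_s))_+$; substituting \eqref{Sys}, the diffusion parts integrate by parts to the nonpositive Fisher-type sum
\begin{equation*}
  -D:=-d_e\!\int_\O\frac{|\nabla e|^2}{e}\,dx-d_s\!\int_{\{s>\eps_s\}}\!\frac{|\nabla s|^2}{s}\,dx-d_c\!\int_{\{c>\eps_c\}}\!\frac{|\nabla c|^2}{c}\,dx\le 0
\end{equation*}
(in part \ref{item:partial_diffusion}, $d_e=d_c=0$, so the possible $x$-dependence of $e_\infty,\eps_c$ is harmless; in part \ref{item:full_diffusion}, $\eps_c$ is constant, so no $\nabla\eps_c$ term survives).

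The reaction terms are the core. Writing $k_fes-(k_r+k_c)c=(k_fes-k_rc)-k_cc$ and regrouping, the reaction contribution to $\tfrac{d}{dt}\H_{\eps_c,\eps_s}$ is
\begin{equation*}
  R=\int_\O(k_fes-k_rc)\big(\phi_c-\phi_e-\phi_s\big)\,dx+k_c\!\int_\O c\,(\phi_e-\phi_c)\,dx .
\end{equation*}
On the set $\{c>\eps_c,\ s>\eps_s\}$ the calibrations \eqref{eq:eps_conection} (resp.\ \eqref{eq:eps_conection_partial_diffusion}) amount exactly to the detailed-balance relation $k_fe_\infty\eps_s=k_r\eps_c$ for the fictitious reversible reaction $E+S\rightleftharpoons C$ with ``equilibrium'' $(e_\infty,\eps_s,\eps_c)$, and they turn the first integrand into the entropy-production density $-(k_rc-k_fes)\log\tfrac{k_rc}{k_fes}\le0$; moreover, using $c\log(c/\eps_c)=h_{\eps_c}(c|\eps_c)+c-\eps_c$ on $\{c>\eps_c\}$, the second integral produces, besides a term $\le0$, the genuinely dissipative piece $-k_c\int_\O h_{\eps_c}(c|\eps_c)\,dx$ together with $k_c\int_\O c\log(e/e_\infty)\,dx$. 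What remains — the first integral over the three ``boundary-layer'' regions where one cut-off is inactive, plus the residual $k_c$-pieces — is the error, which I would control by elementary inequalities ($\log t\le t-1$, $u\log u\ge-\mathrm{e}^{-1}$, $(a-b)\log(a/b)\ge0$, and Young's inequality to trade logarithmic factors for gradients) in terms of a small multiple of $D$, a bounded multiple of $\int_\O\m\,dx$, and a small multiple of $\H_{\eps_c,\eps_s}$, with prefactors built from $\max(\eps_c,\eps_s)$, $k_f$, $\tfrac{k_c+k_r}{M_0}$ (or $\tfrac1\beta$ in part \ref{item:partial_diffusion}) and logarithms of the $M_i/\eps_j$ — precisely the quantities appearing in \eqref{eq:conditions_gamma_full_diffusion}, \eqref{eq:conditions_gamma_partial_diffusion}.

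It then remains to close the estimate with functional inequalities on $\O$ (recall $|\O|=1$). The logarithmic Sobolev inequality with constant $\clsi$, applied to $\sqrt e$, bounds $\int_\O h(e|\overline{e})\,dx$ by $\int_\O\tfrac{|\nabla e|^2}{e}\,dx$; combined with the additivity $\int_\O h(e|e_\infty)\,dx=\int_\O h(e|\overline{e})\,dx+h(\overline{e}|e_\infty)$ and the bound $h(\overline{e}|M_0)\le\overline{c}^{\,2}/M_0\le\overline c\le\M$ in part \ref{item:full_diffusion} (respectively, using the pointwise conservation law \eqref{law3}, the lower bound \eqref{eq:lower_bound_for_einf} and $c\le e_\infty\le\|e_0+c_0\|_{\LO{\infty}}$, the bound $\int_\O h(e|e_\infty)\,dx\le\beta^{-1}\!\int_\O c^2\,dx\lesssim\M$ in part \ref{item:partial_diffusion}), this dominates the entire $e$-contribution to $\gamma\H_{\eps_c,\eps_s}$ by $D$ and $\int_\O\m\,dx$. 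A cut-off Poincar\'e/log-Sobolev estimate — with constant $\ccpk$, to be established in the Appendix — bounding $\int_\O h_\eps(f|\eps)\,dx$ by $\int_{\{f>\eps\}}\tfrac{|\nabla f|^2}{f}\,dx$ plus $\overline f\,(\log(1+\overline f/\eps))_+$ does the same for the $s$-contribution (and, in part \ref{item:full_diffusion}, the $c$-contribution), the leftover means $\overline s,\overline c$ being swallowed by $k\int_\O\m\,dx$ once $k$ is chosen large — which is why the factor $k$ and the logarithms $\log(1+M_1/\eps_s)$, $\log(1+M_0/\eps_c)$ appear in the denominators of \eqref{eq:conditions_gamma_full_diffusion}, \eqref{eq:conditions_gamma_partial_diffusion}. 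Adding everything and taking $\gamma$ no larger than the stated bound yields \eqref{eq:entropic_inequality}, and Gr\"onwall gives \eqref{eq:entropic_convergence}.

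I expect the main obstacle to be the reaction error estimate of the second step, together with the sharp form of the cut-off functional inequality: near the thresholds the entropy-production positivity is lost and $\phi_e,\phi_s$ blow up where $e$ or $s$ are small, so one must show these contributions are genuinely absorbed by the truncated Fisher information and the mass dissipation, with constants uniform in time and free of $L^\infty$-bounds on the solution; getting these constants to line up with \eqref{eq:choice_for_gamma}--\eqref{eq:choice_for_lambda} is where the bulk of the work lies. By comparison, justifying the time differentiation of $\H_{\eps_c,\eps_s}$ and the ensuing integrations by parts for the merely $C^1$ truncated densities (and near $\{e=0\}$), either by mollifying $h_\eps$ or directly from the regularity in Remark \ref{rem:strong_solution}, is a minor technical point.
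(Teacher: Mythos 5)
Your overall strategy coincides with the paper's: differentiate $\E_{\eps_c,\eps_s,k}=\H_{\eps_c,\eps_s}+k\M$, absorb the diffusion terms through a truncated log-Sobolev inequality, use the calibration $k_f\einf\eps_s=k_r\eps_c$ to turn the reaction term on $\{c\ge\eps_c,\ s\ge\eps_s\}$ into a sign-definite entropy production, control the boundary-layer errors by elementary inequalities, and close with a functional inequality of the form $\E_{\eps_c,\eps_s,k}\lesssim\int_\O\big(\d_{\eps_c,\eps_s}+h(e|\overline e)+\m\big)dx$. The decomposition of the reaction term, the treatment of the diffusion term, and the bounds on $\int_\O h(e|\einf)dx$ and on the truncated entropies are all essentially the arguments the paper uses.

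There is, however, one genuine gap, and it sits at the point you dismiss most quickly: the claim that the leftover mean $\overline s$ is ``swallowed by $k\int_\O\m\,dx$ once $k$ is chosen large.'' The mass dissipation density is $\m=\frac{k_f}{k_r}es+c$; it controls $\int_\O es\,dx$ and $\int_\O c\,dx$ but \emph{not} $\int_\O s\,dx$, since $es$ can vanish where $e$ does while $s$ remains large. Because $\M$ itself contains the term $\frac12\frac{2k_r+k_c}{k_r}\int_\O s\,dx$, and $\log\pa{1+M_1/\eps_s}\int_\O s\,dx$ also arises from the $s$-entropy, the Gr\"onwall loop cannot be closed without an additional argument bounding $\int_\O s\,dx$ by the available dissipation. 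The paper does this by a three-way split: on $\{s\ge 8\,\overline{s_{\eps_s}}\}$ one has $s\le h(s|\overline{s_{\eps_s}})$, controlled by the truncated Fisher information; on $\{s<8\,\overline{s_{\eps_s}},\ e\le\einf/2\}$ one uses $1\le\frac{2}{(1-\log 2)\einf}h(e|\einf)$, so this piece is controlled by $\int_\O h(e|\overline e)dx+\int_\O\m\,dx$ in the full-diffusion case and by $\int_\O\m\,dx$ via the pointwise bound $\einf\ge\beta$ in the degenerate case; and on $\{s<8\,\overline{s_{\eps_s}},\ e>\einf/2\}$ one has $s\le\frac{2}{\einf}es\lesssim\m$. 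This is precisely where the constants $\frac{16(\eps_s+M_1)}{(1-\log 2)M_0}$ and $\frac{2k_r}{k_fM_0}$ (resp.\ with $\beta$ in place of $M_0$) in \eqref{eq:conditions_gamma_full_diffusion}--\eqref{eq:conditions_gamma_partial_diffusion} come from, and it is the place where hypothesis \eqref{eq:lower_bound_for_einf} is genuinely indispensable --- not, as you suggest, in bounding $\int_\O h(e|\einf)dx$, which in case (ii) is trivially at most $\int_\O c\,dx$ since $e\le\einf$ pointwise by \eqref{law3}. As written, your proposal leaves the estimate open at exactly this step.
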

\begin{remark}\label{rem:possible_choices}
	Possible choices for $\eps_c$, $\eps_s$ and $k$ in \eqref{item:full_diffusion} that give an explicit positive $\gamma$ that equals the expression in the right hand side of \eqref{eq:conditions_gamma_full_diffusion} are
	\begin{equation}\nonumber
		\begin{gathered}
			\eps_s= \frac{d_e\clsi M_0}{12\pa{k_c+k_r+M_0k_f}\max\pa{1, \frac{M_0k_f}{k_r}}},\\
			\eps_c= \frac{M_0 k_f}{k_r}\eps_s\\
			k= \frac{4\pa{k_c+k_r+2k_f\eps_s}}{k_c}.
		\end{gathered}
	\end{equation}
	Similarly, for \eqref{item:partial_diffusion} one can choose
		\begin{equation}\nonumber
		\begin{gathered}
			\eps_s\in \pa{0,\infty},\\
			\eps_c(x)= \frac{ k_f \eps_s}{k_r}\einf(x)=\frac{ k_f \eps_s}{k_r}\pa{e_0(x)+c_0(x)} \geq \frac{ k_f \eps_s}{k_r}\beta >0 \\
			k= \frac{4k_f \eps_s}{k_c}.
		\end{gathered}
	\end{equation}
\end{remark}
\begin{remark}\label{rem:eps_condition_equivalent}
	Condition \eqref{eq:eps_conection} gives us one ingredient in how one chooses the thresholds for our substrate and intermediate compound. Note that it is strongly related to the reversible chemical reaction in \eqref{ER}, as was eluded before. \\
	Looking at condition \eqref{eq:eps_conection_partial_diffusion}, on the other hand, one might wonder why $\eps_s$ is required to remain a constant while $\eps_c(x)$ is allowed to be changed to a function. The fact that some change is required is evident from the fact that our equilibrium state for $e$ is no longer constant. However, when one differentiates the entropy $\E_{\eps_c,\eps_s,k}$ with respect to time in the case where $d_e=d_c=0$ the only term that brings out a Laplacian, and as such requires additional integration by parts, is that which is induced from $h_{\eps_s}\pa{s(x)|\eps_{s}}$. Keeping $\eps_{s}$ constant is a vital simplification to the estimation of the evolution of this term (as will be shown shortly). 
\end{remark}
In order to prove Theorem \ref{thm:entropic_convergence} we will explore the dissipation properties of $\M$ and $\H_{\eps_c,\eps_s}$, starting with the simple mass-like term.
\begin{lemma}\label{lem:decay_of_mass}
	Let $c(x,t)$ and $s(x,t)$ be non-negative strong solutions to the irreversible enzyme system \eqref{Sys}. Then 
	\begin{equation}\label{eq:decay_of_mass}
		\frac{d}{dt}\M(c(t),s(t))=-\frac{k_fk_c}{2k_r}\int_{\O}e(x,t)s(x,t)dx - \frac{k_c}{2}\int_{\O}c(x,t)dx 
	\end{equation}
\end{lemma}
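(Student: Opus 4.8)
The plan is to differentiate $\M$ directly, substitute the evolution equations for $c$ and $s$, kill the diffusion terms with the boundary conditions, and collect the reaction contributions; the weight $\tfrac12\tfrac{2k_r+k_c}{k_r}$ on $s$ has been chosen precisely so that the two surviving coefficients come out non-positive.

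First I would use the regularity of strong solutions recorded in Remark \ref{rem:strong_solution} — namely that $c,s\in C([0,\infty);L^1(\O))$ are absolutely continuous in $t$ with time derivatives in $L^1((\tau,T);L^1(\O))$ for $T>\tau>0$ — to justify differentiation under the integral sign:
\[
\frac{d}{dt}\M(c(t),s(t)) = \int_{\O}\p_t c(x,t)\,dx + \frac12\pa{\frac{2k_r+k_c}{k_r}}\int_{\O}\p_t s(x,t)\,dx.
\]
Then I substitute the third and second equations of \eqref{Sys}. For the diffusion terms I integrate over $\O$ against the constant $1$: when $d_c>0$, the divergence theorem together with $d_c\p_\nu c=0$ on $\p\O$ gives $\int_\O d_c\Delta c\,dx=0$, and when $d_c=0$ that term is simply absent; likewise $\int_\O d_s\Delta s\,dx=0$ since $d_s>0$ and $d_s\p_\nu s=0$. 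Here the $C^{2+\zeta}$ regularity of $\p\O$ and the strong-solution regularity make the use of the divergence theorem legitimate. This yields
\[
\frac{d}{dt}\M(c,s) = \int_{\O}\pa{k_fes-(k_r+k_c)c}dx + \frac{2k_r+k_c}{2k_r}\int_{\O}\pa{-k_fes+k_rc}dx.
\]

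Finally I collect coefficients. The coefficient of $\int_\O e(x,t)s(x,t)\,dx$ is $k_f\pa{1-\tfrac{2k_r+k_c}{2k_r}}=-\tfrac{k_fk_c}{2k_r}$, and the coefficient of $\int_\O c(x,t)\,dx$ is $-(k_r+k_c)+\tfrac{2k_r+k_c}{2}=-\tfrac{k_c}{2}$, which is exactly \eqref{eq:decay_of_mass}. Note that the computation is identical in the fully diffusive case and in the degenerate case $d_e=d_c=0$, since in the latter the only change is that the (already vanishing) term $\int_\O d_c\Delta c\,dx$ disappears. There is no genuine obstacle in this lemma; the only points requiring a line of care are the justification of differentiation under the integral and of the divergence theorem for strong rather than classical solutions, both of which follow from Remark \ref{rem:strong_solution} and the smoothness of $\p\O$.
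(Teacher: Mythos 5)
Your proposal is correct and follows essentially the same route as the paper: differentiate $\M$ along the flow, substitute the equations for $c$ and $s$, eliminate the diffusion terms via integration by parts with the homogeneous Neumann conditions, and collect the reaction coefficients, which indeed give $-\tfrac{k_fk_c}{2k_r}$ and $-\tfrac{k_c}{2}$. The extra care you take in justifying differentiation under the integral and the divergence theorem for strong solutions is consistent with the paper's (briefer) appeal to the strong-solution regularity and Appendix A.
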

\begin{proof}
	As $c(x,t)$ and $s(x,t)$ are strong solutions to our system of equations we find that by integration by parts\footnote{More information about it is given in Appendix \ref{secapp:additional_proofs}.} one has that 
	$$\frac{d}{dt}\M(c(t),s(t))=\int_{\O}\Big(d_c\Delta c(x,t)+k_fe(x,t)s(x,t)-\pa{k_r+k_c}c(x,t)$$
	$$+\frac{1}{2}\pa{\frac{2k_r+k_c}{k_r}}\pa{d_s\Delta s(x,t)-k_fe(x,t)s(x,t)+k_rc(x,t)}\Big)dx$$
	$$=-\frac{k_fk_c}{2k_r}\int_{\O}e(x,t)s(x,t)dx - \frac{k_c}{2}\int_{\O}c(x,t)dx, $$
	giving us the desired result.
\end{proof}
The investigation of the Boltzmann-like entropy is a bit more involved. To simplify the computations that will follow we define a few new functions that relate to the generators of $\H_{\eps_c,\eps_s}$ and its dissipation, as well as the generator of the dissipation of $\M$. To be able to do so we introduce another entropically relevant function, which makes its appearance in the entropic dissipation term:
\begin{equation}\label{eq:dissipation_entropy}
	\h(x)=x-\log x -1,\qquad x>0.
\end{equation}
Note that much like $\hh$, $\h$ is non-negative and $\h(x)=0$ if and only if $x=1$. We will also need a geometric constant for our definitions, $C_{\mathrm{LSI}}$, which is the log-Sobolev constant of the domain $\O$, i.e. the constant for which we have that
\begin{equation}\label{eq:LSI}
	\int_{\O}\frac{\abs{\na f(x)}^2}{f(x)}dx \geq C_{\mathrm{LSI}}\int_{\O}h\pa{f(x)|\overline{f}}dx,
\end{equation}
for any non-negative $f\in H^1\pa{\O}$ where $\overline{f}=\int_{\O}f(x)dx$. For more information on the above inequality we refer the reader to \cite{desvillettes2013exponential}. 
\begin{definition}
	For a given non-negative functions $e(x),c(x)$ and $s(x)$, strictly positive constants $k_r,k_f$ and $\eps_s$, and strictly positive functions $\eps_c(x)$ and $\einf(x)$
we define the \textit{mass density}
	\begin{equation}\label{eq: mass_density}
		\m(x): = \frac{k_f}{k_r}e(x)s(x) + c(x),
	\end{equation}
	and the \textit{partial entropy production density}
	\begin{equation}\label{eq:entropy_dissipation_density}
		\d_{\eps_c,\eps_s}(x):=\begin{dcases}
			\begin{array}{l}
			 k_f c(x)\bh\pa{\frac{e(x)s(x)}{c(x)}\Big|\frac{\einf(x)\eps_s}{\eps_c(x)}} +k_ce(x)h\pa{\frac{c(x)}{e(x)}\Big|\frac{\eps_c(x)}{\einf(x)}} \\ 
			 \qquad\qquad +C_{\mathrm{LSI}}d_sh\pa{s(x)|\overline{s_{\eps_s}}} +C_{\mathrm{LSI}}d_ch\pa{c(x)|\overline{c_{\eps_c(x)}}}
			\end{array} & x\in \O_1\\
			 \begin{array}{l}
			 	k_rc(x)\h\pa{\frac{e(x)s(x)}{\einf(x) \eps_s}}+k_cc(x)\h\pa{\frac{e(x)}{\einf(x)}}\\
			 	\qquad \qquad+k_fh\pa{e(x)s(x)|\einf(x) \eps_s}+C_{\mathrm{LSI}}d_sh\pa{s(x)|\overline{s_{\eps_s}}}
			 \end{array} & x\in \O_2,\\
		 	\begin{array}{l}
		 		 \pa{k_r+k_c}e(x)h\pa{\frac{c(x)}{e(x)}\Big|\frac{\eps_c(x)}{\einf(x)}}+k_fc(x)s(x)h\pa{\frac{e(x)}{c(x)}\Big|\frac{\einf(x)}{\eps_c(x)}} \\
		 		 \qquad\qquad +C_{\mathrm{LSI}}d_ch\pa{c(x)|\overline{c_{\eps_c(x)}}}
		 	\end{array} & x\in\O_3\\			
	 		k_fs(x)h\pa{e(x)|\einf(x)}+\pa{k_r+k_c}c(x)\h\pa{\frac{e(x)}{\einf(x)}} & x\in\O_4
			\end{dcases},
	\end{equation}
 where 
\begin{equation}\label{eq:def_of_domains}
	\begin{gathered}
		\O_1=\br{x\in\O\;|\; c(x)\geq  \eps_c(x),\;s(x)\geq  \eps_s},\quad \O_2=\br{x\in\O\;|\; c(x)< \eps_c(x),\;s(x)\geq  \eps_s},\\
		\O_3=\br{x\in\O\;|\; c(x)\geq  \eps_c(x),\;s(x)< \eps_s},\quad \O_4=\br{x\in\O\;|\; c(x)<  \eps_c(x),\;s(x)< \eps_s},
	\end{gathered}
\end{equation}
\begin{equation}\label{eq:def_of_overline_f_eps}
	\overline{f_\eps} = \int_{\O}\max\pa{f(x),\eps}dx,
\end{equation}
and $\bh(x|y)=(x-y)\log\pa{\frac{x}{y}}$ for $x,y\geq 0$ with the value of $\infty$ when $y=0$. 
\end{definition}

\begin{remark}\label{rem:explanation_on_partial_entropy_production_density}
	The appearance of the function $\d_{\eps_c,\eps_s}$ and its choice of name will become apparent when we will start differentiating $\H_{\eps_c,\eps_s}$. We would like to emphasise that its form is not surprising when considering the domain $\O_1,\O_2,\O_3$ and $\O_4$. Indeed, intuitively speaking in any domain where $c$ or $s$ are bigger than the threshold $\eps_c$ or $\eps_s$ respectively we find the relative entropy terms that push us towards the threshold, while if $c$ or $s$ are too small these terms mostly are replaced by linear terms (in $c$ or $s$) that are very small. 
\end{remark}
With this auxiliary functions at hand we can state our first entropy inequality:
\begin{theorem}\label{thm:first_entropy_inequality}
	Let $e(x,t),c(x,t)$ and $s(x,t)$ be non-negative bounded strong solutions to the irreversible enzyme system \eqref{Sys}.Then
	\begin{itemize}
		\item \label{item:entropic_estimation_full_diffusion} if all diffusion coefficients are strictly positive, then assuming that \eqref{eq:eps_conection} is satisfied we have that
			\begin{equation}\label{eq:first_entropy_inequality_full_diffusion}
			\begin{gathered}
				\frac{d}{dt}\E_{\eps_c,\eps_s,k}(t)  +\int_{\O}\Big( \d_{\eps_c,\eps_s}(x,t)+ \pa{\frac{kk_c}{2}-k_c-k_r-2k_f\eps_s}\m(x,t)\\
				+\pa{d_e\clsi-6\pa{\frac{\pa{k_c+k_r}}{M_0}+k_f}\max\pa{\eps_c,\eps_s}}  h\pa{e(x,t)|\overline{e(t)}}\Big)dx \leq 0,
			\end{gathered}
		\end{equation}
		where $\overline{f}=\int_{\O}f(x)dx$.
		\item if $d_e=d_c=0$ and $\eps_s$, $\eps_c(x)$ and $\einf(x)$ satisfy \eqref{eq:eps_conection_partial_diffusion} we have that
		\begin{equation}\label{eq:first_entropy_inequality_partial_diffusion}
		\begin{gathered}
		\frac{d}{dt}\E_{\eps_c,\eps_s,k}(t)  +\int_{\O}\d_{\eps_c,\eps_s}(x,t)dx 
	+\pa{\frac{k_ck}{2}-k_f\eps_s}\int_{\O}\m(x,t)dx \leq 0. 
		\end{gathered}
	\end{equation}
	\end{itemize}
\end{theorem}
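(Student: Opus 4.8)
The plan is to differentiate the three pieces of $\E_{\eps_c,\eps_s,k} = \H_{\eps_c,\eps_s} + k\M$ in time and reorganize the resulting bulk terms into the claimed combination of $\d_{\eps_c,\eps_s}$, $\m$, and $h(e|\overline{e})$. The mass part is already handled by Lemma \ref{lem:decay_of_mass}, which gives $\frac{d}{dt}\M = -\frac{k_fk_c}{2k_r}\int_\O es\,dx - \frac{k_c}{2}\int_\O c\,dx$; using the definition of $\m$, this equals $-\frac{k_c}{2}\int_\O \m(x,t)\,dx$, so $k\,\frac{d}{dt}\M = -\frac{kk_c}{2}\int_\O \m\,dx$. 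This already produces the coefficient $\frac{kk_c}{2}$ in front of $\m$; the correction $-k_c-k_r-2k_f\eps_s$ (or $-k_f\eps_s$ in the degenerate case) must come out of the entropy part, where some terms cannot be absorbed into $\d_{\eps_c,\eps_s}$ and get crudely bounded by a multiple of $\m$.

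The core of the argument is the differentiation of $\H_{\eps_c,\eps_s}$. First I would record that $h'(x|y) = \log(x/y)$, and that the cut-off entropy $h_\eps(x|\eps)$ has derivative $\log(x/\eps)\mathbf{1}_{x\geq\eps}$, which is continuous across the threshold $x=\eps$; this regularity (justified via the strong-solution framework of Remark \ref{rem:strong_solution} and the chain rule, with the integration-by-parts subtleties deferred to the Appendix) lets me write
\begin{equation*}
\frac{d}{dt}\H_{\eps_c,\eps_s} = \int_\O \log\!\pa{\tfrac{e}{\einf}}\p_t e \,dx + \int_{\{c\geq\eps_c\}} \log\!\pa{\tfrac{c}{\eps_c}}\p_t c\,dx + \int_{\{s\geq\eps_s\}}\log\!\pa{\tfrac{s}{\eps_s}}\p_t s\,dx.
\end{equation*}
Substituting the PDEs, the diffusion terms yield, after integration by parts and the Neumann conditions, $-d_e\int_\O \frac{|\na e|^2}{e}\,dx$ plus analogous terms for $c$ and $s$ on the super-threshold sets (with boundary/level-set contributions having a favorable sign since $\log(c/\eps_c)=0$ on $\p\{c\geq\eps_c\}$). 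By the log-Sobolev inequality \eqref{eq:LSI} these dominate $-\clsi d_e\int h(e|\overline e) - \clsi d_s\int_{\{s\geq\eps_s\}} \cdots - \clsi d_c\int_{\{c\geq\eps_c\}}\cdots$, which I reorganize using $\overline{s_{\eps_s}},\overline{c_{\eps_c}}$ from \eqref{eq:def_of_overline_f_eps} to match the diffusive pieces of $\d_{\eps_c,\eps_s}$; I expect some care is needed to pass from $h(s|\overline s)$ on a sub-domain to $h(s|\overline{s_{\eps_s}})$ on all of $\O$, using that $h_{\eps_s}\geq 0$ and that on $\{s<\eps_s\}$ the truncated entropy contributes nothing. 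The reaction terms combine into $\int_\O \big[\log\frac{e}{\einf} + \mathbf{1}_{c\geq\eps_c}\log\frac{c}{\eps_c}(-1) + \mathbf{1}_{s\geq\eps_s}\log\frac{s}{\eps_s}(-1)\big](\text{reaction rates})\,dx$ — wait, more precisely one gets $(-k_fes+(k_r+k_c)c)\log\frac{e}{\einf} + (k_fes-(k_r+k_c)c)\mathbf{1}_{c\geq\eps_c}\log\frac{c}{\eps_c} + (-k_fes+k_rc)\mathbf{1}_{s\geq\eps_s}\log\frac{s}{\eps_s}$, and using the key relation $\eps_c/(\einf\eps_s) = k_f/k_r$ (resp. \eqref{eq:eps_conection_partial_diffusion}) these regroup into sums of $k_fc\,\bh(\cdot|\cdot)$, $k_ce\,h(\cdot|\cdot)$, $k_rc\,\h(\cdot)$, etc., exactly the reactive entries of $\d_{\eps_c,\eps_s}$ on each of $\O_1,\dots,\O_4$.

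The case analysis over $\O_1,\O_2,\O_3,\O_4$ is where the bookkeeping concentrates, and this is the main obstacle: on each region only some of the three logarithmic factors are "active," so the reaction terms collapse differently, and one must verify in each case that (a) the genuinely dissipative combinations are nonnegative and equal the corresponding branch of $\d_{\eps_c,\eps_s}$, and (b) the leftover terms — those arising because, say, $\log\frac{e}{\einf}$ has no $\log\frac{c}{\eps_c}$ partner on $\O_2$ where $c<\eps_c$ — are controlled in absolute value by a constant times $\m(x) = \frac{k_f}{k_r}es + c$. The elementary inequalities needed for (b) are of the type $|\log\frac{e}{\einf}|\,es \lesssim \eps_s\,(\text{stuff}) + \m$ using $s<\eps_s$ or $c<\eps_c$, together with $\einf\geq M_0$ in the full-diffusion case (from $e_\infty$ being the constant $M_0$) or $\einf\geq\beta$ in the degenerate case; this is precisely where the coefficients $-k_c-k_r-2k_f\eps_s$ and $6\pa{\frac{k_c+k_r}{M_0}+k_f}\max(\eps_c,\eps_s)$ (resp. $-k_f\eps_s$) are generated, and where keeping $\eps_s$ constant (Remark \ref{rem:eps_condition_equivalent}) matters in the degenerate case because otherwise the $\na s$-integration by parts would also hit $\eps_c(x)$. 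Adding $k\,\frac{d}{dt}\M$ and collecting everything then yields \eqref{eq:first_entropy_inequality_full_diffusion} and \eqref{eq:first_entropy_inequality_partial_diffusion}. I would present the full-diffusion case in detail and indicate the (simpler, since no $\na c$ and no $h(e|\overline e)$ term) modifications for $d_e=d_c=0$.
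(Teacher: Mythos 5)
Your proposal follows essentially the same route as the paper: split $\frac{d}{dt}\E_{\eps_c,\eps_s,k}$ into the diffusive part (handled by a truncated log-Sobolev inequality on the super-level sets $\{c\geq\eps_c\}$, $\{s\geq\eps_s\}$), the reaction part (regrouped via $\eps_c/(\einf\eps_s)=k_f/k_r$ into the branches of $\d_{\eps_c,\eps_s}$ on $\O_1,\dots,\O_4$), and the mass contribution $-\frac{kk_c}{2}\int_\O\m\,dx$, with the leftover reaction terms absorbed into $\m$ and $h(e|\overline{e})$. The one imprecision is your claim that the leftovers on $\O_1$--$\O_4$ are controlled by a multiple of $\m$ alone: in the full-diffusion case terms such as $(k_r+k_c)c\pa{e/\einf-1}$ require the split $e\le 2\einf$ versus $e\ge 2\einf$ and the inequality $x-1\le 6\pa{\sqrt{x}-1}^2$ for $x\ge 2$ so that part of them is absorbed into $\pa{\sqrt{e}-\sqrt{\overline{e}}}^2\le h\pa{e|\overline{e}}$, which is precisely why the coefficient of $h\pa{e|\overline{e}}$ in \eqref{eq:first_entropy_inequality_full_diffusion} is $d_e\clsi$ diminished by $6\pa{\tfrac{k_c+k_r}{M_0}+k_f}\max\pa{\eps_c,\eps_s}$ --- a point your sketch gestures at when listing the generated coefficients but states inconsistently.
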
 
\begin{remark}\label{rem:einf_in_full_diffusion}
	When all diffusion coefficients are strictly positive, we find that the suspected equilibrium of $e$ is a constant $\einf$ that satisfies 
	$$\einf=M_0=\int_{\O}\pa{e_0(x)+c_0(x)}dx=\int_{\O}\pa{e(x,t)+c(x,t)}dx,$$
	We see that in this case \eqref{eq:eps_conection} is equivalent to
	\begin{equation}\nonumber
		\begin{gathered}
			\frac{\eps_c}{\einf\eps_s}=\frac{k_f}{k_r},
		\end{gathered}
	\end{equation} 
	which we will use in our proof.
\end{remark}
One technical lemma we require to prove Theorem \ref{thm:first_entropy_inequality} is the following:
\begin{lemma}\label{lem:truncated_log_sobolev}
	Let $\O$ be a bounded domain with $C^1$ boundary and let $f\in  H^2\pa{\Omega}$ be a non-negative function such that $\na f(x)\cdot \mathcal{n}(x)=0$, where $\mathcal{n}(x)$ is the outer normal to $\O$ at the point $x\in \partial \O$\footnote{The normal gradient is well defined due to a standard Trace theorem. More details are given in Remark \ref{rem:outward_gradient_is_defined} in Appendix \ref{secapp:additional_proofs}}. Then for any $\eps>0$ 
	  \begin{equation}\label{eq:truncated_log_sobolev}
	  	-\int_{\O \cap \br{x\;|\; f(x)\geq  \eps}}\log\pa{\frac{f(x)}{\eps}}\Delta f(x)dx \geq C_{\mathrm{LSI}} \int_{\O \cap \br{x\;|\; f(x)\geq  \eps}}h\pa{f(x)|\overline{f_{\eps}}}dx,
	  \end{equation}
 where $\overline{f_{\eps}}$ was defined in \eqref{eq:def_of_overline_f_eps} and $C_{\mathrm{LSI}}$ is the log-Sobolev constant of the domain $\O$. 
\end{lemma}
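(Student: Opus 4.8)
The plan is to deduce \eqref{eq:truncated_log_sobolev} from the (untruncated) logarithmic Sobolev inequality \eqref{eq:LSI} applied to the truncation $g:=\max(f,\eps)$, after rewriting the left-hand side of \eqref{eq:truncated_log_sobolev} as a Fisher-information integral $\int \abs{\na f}^2/f$ by an integration by parts.

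First I would record the elementary properties of $g$. Since $f\in H^2(\O)\subset H^1(\O)$ and $t\mapsto\max(t,\eps)$ is Lipschitz, the chain rule for Sobolev functions (Stampacchia's theorem) gives $g\in H^1(\O)$ with $\na g=\mathbf{1}_{\{f>\eps\}}\na f$; moreover $g\geq\eps>0$ and $\overline{g}=\int_{\O}\max(f,\eps)dx=\overline{f_{\eps}}$. Using also that $\na f=0$ a.e.\ on $\{f=\eps\}$, one gets $\int_{\O}\abs{\na g}^2/g\,dx=\int_{\{f\geq\eps\}}\abs{\na f}^2/f\,dx$. Feeding the nonnegative function $g\in H^1(\O)$ into \eqref{eq:LSI} therefore yields
$$\int_{\{f\geq\eps\}}\frac{\abs{\na f(x)}^2}{f(x)}dx \geq C_{\mathrm{LSI}}\int_{\O}h\pa{g(x)|\overline{f_{\eps}}}dx \geq C_{\mathrm{LSI}}\int_{\{f\geq\eps\}}h\pa{f(x)|\overline{f_{\eps}}}dx,$$
where the last step uses $g=f$ on $\{f\geq\eps\}$ and simply discards the nonnegative contribution $\int_{\{f<\eps\}}h(\eps|\overline{f_{\eps}})dx$, which is legitimate since $h\geq0$.

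It then remains to identify the left-hand side with the Laplacian term in \eqref{eq:truncated_log_sobolev}. The point is to \emph{avoid} integrating by parts over the possibly irregular set $\{f\geq\eps\}$, and instead to work on the fixed domain $\O$ with the test function $\psi:=\phi(f)$, where $\phi(t):=\pa{\log(t/\eps)}_+$ is globally Lipschitz and vanishes on $[0,\eps]$; then $\psi\in H^1(\O)$ (indeed $0\leq\psi\leq f/\eps\in L^2$ and $\abs{\na\psi}\leq\abs{\na f}/\eps$) with $\na\psi=\mathbf{1}_{\{f>\eps\}}\na f/f$. Since $f\in H^2(\O)$ satisfies the homogeneous Neumann condition $\na f\cdot\mathcal{n}=0$ on $\p\O$, Green's identity gives $\int_{\O}\na\psi\cdot\na f\,dx=-\int_{\O}\psi\,\Delta f\,dx$ with no boundary contribution, and since $\psi=\log(f/\eps)$ on $\{f\geq\eps\}$ (it vanishes elsewhere, while $\log(f/\eps)=0$ on $\{f=\eps\}$) this reads precisely
$$\int_{\{f\geq\eps\}}\frac{\abs{\na f(x)}^2}{f(x)}dx = -\int_{\{f\geq\eps\}}\log\pa{\frac{f(x)}{\eps}}\Delta f(x)dx.$$
Chaining this with the previous display is exactly \eqref{eq:truncated_log_sobolev}.

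The one genuinely delicate step is this integration by parts; everything else — the chain rule for Lipschitz compositions with $H^1$ functions, $\na f=0$ a.e.\ on level sets of $f$, the applicability of \eqref{eq:LSI} to $g$, and $h\geq0$ — is standard. The integration by parts is made rigorous precisely by the device above: passing to $\psi=\phi(f)\in H^1(\O)$ converts it into the usual $H^2$--$H^1$ Green identity on all of $\O$, whose boundary trace is annihilated by the Neumann condition, while the vanishing of $\phi$ on $[0,\eps]$ automatically removes any interior ``boundary of $\{f\geq\eps\}$'' term; a routine density argument (mollifying $f$, or replacing $\phi$ by a smooth $\phi_\delta$) justifies the identity at this level of regularity.
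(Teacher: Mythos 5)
Your proof is correct and follows the same overall strategy as the paper: apply the log-Sobolev inequality \eqref{eq:LSI} to the truncation $\max(f,\eps)$, restrict to $\{f\geq\eps\}$ using $h\geq 0$, and identify the Laplacian term with the Fisher information $\int_{\{f\geq\eps\}}\abs{\na f}^2/f$ by integration by parts. The one place where you genuinely diverge is the justification of that integration by parts. The paper first proves the identity under the extra assumption $f\geq\eta>0$ (so that $\log(f/\eps)\in H^1(\O)$) and then removes it by approximating with $f_n=f+\tfrac1n$, invoking dominated convergence on the left-hand side and Fatou's lemma on the right. You instead test directly against $\psi=\phi(f)$ with $\phi(t)=\pa{\log(t/\eps)}_+$, which is globally Lipschitz on $[0,\infty)$ and vanishes on $[0,\eps]$, so $\psi\in H^1(\O)$ with $\na\psi=\mathbf{1}_{\{f>\eps\}}\na f/f$ requires no positivity of $f$, and the Green identity for $H^2$--$H^1$ pairs (which is exactly Lemma \ref{lem:integration_by_parts_of_absolute} with $p=p'=2$) applies in one step. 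This bypasses the paper's entire limiting argument and is, if anything, cleaner; the remaining ingredients (Stampacchia's chain rule, $\na f=0$ a.e.\ on $\{f=\eps\}$, $\overline{g}=\overline{f_\eps}$) are all standard and are also used, in one form or another, in the paper's version.
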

\begin{proof}
	We start by noticing that that 
	$$\log\pa{\frac{f(x)}{\eps}}\chi_{\br{z\;|\;f(z)\geq  \eps}}(x)=\max\pa{\log\pa{\frac{f(x)}{\eps}},0}.$$
	To continue we will assume that there exists $\eta>0$ for which $f(x)\geq \eta$ in $\O$. Then
	$$\abs{\log\pa{\frac{f(x)}{\eps}}} \leq \abs{\log \pa{\frac{\eta}{\eps}}}+\frac{\abs{f(x)}}{\eps}\in L^2\pa{\O},$$
	and $\log\pa{\frac{f(x)}{\eps}}$ has a weak derivative\footnote{This is immediate from the fact that $\log(x)$ is $C^\infty$ on $[\mu,\infty)$ and has bounded derivatives on this interval. See, for instance, \cite{LiebLoss},} which satisfies
	$$\abs{\na\pa{\log\pa{\frac{f(x)}{\eps}}}}= \frac{\abs{\na f(x)}}{f(x)} \leq \frac{\abs{\na f(x)}}{\eta}\in L^2\pa{\O}.$$
	Thus $\log\pa{\frac{f(x)}{\eps}}\in H^1\pa{\O}$ and, as shown in Appendix \ref{secapp:additional_proofs}, we have that
		\begin{equation}\nonumber
			\begin{gathered}
						-\int_{\O \cap \br{x\;|\; f(x)\geq  \eps}}\log\pa{\frac{f(x)}{\eps}}\Delta f(x)dx
						= \int_{\O\cap \br{x\;|\; f(x)>  \eps}}\nabla\pa{\log\pa{\frac{f(x)}{\eps}}}  \cdot \na f(x) dx\\
						=\int_{\O\cap \br{x\;|\; f(x)>  \eps}}\frac{\abs{\nabla f(x)}^2}{f(x)}dx.
			\end{gathered}
	\end{equation}
	Denoting by $f_\eps(x)=\max\pa{f(x),\eps}$ we find that since $f\in H^1\pa{\O}$, so is $f_\eps$ and 
	\begin{equation}\nonumber
		\nabla f_\eps(x) 
		=\begin{cases}
			\na f(x)	& f(x)>\eps  \\  0 & f(x)\leq \eps
		\end{cases}.
	\end{equation} 
	(this is also mentioned in Appendix \ref{secapp:additional_proofs} and can be found in \cite{LiebLoss}). As such, the log-Sobolev inequality \eqref{eq:LSI} and the above imply that
	\begin{equation}\label{eq:truncated_LSI_I}
		\begin{gathered}
			-\int_{\O\cap \br{x\;|\; f(x)\geq  \eps}}\log\pa{\frac{f(x)}{\eps}}\Delta f(x)dx = \int_{\O}\frac{\abs{\nabla f_\eps(x)}^2}{f_\eps(x)}dx 	\\
			\geq C_{\mathrm{LSI}}\int_{\O}h\pa{f_\eps(x) |\overline{f_\eps}}dx \geq C_{\mathrm{LSI}}\int_{\O \cap \br{x\;|\; f(x)\geq  \eps}}h\pa{f_\eps(x) |\overline{f_\eps}}\\
			=C_{\mathrm{LSI}}\int_{\O \cap \br{x\;|\; f(x)\geq  \eps}}h\pa{f(x) |\overline{f_\eps}},			
		\end{gathered}
	\end{equation}
	which is the desired result.
	
	We now turn our attention to the case where $f$ is only non-negative on $\O$. \\
	For any $n\in\N$ we define
	$$f_n(x)=f(x)+\frac{1}{n},$$
	and notice that
	$$-\int_{\O\cap \br{x\;|\; f_n(x)\geq  \eps}}\log\pa{\frac{f_n(x)}{\eps}}\Delta f_n(x)dx=-\int_{\O\cap \br{x\;|\; f_n(x)\geq  \eps}}\log\pa{\frac{f_n(x)}{\eps}}\Delta f(x)dx.$$
	Since
	$$\abs{\log\pa{\frac{f_n(x)}{\eps}}\chi_{\br{z\;|\;f_n(z)\geq \eps}}(x)\Delta f(x)}\leq \frac{\abs{f_n(x)}}{\eps}\abs{\Delta f(x)}\leq \frac{\abs{f(x)}+1}{\eps}\abs{\Delta f(x)} \in L^1\pa{\O} $$
	we conclude from the Dominated Convergence Theory that\footnote{Here we have used the fact that $f_n(x) \geq f(x)$ by definition. Thus, if $f(x)\geq \eps$ then so are $f_n(x)$ for all $n$, while if $f(x)<\eps$ then we know that for $n$ large enough $f_n(x)$ satisfies the same. This shows that
	$$\chi_{\br{z\;|\;f_n(z) \geq \eps}}(x)\underset{n\to\infty}{\longrightarrow}\chi_{\br{z\;|\;f(z) \geq \eps}}(x)$$ pointwise.}
	\begin{equation}\label{eq:truncated_first_limit}
		\begin{split}
			\lim_{n\to\infty}&\pa{-\int_{\O\cap \br{x\;|\; f_n(x)\geq  \eps}}\log\pa{\frac{f_n(x)}{\eps}}\Delta f_n(x)dx}=\\	
			\lim_{n\to\infty}&\pa{-\int_{\O\cap \br{x\;|\; f_n(x)\geq  \eps}}\log\pa{\frac{f_n(x)}{\eps}}\Delta f(x)dx}\\\\
			&=-\int_{\O\cap \br{x\;|\; f(x)\geq  \eps}}\log\pa{\frac{f(x)}{\eps}}\Delta f(x)dx.
		\end{split}
	\end{equation}
	On the other hand, since $h(x|y)$ is a non-negative function and 
	$$h\pa{f_n(x)|\overline{\pa{f_n}_{\eps}}}\chi_{\br{z\;|\;f_n(z) \geq \eps}}(x)\underset{n\to\infty}{\longrightarrow}h\pa{f(x)|\overline{f_\eps}}\chi_{\br{z\;|\;f(z) \geq \eps}}(x)$$
	pointwise\footnote{Here we have also used the fact that $\overline{\pa{f_n}_{\eps}}\underset{n\to\infty}{\longrightarrow}\overline{f_\eps}$ according to the Dominated Convergence Theorem.}, 
	we can use Fatou's Lemma to conclude that
	$$C_{\mathrm{LSI}}\int_{\O\cap \br{x\;|\; f(x) \geq \eps}}h\pa{f(x)|\overline{f_\eps}} dx\leq \liminf_{n\to\infty}C_{\mathrm{LSI}}\int_{\O\cap \br{x\;|\; f_n(x) \geq \eps}}h\pa{f_n(x)|\overline{\pa{f_n}_\eps}}dx$$
	$$\leq \liminf_{n\to\infty}\pa{-\int_{\O\cap \br{x\;|\; f_n(x)\geq  \eps}}\log\pa{\frac{f_n(x)}{\eps}}\Delta f_n(x)dx}=-\int_{\O\cap \br{x\;|\; f(x)\geq  \eps}}\log\pa{\frac{f(x)}{\eps}}\Delta f(x)dx$$
	where we have used \eqref{eq:truncated_log_sobolev} and \eqref{eq:truncated_first_limit}. The proof is thus complete.
\end{proof}

\begin{proof}[Proof of Theorem \ref{thm:first_entropy_inequality}]
	We shall use the abusive notation $\E_{\eps_c,\eps_s,k}(t)$ for $\E_{\eps_c,\eps_s,k}\pa{e(t),c(t),s(t)}$ in this proof, as well as drop the $x$ variable from $\einf(x)$ and $\eps_c(x)$ for most of our estimations besides those where differences between the full diffusive and partial diffusive cases arise. \\
	From the definition of $\E_{\eps_c,\eps_s,k}$, Lemma \ref{lem:decay_of_mass}, and the fact that 
	$$\frac{d}{dx}h_{\eps}(x|\eps)=\begin{cases}
		\log \pa{\frac{x}{\eps}} &  x\geq \eps.\\
		0 & x<\eps
	\end{cases}$$
we find that
$$\frac{d}{dt}\E_{\eps_c,\eps_s,k}(t)=\int_{\O}\log\pa{\frac{e(x,t)}{\einf}}\partial_te(x,t)dx+\int_{\O\cap \br{x\;|\;c(x,t)\geq \eps_c}}\log\pa{\frac{c(x,t)}{\eps_c}}\partial_tc(x,t)dx$$
$$+\int_{\O\cap \br{x\;|\;s(x,t)\geq \eps_s}}\log\pa{\frac{s(x,t)}{\eps_s}}\partial_ts(x,t)dx+k\frac{d}{dt}\M(c(t),s(t))$$
$$= \int_{\O}\pa{d_e\Delta e(x,t)-k_fe(x,t)s(x,t)+\pa{k_r+k_c}c(x,t)}\log\pa{\frac{e(x,t)}{\einf}}dx $$
$$+\int_{\O\cap \br{x\;|\;c(x,t)\geq \eps_c}}\pa{d_c\Delta c(x,t)+k_fe(x,t)s(x,t)-\pa{k_r+k_c}c(x,t)}\log\pa{\frac{c(x,t)}{\eps_c}}dx$$
$$+\int_{\O\cap \br{x\;|\;s(x,t)\geq \eps_s}}\pa{d_s\Delta s(x,t)-k_fe(x,t)s(x,t)+k_r c(x,t)}\log\pa{\frac{s(x,t)}{\eps_s}}dx$$
$$	-\frac{k_fk_ck}{2k_r}\int_{\O}e(x,t)s(x,t)dx - \frac{k_ck}{2}\int_{\O}c(x,t)dx=\mathbf{I}+\mathbf{II}+\mathbf{III}$$
where
\begin{equation*}
\begin{aligned}
\mathbf{I}=\int_{\O}d_e\Delta e(x,t)\log\pa{\frac{e(x,t)}{\einf}}dx +\int_{\O\cap \br{x\;|\;c(x,t)\geq \eps_c}}d_c\Delta c(x,t)\log\pa{\frac{c(x,t)}{\eps_c}}dx\\
+\int_{\O\cap \br{x\;|\;s(x,t)\geq \eps_s}}d_s\Delta s(x,t)\log\pa{\frac{s(x,t)}{\eps_s}}dx,
\end{aligned}
\end{equation*}
\begin{equation}\label{II}
\begin{aligned}
\mathbf{II}=\int_{\O}&\pa{-k_fe(x,t)s(x,t)+\pa{k_r+k_c}c(x,t)}\log\pa{\frac{e(x,t)}{\einf}}dx\\
&\quad +\int_{\O\cap \br{x\;|\;c(x,t)\geq \eps_c}}\pa{k_fe(x,t)s(x,t)-\pa{k_r+k_c}c(x,t)}\log\pa{\frac{c(x,t)}{\eps_c}}dx\\
&\quad +\int_{\O\cap \br{x\;|\;s(x,t)\geq \eps_s}}\pa{-k_fe(x,t)s(x,t)+k_r c(x,t)}\log\pa{\frac{s(x,t)}{\eps_s}}dx
\end{aligned}
\end{equation}
and
\begin{equation}\label{eq:def_of_III}
	\mathbf{III}=-\frac{k_fk_ck}{2k_r}\int_{\O}e(x,t)s(x,t)dx - \frac{k_ck}{2}\int_{\O}c(x,t)dx = -\frac{kk_c}{2}\int_{\O}\m(x)dx.
\end{equation}
As $\mathbf{III}$ is already a multiple of the integration of $\m$, we are left with estimating $\mathbf{I}$ and $\mathbf{II}$. \\
To simplify the coming integrals we will drop the $t$ variable (even though the division to domains we will use in the estimation of $\mathbf{II}$ will depend on it via $c(x,t)$, $s(x,t)$ and $e(x,t)$). Using Lemma \ref{lem:truncated_log_sobolev} (whose conditions are satisfied according to the assumptions) when all diffusion constants are strictly positive (and thus $\einf$ and $\eps_c$ are constants) we find that
$$\mathbf{I} \leq  -d_e\int_{\O}\frac{\abs{\na e(x)}^2}{e(x)}dx -C_{\mathrm{LSI}}d_c \int_{\O\cap \br{x\;|\;c(x)\geq \eps_c}}h\pa{c(x)|\overline{c_{\eps_c}}}dx$$
$$-C_{\mathrm{LSI}}d_s \int_{\O\cap \br{x\;|\;s(x)\geq \eps_s}}h\pa{s(x)|\overline{s_{\eps_s}}}dx$$
from which we attain, using the log-Sobolev inequality on $\O$ \eqref{eq:LSI}
\begin{equation}\label{eq:estimation_on_I}
	\begin{split}
		\mathbf{I} \leq - C_{\mathrm{LSI}}\Big(d_e\int_{\O}h(e(x)|\overline{e})dx+&d_c \int_{\O\cap \br{x\;|\;c(x)\geq \eps_c}}h\pa{c(x)|\overline{c_{\eps_c}}}dx\\
		&+d_s \int_{\O\cap \br{x\;|\;s(x)\geq \eps_s}}h\pa{s(x)|\overline{s_{\eps_s}}}dx\Big),
	\end{split}
\end{equation}
where $\overline{e}=\int_{\O}e(x)dx$. In the case $d_e=d_c=0$ the above remains true as in this case
\begin{equation*}
	\begin{aligned}
		\mathbf{I}=\int_{\O\cap \br{x\;|\;s(x,t)\geq \eps_s}}d_s\Delta s(x,t)\log\pa{\frac{s(x,t)}{\eps_s}}dx,
	\end{aligned}
\end{equation*}
and applying Lemma \ref{lem:truncated_log_sobolev} yields the desired result. It is important to note that we are allowed to use this lemma since $\eps_s$ is \textit{a constant} (as was briefly discussed in Remark \ref{rem:eps_condition_equivalent}).
\\
The estimation of $\mathbf{II}$ is slightly more complicated and will require us to both divide the domain $\O$ into the subdomains $\O_1,\O_2,\O_3$ and $\O_4$, defined in \eqref{eq:def_of_domains}, and to consider the two difference cases for $\eps_c$ and $\einf$
$$\begin{array}{ccc}
	\einf=M_0\quad& \eps_c\text{ is a constant that satisfies }\eqref{eq:eps_conection} & d_e,d_c,d_s>0\\ 
	\einf(x)=e_0(x)+c_0(x)\quad&  \eps_c\text{ is a function that satisfies }\eqref{eq:eps_conection_partial_diffusion} & d_e=d_c=0
\end{array}.$$
Writing $\mathbf{II}=\int_{\O}\r(x)dx$
with 
$$\r(x)=\pa{-k_fe(x)s(x)+\pa{k_r+k_c}c(x)}\log\pa{\frac{e(x)}{\einf}}$$
$$+\pa{k_fe(x)s(x)-\pa{k_r+k_c}c(x)}\chi_{\br{z\;|\;c(z)\geq \eps_c}}(x)\log\pa{\frac{c(x)}{\eps_c}}$$
$$+\pa{-k_fe(x)s(x)+k_r c(x)}\chi_{\br{z\;|\;s(z)\geq \eps_s}}(x)\log\pa{\frac{s(x)}{\eps_s}}$$
we see that:\\
\underline{For  $x\in \O_1=\br{x\;|\;c(x)\geq \eps_c,\; s(x)\geq  \eps_s}$:} 
$$\r(x)=-\pa{k_fe(x)s(x)-k_r c(x)}\log\pa{\frac{e(x)s(x)\eps_c}{\einf \eps_s c(x)}}+k_c c(x)\log\pa{\frac{e(x)\eps_c}{\einf c(x)}}$$
$$=-k_f c(x)\pa{\frac{e(x)s(x)}{c(x)}-\frac{\einf \eps_s}{\eps_c}}\log\pa{\frac{\frac{e(x)s(x)}{c(x)}}{\frac{\einf \eps_s}{\eps_c}}}
-k_c c(x)\log\pa{\frac{\frac{c(x)}{e(x)}}{\frac{\eps_c}{\einf}}}$$
$$=-k_fc(x)\bh\pa{\frac{e(x)s(x)}{c(x)}\Big| \frac{\einf \eps_s}{\eps_c}}-k_ce(x)h\pa{\frac{c(x)}{e(x)}\Big| \frac{\eps_c}{\einf}}-k_c c(x)+k_c \eps_c \frac{e(x)}{\einf}$$
$$=-\d_{\eps_c,\eps_s}(x) +C_{\mathrm{LSI}}d_sh\pa{s(x)|\overline{s_{\eps_s}}}+C_{\mathrm{LSI}}d_ch\pa{c(x)|\overline{c_{\eps_c}}}-k_c c(x)+k_c \eps_c \frac{e(x)}{\einf}$$
where we have used condition \eqref{eq:eps_conection} when all the diffusion coefficients are strictly positive and condition \eqref{eq:eps_conection_partial_diffusion} when $d_e=d_c=0$ together with the definition of $\d_{\eps_c,\eps_s}$, \eqref{eq:entropy_dissipation_density}. The last term will be estimated for our two distinct cases.
\begin{itemize}
	\item \textit{All diffusion coefficients are strictly positive.}  In this case we see that by using the inequality
	\begin{equation}\label{eq:important_inequality}
		x-1 \leq 6\pa{\sqrt{x}-1}^2,\quad \forall x\geq 2,
	\end{equation}
	and the fact that $c(x)\geq \eps_c$ on $\O_1$, 
	$$-k_c c(x)+k_c \eps_c \frac{e(x)}{\einf} \leq \begin{cases}
		k_c \pa{2\eps_c-c(x)}	& e(x) \leq 2\einf\\
		k_c \eps_c\pa{\frac{e(x)}{\einf}-1} & e(x) \geq 2\einf
	\end{cases}$$
	$$\leq \begin{cases}
		k_c c(x)	& e(x) \leq 2\einf\\
		\frac{6k_c\eps_c}{\einf}\pa{\sqrt{e}-\sqrt{\einf}}^2 & e(x) \geq 2\einf
	\end{cases}.$$
	Since
	\begin{equation}\label{eq:total_mass_and_average_of_e}
		\overline{e}=\int_\O e(x)dx \leq \int_{\O}\pa{e(x)+c(x)}dx= M_0=\einf
	\end{equation}
	we have that for $e(x)\geq \einf$
	$$\pa{\sqrt{e(x)}-\sqrt{\einf}}^2 \leq \pa{\sqrt{e(x)}-\sqrt{\overline{e}}}^2.$$
	Combining the above we see that
	\begin{equation}\label{eq:estimation_on_O_1_integrand}
		\begin{gathered}
			\r(x) \leq -\d_{\eps_c,\eps_s}(x) +C_{\mathrm{LSI}}d_sh\pa{s(x)|\overline{s_{\eps_s}}}+C_{\mathrm{LSI}}d_ch\pa{c(x)|\overline{c_{\eps_c}}}\\
			+k_c c(x)+\frac{6k_c\eps_c}{\einf}\pa{\sqrt{e(x)}-\sqrt{\overline{e}}}^2,\quad \forall x\in\O_1.
		\end{gathered}
	\end{equation}
	and as such 
		\begin{equation}\label{eq:estimation_on_O_1_integral_full_diffusion}
		\begin{split}
			\int_{\O_1}\r(x)dx \leq &\int_{\O_1}\pa{-\d_{\eps_c,\eps_s}(x) +C_{\mathrm{LSI}}d_sh\pa{s(x)|\overline{s_{\eps_s}}}+C_{\mathrm{LSI}}d_ch\pa{c(x)|\overline{c_{\eps_c}}}}dx\\
			+&k_c\int_{\O_1}c(x)dx +\frac{6k_c\eps_c}{\einf}\int_{\O_1}\pa{\sqrt{e(x)}-\sqrt{\overline{e}}}^2dx. 
		\end{split}
	\end{equation}
	\item \textit{$d_e=d_c=0$}. In this case we see that as 
	\begin{equation}\label{eq:e_control_in_partial_diffusion}
		e(x) \leq e(x)+c(x) =e_0(x)+c_0(x)=\einf(x),
	\end{equation}
	(the conservation law \eqref{law3} holds for strong solutions) the fact that $c(x) \geq \eps_c(x)$ on $\O_1$ implies that
	$$-k_c c(x)+k_c \eps_c(x) \frac{e(x)}{\einf(x)} \leq 0$$
	yielding the bound 
		\begin{equation}\label{eq:estimation_on_O_1_integral_partial_diffusion}
		\begin{split}
			\int_{\O_1}\r(x)dx \leq &\int_{\O_1}\pa{-\d_{\eps_c,\eps_s}(x) +C_{\mathrm{LSI}}d_sh\pa{s(x)|\overline{s_{\eps_s}}}}dx.
		\end{split}
	\end{equation}
\end{itemize}

\underline{For  $x\in \O_2=\br{x\in\O\;|\; c(x)< \eps_c,\;s(x)\geq  \eps_s}$:}  
\begin{equation}\label{b9}
\begin{aligned}
\r(x)&=\pa{-k_f e(x)s(x)+k_r c(x)}\log\pa{\frac{e(x)s(x)}{\einf \eps_s}} + k_c c(x)\log\pa{\frac{e(x)}{\einf}}\\
&=-k_f h\pa{e(x)s(x)|\einf\eps_s}-k_fe(x)s(x)+k_f\einf \eps_s \\
&\quad -k_rc(x) \h\pa{\frac{e(x)s(x)}{\einf\eps_s}}+k_r c(x)\pa{\frac{e(x)s(x)}{\einf \eps_s}-1}\\
&\quad -k_cc(x) \h\pa{\frac{e(x)}{\einf}}+k_c c(x)\pa{\frac{e(x)}{\einf }-1}.
\end{aligned}
\end{equation}
Thus
\begin{equation}\label{eq:estimation_on_r_for_omega_2}
	\begin{gathered}
		\r(x)=-\d_{\eps_c,\eps_s}(x) +C_{\mathrm{LSI}}d_sh\pa{s(x)|\overline{s_{\eps_s}}} +\underbrace{\pa{k_c+k_r} c(x)\pa{\frac{e(x)}{\einf }-1}}_{\mathbf{A}}\\
			+\underbrace{k_r\frac{c(x)e(x)}{\einf}\pa{\frac{s(x)}{\eps_s}-1}+k_f\eps_s\pa{\einf-\frac{e(x)s(x)}{\eps_s}}}_{\mathbf{B}}.
	\end{gathered}
\end{equation}
Again, to estimate $\mathbf{A}$ and $\mathbf{B}$ we will consider our two cases.
\begin{itemize}
	\item \textit{All diffusion coefficients are strictly positive.}  In this case we see that, much like the estimation on $\O_1$
	$$\mathbf{A} = \pa{k_c+k_r} c(x)\pa{\frac{e(x)}{\einf }-1} \leq \begin{cases}
		\pa{k_c+k_r} c(x)	& e(x)\leq 2\einf \\
		\frac{6\pa{k_c+k_r} c(x)}{\einf}\pa{\sqrt{e(x)}-\sqrt{\einf}}^2 & e(x)\geq 2\einf
	\end{cases}$$
	$$\leq \pa{k_c+k_r} c(x)+\frac{6\pa{k_c+k_r} \eps_c}{\einf} \pa{\sqrt{e(x)}-\sqrt{\overline{e}}}^2,$$
	where we have used the fact that $c(x)\leq \eps_c$ on $\O_2$. Using this fact again, together with the fact that $s(x)\geq \eps_s$ on $\O_2$ and condition \eqref{eq:eps_conection}, we find that
	$$\mathbf{B} \leq k_r\eps_c\frac{e(x)}{\einf}\pa{\frac{s(x)}{\eps_s}-1}+k_f\eps_s\pa{\einf-\frac{e(x)s(x)}{\eps_s}}$$
	$$=k_f\eps_s e(x)\pa{\frac{s(x)}{\eps_s}-1}+k_f\eps_s\pa{\einf-\frac{e(x)s(x)}{\eps_s}}=k_f\eps_s\pa{\einf-e(x)}.$$
	Thus
	$$\int_{\O_2}\mathbf{B}dx \leq k_f \eps_s\int_{\O_2}\pa{\pa{\einf -\overline{e}}+\pa{\overline{e}-e(x)}}dx$$
	$$=\underbrace{k_f\eps_s\abs{\O_2}\int_{\O}c(x)dx}_{\text{from }\eqref{eq:total_mass_and_average_of_e}}+k_f\int_{\O_2}\eps_s\pa{\overline{e}-e(x)}dx$$
	$$\leq k_f\eps_s\int_{\O}c(x)dx+ k_f\int_{\O_2\cap\br{x\;|\;e(x)\leq \overline{e}\leq 2e(x)}}\eps_s\pa{\overline{e}-e(x)}dx+k_f\int_{\O_2\cap\br{x\;|\;\overline{e}\geq 2e(x)} }\eps_s\pa{\overline{e}-e(x)}dx$$
	$$\leq k_f\eps_s\int_{\O}c(x)dx+ k_f\int_{\O_2\cap\br{x\;|\;e(x)\leq \overline{e}\leq 2e(x)}}\eps_se(x)dx+k_f\eps_s\int_{\O_2\cap\br{x\;|\;\frac{\overline{e}}{e(x)}\geq 2} }e(x)\pa{\frac{\overline{e}}{e(x)}-1}dx$$
	$$\leq k_f\eps_s\int_{\O}c(x)dx+k_f\int_{\O_2}e(x)s(x)dx+6k_f\eps_s \int_{\O_2}\pa{\sqrt{e(x)}-\sqrt{\overline{e}}}^2dx,$$
	where we have used the fact that $s(x)\geq \eps_s$ again, as well as inequality \eqref{eq:important_inequality}.
	
	Combining the estimations on $\mathbf{A}$ and $\mathbf{B}$ with \eqref{eq:estimation_on_r_for_omega_2} yields
	\begin{equation}\label{eq:estimation_on_O_2_integral_full_diffusion}
		\begin{split}
			\int_{\O_2}\r(x)dx \leq &\int_{\O_2}\pa{-\d_{\eps_c,\eps_s}(x) +d_sh\pa{s(x)|\overline{s_{\eps_s}}}}dx\\
			+&\pa{k_c+k_r}\int_{\O_2}c(x)dx+k_f\eps_s\int_{\O}c(x)dx+k_f\int_{\O_2}e(x)s(x)dx\\
			+&6\pa{k_f\eps_s+\frac{\pa{k_r+k_c}\eps_c}{\einf}}\int_{\O_2}\pa{\sqrt{e(x)}-\sqrt{\overline{e}}}^2dx.
		\end{split}
	\end{equation}
	\item \textit{$d_e=d_c=0$}. In this case we have that as $e(x)\leq \einf(x)$
	$$\mathbf{A} = \pa{k_c+k_r} c(x)\pa{\frac{e(x)}{\einf(x) }-1} \leq 0, $$
	and using condition \eqref{eq:eps_conection_partial_diffusion} together with the facts that $c(x)< \eps_c(x)$ and $s(x)\geq \eps_s$ we have that exactly like in the previous case
	$$\mathbf{B} \leq k_f \eps_s\pa{\einf(x)-e(x)} = k_f\eps_s c(x),$$
	where we have used \eqref{eq:e_control_in_partial_diffusion} in the last step. We conclude that in this case
		\begin{equation}\label{eq:estimation_on_O_2_integral_partial_diffusion}
		\begin{split}
			\int_{\O_2}\r(x)dx \leq &\int_{\O_2}\pa{-\d_{\eps_c,\eps_s}(x) +d_sh\pa{s(x)|\overline{s_{\eps_s}}}}dx + k_f\eps_s\int_{\O_2}c(x)dx.
		\end{split}
	\end{equation}
\end{itemize}

\underline{For  $x\in \O_3=\br{x\in\O\;|\; c(x)\geq \eps_c,\;s(x)< \eps_s}$:}  
\begin{equation}\label{b12}
\begin{aligned}
\r(x)&=-\pa{k_r+k_c}c(x)\log\pa{\frac{\frac{c(x)}{e(x)}}{\frac{\eps_c}{\einf}}}-k_f e(x)s(x)\log\pa{\frac{\frac{e(x)}{c(x)}}{\frac{\einf}{\eps_c}}}\\
&=-\pa{k_r+k_c}e(x)h\pa{\frac{c(x)}{e(x)}\Big| \frac{\eps_c}{\einf}}+\frac{k_r+k_c}{\einf}\pa{\eps_c e(x)-\einf c(x)}\\
&\quad-k_fs(x)c(x)h\pa{\frac{e(x)}{c(x)}\Big| \frac{\einf}{\eps_c}}-\frac{k_f s(x)}{\eps_c}\pa{ \eps_c e(x)-\einf c(x)}.
\end{aligned}
\end{equation}
Thus
\begin{equation}\label{eq:estimation_on_r_for_omega_3}
	\begin{split}
		\r(x)=-\d_{\eps_c,\eps_s}(x) &+C_{\mathrm{LSI}}d_ch\pa{c(x)|\overline{c_{\eps_c}}} \\
		&+\underbrace{\pa{\frac{k_r+k_c}{\einf}-\frac{k_f s(x)}{\eps_c}}\pa{\eps_c e(x)-\einf c(x)}}_{\mathbf{D}}.
	\end{split}
\end{equation}
We will estimate $\mathbf{D}$ in our two distinct cases.
\begin{itemize}
	\item \textit{All diffusion coefficients are strictly positive.}  In this case we see that if $\eps_c e(x) - \einf c(x) \leq 0$ then since $s(x) \leq \eps_s$ on $\O_3$
$$-\frac{k_f s(x)}{\eps_c}\pa{\eps_c e(x)-\einf c(x)}\leq -\frac{k_f \eps_s}{\eps_c}\pa{\eps_c e(x)-\einf c(x)}=-\frac{k_r }{\einf}\pa{\eps_c e(x)-\einf c(x)},$$
where we have used \eqref{eq:eps_conection}. As such
$$\mathbf{D} \leq \frac{k_c}{\einf}\pa{\eps_c e(x)-\einf c(x)}\leq 0.$$
If, on the other hand, $\eps_c e(x) - \einf c(x) \geq 0$ then 
$$\mathbf{D}  \leq\frac{k_r+k_c}{\einf}\pa{\eps_c e(x)-\einf c(x)}$$
i.e. for all $x\in\O_3$
\begin{equation}\label{eq:estimation_on_D_for_O_3}
	\mathbf{D}  \leq \max\pa{\frac{k_r+k_c}{\einf}\pa{\eps_c e(x)-\einf c(x)},0}.
\end{equation}
Using the fact that $c(x)\geq \eps_c$ on $\O_3$ we conclude that
$$\frac{k_r+k_c}{\einf} \pa{\eps_c e(x)-\einf c(x)}\leq \pa{k_r+k_c} \eps_c \pa{\frac{e(x)}{\einf}-1}$$
$$\leq \begin{cases}
	\pa{k_r+k_c}\eps_c& e(x)\leq 2\einf  \\
	\frac{6\pa{k_r+k_c} \eps_c}{\einf}\pa{\sqrt{e(x)}-\sqrt{\einf}}^2 & e(x)\geq 2\einf  
\end{cases}$$
$$\leq \pa{k_r+k_c} c(x)+\frac{6\pa{k_r+k_c}\eps_c}{\einf}\pa{\sqrt{e(x)}-\sqrt{\overline{e}}}^2$$

where we once again used inequality \eqref{eq:important_inequality} and a similar calculation to that we have performed when investigating $\O_1$.

From the above, \eqref{eq:estimation_on_r_for_omega_3} and \eqref{eq:estimation_on_D_for_O_3} we conclude that
\begin{equation}\label{eq:estimation_on_O_3_integrand}
	\begin{split}
		\r(x)dx \leq &-\d_{\eps_c,\eps_s}(x) +d_ch\pa{c(x)|\overline{c_{\eps_c}}}dx\\
		+&\pa{k_c+k_r} c(x)+\frac{6\pa{k_c+k_r}\eps_c}{\einf}\pa{\sqrt{e(x)}-\sqrt{\overline{e}}}^2
	\end{split}
\end{equation}
and as such
\begin{equation}\label{eq:estimation_on_O_3_integral_full_diffusion}
	\begin{split}
		\int_{\O_3}\r(x)dx \leq &\int_{\O_3}\pa{-\d_{\eps_c,\eps_s}(x) +d_ch\pa{c(x)|\overline{c_{\eps_c}}}}dx\\
		+&\pa{k_c+k_r}\int_{\O_3}c(x)dx+\frac{6\pa{k_r+k_c}\eps_c}{\einf}\int_{\O_3}\pa{\sqrt{e(x)}-\sqrt{\overline{e}}}^2dx.
	\end{split}
\end{equation}
	\item \textit{$d_e=d_c=0$}. In this case we see that since $\eps_c(x)\leq c(x)$ and $e(x)\leq \einf(x)$
	$$\eps_c(x)e(x) - \einf(x)c(x) \leq 0.$$
	Using condition \eqref{eq:eps_conection_partial_diffusion} instead of \eqref{eq:eps_conection} and following the same estimation that was shown in the previous case we find that
	$$\mathbf{D} \leq \frac{k_c}{\einf(x)}\pa{\eps_c(x) e(x)-\einf c(x)}\leq 0.$$
	We conclude that in this case
	\begin{equation}\label{eq:estimation_on_O_3_integral_partial_diffusion}
		\begin{split}
			\int_{\O_3}\r(x)dx \leq &-\int_{\O_3}\d_{\eps_c,\eps_s}(x) dx.
		\end{split}
	\end{equation}
\end{itemize}

\underline{For  $x\in \O_4=\br{x\in\O\;|\; c(x)<  \eps_c,\;s(x)< \eps_s}$:}  
\begin{equation}\label{b14}
\begin{aligned}
\r(x)&=\pa{-k_fe(x)s(x)+\pa{k_r+k_c}c(x)}\log\pa{\frac{e(x)}{\einf}}\\
&=-k_f s(x)h\pa{e(x)|\einf}-k_fs(x)e(x)+k_f\einf s(x)\\
&\quad -\pa{k_r+k_c}c(x)\h\pa{\frac{e(x)}{\einf}} + \frac{\pa{k_r+k_c}}{\einf}c(x)e(x)-\pa{k_r+k_c}c(x).
\end{aligned}
\end{equation}
Thus
\begin{equation}\label{eq:estimation_on_r_for_omega_4}
	\begin{gathered}
		\r(x)=-\d_{\eps_c,\eps_s}(x) +k_fs(x)\pa{\einf-e(x)} + \frac{\pa{k_r+k_c}c(x)}{\einf}\pa{e(x)-\einf}.
	\end{gathered}
\end{equation}
Unsurprisingly, the last term will be estimated for our two distinct cases.
\begin{itemize}
	\item \textit{All diffusion coefficients are strictly positive.}  In this case we notice that following similar ideas to those presented in the investigation of $\O_2$ and the fact that $s(x)\leq \eps_s$ on $\O_4$ we find that
	$$\int_{\O_4}k_fs(x)\pa{\einf-e(x)}dx =k_f\underbrace{\pa{\einf-\overline{e}}}_{\geq 0}\int_{\O_4}s(x)dx+ k_f \int_{\O_4 }s(x)\pa{\overline{e}-e(x)}dx$$
	$$\leq k_f \eps_s\abs{\O_4}\underbrace{\int_{\O}c(x)dx}_{\text{from }\eqref{eq:total_mass_and_average_of_e}}+k_f \int_{\O_4 \cap \br{x\;|\; e(x)\leq \overline{e} \leq 2e(x)}}s(x)\pa{\overline{e}-e(x)}dx$$
	$$+k_f \int_{\O_4 \cap \br{x\;|\; \overline{e} \geq 2e(x) }}s(x)\pa{\overline{e}-e(x)}dx$$
	$$\leq  k_f \eps_s \int_{\O}c(x)dx+k_f \int_{\O_4 }e(x)s(x)dx+6k_f\eps_s \int_{\O_4}\pa{\sqrt{e(x)}-\sqrt{\overline{e}}}^2dx.$$
	Moreover, much like previous estimations (for instance on $\O_3$) we find that as $c(x)\leq \eps_c$ on $\O_4$
	$$\frac{\pa{k_r+k_c}c(x)}{\einf}\pa{e(x)-\einf} \leq \begin{cases}
		\pa{k_r+k_c}c(x)	& e(x)\leq 2\einf \\
		\frac{6\pa{k_r+k_c}\eps_c}{\einf}\pa{\sqrt{e(x)}-\sqrt{\einf} }^2  &  e(x)\geq 2\einf
	\end{cases} $$
	$$\leq \pa{k_r+k_c}c(x)+ \frac{6\pa{k_r+k_c}\eps_c}{\einf}\pa{\sqrt{e(x)}-\sqrt{\overline{e}} }^2.$$
	These inequalities together with \eqref{eq:estimation_on_r_for_omega_4} yield 
	\begin{equation}\label{eq:estimation_on_O_4_integral_full_diffusion}
		\begin{split}
			\int_{\O_4}\r(x)dx \leq &-\int_{\O_4}\d_{\eps_c,\eps_s}(x) dx +  k_f \eps_s \int_{\O}c(x)dx+k_f \int_{\O_4 }e(x)s(x)dx\\
			+&\pa{k_c+k_r}\int_{\O_4}c(x)dx+6\pa{\frac{\pa{k_r+k_c}\eps_c}{\einf}+k_f\eps_s}\int_{\O_4}\pa{\sqrt{e(x)}-\sqrt{\overline{e}}}^2dx.
		\end{split}
	\end{equation}
	\item \textit{$d_e=d_c=0$}. In this case, since $e(x)\leq \einf(x)$ and $s(x)\leq \eps_s$ we find that due to \eqref{eq:e_control_in_partial_diffusion}
	$$k_fs(x)\pa{\einf(x)-e(x)} + \frac{\pa{k_r+k_c}c(x)}{\einf(x)}\pa{e(x)-\einf(x)} \leq k_fs(x)c(x) \leq k_f\eps_s c(x).$$
	We conclude that in this case
	\begin{equation}\label{eq:estimation_on_O_4_integral_partial_diffusion}
		\begin{split}
			\int_{\O_4}\r(x)dx \leq &-\int_{\O_4}\d_{\eps_c,\eps_s}(x) dx +  k_f \eps_s \int_{\O_4}c(x)dx.
		\end{split}
	\end{equation}
\end{itemize}

Using the fact that $\O_1$, $\O_2$, $\O_3$ and $\O_4$ are mutually disjoint with 
$$\bigcup_{i=1}^4 \O_i=\O,\quad \O_1\cup\O_2=\br{x\;|\; s(x)\geq \eps_s},\quad \O_1\cup\O_3=\br{x\;|\; c(x)\geq \eps_c},$$
and the fact that
$$\mathbf{II}=\int_{\O_1}\r(x)dx+\int_{\O_2}\r(x)dx+\int_{\O_3}\r(x)dx+\int_{\O_4}\r(x)dx$$
we see that \eqref{eq:estimation_on_O_1_integral_full_diffusion}, \eqref{eq:estimation_on_O_2_integral_full_diffusion}, \eqref{eq:estimation_on_O_3_integral_full_diffusion} and \eqref{eq:estimation_on_O_4_integral_full_diffusion} imply that when all diffusion constants are strictly positive
\begin{equation}\label{eq:esitmation_on_II_full_diffusion}
	\begin{gathered}
		\mathbf{II} \leq -\int_{\O}\d_{\eps_c,\eps_s}(x)dx +\clsi \int_{\O \cap \br{x\;|\; s(x)\geq \eps_s}}d_sh\pa{s(x)|\overline{s_{\eps_s}}}dx\\
		+\clsi \int_{\O \cap \br{x\;|\; c(x)\geq \eps_c}}d_ch\pa{c(x)|\overline{c_{\eps_c}}}dx\\
		+ \pa{k_c+k_r+2k_f\eps_s}\int_{\O}c(x)dx + k_f\int_{\O}e(x)s(x)dx \\ +6\pa{\frac{\pa{k_c+k_r}\eps_c}{\einf}+k_f\eps_s}\int_{\O}\pa{\sqrt{e(x)}-\sqrt{\overline{e}}}^2dx
	\end{gathered}
\end{equation}
and \eqref{eq:estimation_on_O_1_integral_partial_diffusion}, \eqref{eq:estimation_on_O_2_integral_partial_diffusion}, \eqref{eq:estimation_on_O_3_integral_partial_diffusion} and \eqref{eq:estimation_on_O_4_integral_partial_diffusion} imply that when $d_e=d_c=0$
\begin{equation}\label{eq:esitmation_on_II_partial_diffusion}
\begin{gathered}
	\mathbf{II} \leq -\int_{\O}\d_{\eps_c,\eps_s}(x)dx +\clsi \int_{\O \cap \br{x\;|\; s(x)\geq \eps_s}}d_sh\pa{s(x)|\overline{s_{\eps_s}}}dx
	+ k_f\eps_s\int_{\O}c(x)dx. 
\end{gathered}
\end{equation}
Combining the \eqref{eq:esitmation_on_II_full_diffusion} with \eqref{eq:def_of_III}, \eqref{eq:estimation_on_I} and the fact that
$$\frac{d}{dt}\E_{\eps_c,\eps_s,k}(t) = \mathbf{I}+\mathbf{II}+\mathbf{III}$$
yields the estimation
\begin{equation}\nonumber
	\begin{gathered}
		\frac{d}{dt}\E_{\eps_c,\eps_s,k}(t)  \leq -\int_{\O}\d_{\eps_c,\eps_s}(x,t)dx -d_e\clsi\int_{\O }h\pa{e(x,t)|\overline{e(t)}}dx\\
		+ \pa{k_c+k_r+2k_f\eps_s-\frac{k_ck}{2}}\int_{\O}c(x,t)dx + \pa{k_r-\frac{k_ck}{2}}\frac{k_f}{k_r}\int_{\O}e(x,t)s(x,t)dx \\ +6\pa{\frac{\pa{k_c+k_r}\eps_c}{\einf}+k_f\eps_s}\int_{\O}\pa{\sqrt{e(x,t)}-\sqrt{\overline{e(t)}}}^2dx\\
	\end{gathered}
\end{equation}
when all diffusion coefficients are strictly positive which, together with the inequality $\pa{\sqrt{x}-\sqrt{y}}^2\leq h\pa{x|y}$ and the definition of $\m(x)$, shows that
\begin{equation}\nonumber 
	\begin{gathered}
		\frac{d}{dt}\E_{\eps_c,\eps_s,k}(t)  \leq -\int_{\O}\d_{\eps_c,\eps_s}(x,t)dx -\pa{\frac{kk_c}{2}-k_c-k_r-2k_f\eps_s}\int_{\O}\m(x,t)\\
		-\pa{d_e\clsi -6\pa{\frac{\pa{k_c+k_r}}{\einf}+k_f}\max\pa{\eps_c,\eps_s}} \int_{\O }h\pa{e(x,t)|\overline{e(t)}}dx,
		\end{gathered}
\end{equation}
which is the desired inequality in this case.\\
Similarly \eqref{eq:esitmation_on_II_partial_diffusion} will imply that when $d_e=d_c=0$
\begin{equation}\nonumber
	\begin{gathered}
		\frac{d}{dt}\E_{\eps_c,\eps_s,k}(t)  \leq -\int_{\O}\d_{\eps_c,\eps_s}(x,t)dx 
		+ k_f\eps_s\int_{\O}c(x)dx-\frac{k_ck}{2}\int_{\O}\m(x,t)dx,
	\end{gathered}
\end{equation}
	showing the second desired inequality. The proof is thus complete.
\end{proof}
We now have the tools to show our main theorem for this section.
\begin{proof}[Proof of Theorem \ref{thm:entropic_convergence}]
	Following from Theorem \ref{thm:first_entropy_inequality} we see that when all diffusion coefficients are strictly positive \eqref{eq:entropic_inequality} will follow immediately from \eqref{eq:first_entropy_inequality_full_diffusion} if
		\begin{equation}\label{eq:validity_condition_for_full_diffusion}
		\begin{gathered}
			\gamma\E_{\eps_c,\eps_s,k}(t)  \leq \int_{\O}\Big( \d_{\eps_c,\eps_s}(x,t)+ \pa{\frac{kk_c}{2}-k_c-k_r-2k_f\eps_s}\m(x,t)\\
			+\pa{d_e\clsi-6\pa{\frac{\pa{k_c+k_r}}{M_0}+k_f}\max\pa{\eps_c,\eps_s}}  h\pa{e(x,t)|\overline{e(t)}}\Big)dx.
		\end{gathered}
	\end{equation}
and when $d_e=d_c=0$ \eqref{eq:entropic_inequality} will follow immediately from \eqref{eq:first_entropy_inequality_partial_diffusion} if
\begin{equation}\label{eq:validity_condition_for_partial_diffusion}
	\begin{gathered}
		\gamma\E_{\eps_c,\eps_s,k}(t)  \leq \int_{\O}\d_{\eps_c,\eps_s}(x,t)dx 
		+\pa{\frac{k_ck}{2}-k_f\eps_s}\int_{\O}\m(x,t)dx.
	\end{gathered}
\end{equation}
	To show \eqref{eq:validity_condition_for_full_diffusion} and \eqref{eq:validity_condition_for_partial_diffusion} we will use the definition of $\E_{\eps_c,\eps_s,k}$,
\begin{equation}\nonumber
	\begin{split}
		\E_{\eps_c,\eps_s,k}\pa{e,c,s}=& \int_{\O} h(e(x)|e_\infty)dx +\int_{\O} h_{\eps_c}(c(x)|\eps_c)dx +\int_{\O} h_{\eps_s}(s(x)|\eps_s)dx\\
		+&k\int_{\O}\pa{c(x) + \frac {1}{2}\pa{\frac{2k_r+k_c}{k_r}}s(x)}dx,
	\end{split}
\end{equation}
and bound each term in the above expression by terms that appear in the right hand side of \eqref{eq:validity_condition_for_full_diffusion} or \eqref{eq:validity_condition_for_partial_diffusion}.\\
Much like in the proof of Theorem \ref{thm:first_entropy_inequality} we shall drop the $t$ variable from our estimation, showing, as was mentioned in \S\ref{subsec:main}, that the connection between the total entropy and an appropriate production term is of functional nature.\\ 
\underline{The term $h\pa{e(x)|e_\infty}$:}
\begin{itemize}
	\item \textit{All diffusion coefficients are strictly positive.} In this case $\einf=M_0$ and using the identity
	\begin{equation}\label{eq:entropic_identity}
		h(x|y)=h(x|z)+x\log\pa{\frac{z}{y}}+y-z
	\end{equation}
	we see that 
	\begin{equation}\label{eq:estimation_on_h_e_einf_full_diffusion}
		\begin{gathered}
			\int_{\O}h\pa{e(x)|\einf}dx = \int_{\O}\pa{h\pa{e(x)|\overline{e}}+e(x)\log\pa{\frac{\overline{e}}{\einf}}+\einf-\overline{e}}dx \\
			\leq \int_{\O}h\pa{e(x)|\overline{e}}dx + \pa{\einf -\overline{e}}=\int_{\O}h\pa{e(x)|\overline{e}}dx +\int_{\O}c(x)dx\\
			\leq \int_{\O}h\pa{e(x)|\overline{e}}dx +\int_{\O}\m(x)dx,
		\end{gathered}
	\end{equation}
	where we have used the fact that $\overline{e}\leq \einf = \overline{e}+\int_{\O}c(x)dx$.
	\item \textit{$d_e=d_c=0$}. In this case since $e(x)\leq e(x)+c(x)=e_0(x)+c_0(x)=\einf(x)$ we get that 
	$$h\pa{e(x)|\einf(x)}=e(x)\log\pa{\frac{e(x)}{\einf(x)}}-e(x)+\einf(x) \leq c(x)$$
	and as such
		\begin{equation}\label{eq:estimation_on_h_e_einf_partial_diffusion}
		\begin{gathered}
			\int_{\O}h\pa{e(x)|\einf(x)}dx \leq \int_{\O}c(x)dx \leq \int_{\O}\m(x)dx.
		\end{gathered}
	\end{equation}
\end{itemize}
\underline{The term $h_{\eps_c}\pa{c(x)|\eps_c}$:}
\begin{itemize}
	\item \textit{All diffusion coefficients are strictly positive.} In this case using \eqref{eq:entropic_identity} again we find that
	$$\int_{\O}h_{\eps_c}\pa{c(x)|\eps_c}dx=\int_{\br{x\;|\;c(x)\geq \eps_c}}h\pa{c(x)|\eps_c}dx$$
	$$=\int_{\br{x\;|\;c(x)\geq \eps_c}}\pa{h\pa{c(x)|\overline{c_{\eps_c}}}+c(x)\log\pa{\frac{\overline{c_{\eps_c}}}{\eps_c}}+\eps_c-\overline{c_{\eps_c}}}dx.$$
	Since
	$$\eps_c \leq \underbrace{\int_{\O}\max\pa{c(x),\eps_c}dx}_{\overline{c_{\eps_c}}} \leq \int_{\O}c(x)dx + \eps_c\leq M_0+\eps_c$$
	we see that 
	\begin{equation}\label{eq:estimation_on_h_eps_c_full_diffusion}
		\begin{gathered}
			\int_{\O}h_{\eps_c}\pa{c(x)|\eps_c}dx\leq  \int_{\br{x\;|\;c(x)\geq \eps_c}}h\pa{c(x)|\overline{c_{\eps_c}}}dx\\
			+\log\pa{1+\frac{M_0}{\eps_c}}\int_{\br{x\;|\;c(x)\geq \eps_c}}c(x)dx\\
			\leq \frac{1}{d_c\clsi}\int_{\O}\d_{\eps_c,\eps_s}(x)dx + \log\pa{1+\frac{M_0}{\eps_c}}\int_{\O}\m(x)dx.
		\end{gathered}
	\end{equation}
	\item \textit{$d_e=d_c=0$.} In this case we notice that as
	$$c(x) \leq c(x)+e(x)=c_0(x)+e_0(x)=\einf(x)$$
		and since \eqref{eq:eps_conection_partial_diffusion} holds we have that 
		$$\frac{c(x)}{\eps_c(x)}\leq \frac{\einf(x)}{\eps_c(x)}=\frac{k_r}{k_f \eps_s}.$$ 
		Thus
		\begin{equation}\label{eq:estimation_on_h_eps_c_partial_diffusion}
		\begin{gathered}
			\int_{\O}h_{\eps_c}\pa{c(x)|\eps_c}dx = \int_{\br{x\;|\;c(x)\geq \eps_c(x)}}\pa{c(x)\log\pa{\frac{c(x)}{\eps_c(x)}}-c(x)+\eps_c(x)}dx \\
		\leq  \int_{\br{x\;|\;c(x)\geq \eps_c(x)}}c(x)\log\pa{\frac{c(x)}{\eps_c(x)}}dx \leq \log\pa{1+\frac{k_r}{k_f\eps_s }}
		\int_{\O}c(x)dx\\
			\leq \log\pa{1+\frac{k_r}{k_f\eps_s }}\int_{\O}\m(x)dx.
		\end{gathered}
	\end{equation}
\end{itemize}
\underline{The term $h_{\eps_s}\pa{s(x)|\eps_s}$:}\\ 
 Similarly to our previous term we see that using the fact that\footnote{The conservation of mass $$\int_{\O}(s(x,t)+c(x,t)+p(x,t))dx =M_1$$ is valid in both cases.} 
	$$\eps_s \leq \underbrace{\int_{\O}\max\pa{s(x),\eps_s}dx}_{=\overline{s_{\eps_s}}} \leq \int_{\O}s(x)dx + \eps_s\leq M_1+\eps_s$$
	we get that
	\begin{equation}\label{eq:partial_estimation_on_h_eps_s}
		\begin{gathered}
			\int_{\O}h_{\eps_s}\pa{s(x)|\eps_s}dx\leq  \int_{\br{x\;|\;s(x)\geq \eps_s}}h\pa{s(x)|\overline{s_{\eps_s}}}dx+\log\pa{1+\frac{M_1}{\eps_s}}\int_{\O}s(x)dx \\
			\leq \frac{1}{d_s\clsi}\int_{\O}\d_{\eps_c,\eps_s}(x)dx +\log\pa{1+\frac{M_1}{\eps_s}}\int_{\O}s(x)dx.
		\end{gathered}
	\end{equation}
	In order to conclude the above estimation, and estimate the term that is connected to $\M(c,s)$ in $\E_{\eps_c,\eps_s,k}$, we will now bound $\int_{\O}s(x)dx$.
	
	We start by noticing that if $x\geq 8 y$ then 
	$$h(x|y)=x\pa{\log (x) -\log (y)-1}+y \geq x.$$
	As such
	\begin{equation}\label{b4}
		\begin{aligned}
			\int_{\br{x\;|\; s(x) \geq 8 \overline{s_{\eps_s}}}}s(x)dx &\leq \int_{\br{x\;|\; s(x) \geq 8 \overline{s_{\eps_s}}}}h\pa{s(x)|\overline{s_{\eps_s}}}dx\leq  \int_{\br{x\;|\;s(x)\geq \eps_s}}h\pa{s(x)|\overline{s_{\eps_s}}}dx\\
			&\leq \frac{1}{d_s\clsi}\int_{\O}\d_{\eps_c,\eps_s}(x)dx
		\end{aligned}
	\end{equation}
	To deal with the case where $s(x)\leq 8 \overline{s_{\eps_s}}$ we will need to consider our two cases separately.
	\begin{itemize}
		\item \textit{All diffusion coefficients are strictly positive.} In this case we need to consider two options:
		\begin{itemize}
			\item If $e(x) \leq \frac{\einf}{2}$ then as $h(x|y)$ is decreasing on $[0,y)$ we have that
			$$\min_{x\in \rpa{0,\frac{y}{2}}}h(x|y)=h\pa{\frac{y}{2}\Big| y}=\frac{\pa{1-\log (2)}y}{2}.$$
			and as such
			\begin{equation}\label{b5}
				\begin{aligned}
					\int_{\br{x\;|\; s(x) < 8 \overline{s_{\eps_s}}\wedge e(x)\leq \frac{\einf}{2}}}s(x)dx\leq \frac{16\overline{s_{\eps_s}}}{\pa{1-\log(2)}\einf}\int_{\br{x\;|\; s(x) < 8 \overline{s_{\eps_s}}\wedge e(x)\leq \frac{\einf}{2}}}h\pa{e(x)|\einf}dx\\
					\leq \frac{16\pa{\eps_s+M_1}}{\pa{1-\log(2)}\einf}\int_{\Omega}h\pa{e(x)|\einf}dx \leq \frac{16\pa{\eps_s+M_1}}{\pa{1-\log(2)}\einf}\pa{\int_{\O}h\pa{e(x)|\overline{e}}dx +\int_{\O}\m(x)dx}
				\end{aligned}
			\end{equation}
			where we have used \eqref{eq:estimation_on_h_e_einf_full_diffusion}.
			\item If $e(x) >\frac{\einf}{2}$ then
			\begin{equation}\label{b6}
				\int_{\br{x\;|\; s(x) < 8 \overline{s_{\eps_s}}\wedge e(x)>\frac{\einf}{2}}}s(x)dx \leq \frac{2}{\einf}\int_{\br{x\;|\; s(x) < 8 \overline{s_{\eps_s}}\wedge  e(x)\leq \frac{\einf}{2}}}e(x)s(x)
				\leq \frac{2k_r}{k_f\einf}\int_{\O}\m(x)dx.
			\end{equation}
		\end{itemize}
		Thus
		\begin{equation}\label{eq:estimation_of_integral_of_s_on_O_full_diffusion}
			\begin{split}
				\int_{\O}s(x)dx \leq \frac{1}{d_s\clsi}\int_{\O}&\d_{\eps_c,\eps_s}(x)dx +\frac{16\pa{\eps_s+M_1}}{\pa{1-\log(2)}\einf}\int_{\Omega}h\pa{e(x)|\overline{e}}dx\\
				&+ \pa{\frac{2k_r}{k_f\einf}+\frac{16\pa{\eps_s+M_1}}{\pa{1-\log(2)}\einf}}\int_{\O}\m(x)dx.
			\end{split}
		\end{equation}
	\item \textit{$d_e=d_c=0$.} The same options as in the first case need to be considered. The exact same calculation, together with condition \eqref{eq:lower_bound_for_einf}  and \eqref{eq:estimation_on_h_e_einf_partial_diffusion}, show that\footnote{When $e(x)\leq \frac{\einf(x)}{2}$ we have that
	$$1 \leq \frac{2h\pa{e(x)|\einf(x)}}{\pa{1-\log\pa{2}}\einf(x)} \leq \frac{2h\pa{e(x)|\einf(x)}}{\pa{1-\log\pa{2}}\beta} $$.}
			\begin{equation}\label{b5_partial}
		\begin{aligned}
			\int_{\br{x\;|\; s(x) < 8 \overline{s_\eps}\wedge e(x)\leq \frac{\einf(x)}{2}}}s(x)dx 
			\leq \frac{16\pa{\eps_s+M_1}}{\pa{1-\log(2)}\beta}\int_{\Omega}h\pa{e(x)|\einf(x)}dx \\
			\leq \frac{16\pa{\eps_s+M_1}}{\pa{1-\log(2)}\beta}\int_{\O}\m(x)dx.
		\end{aligned}
	\end{equation} 
	and 
		\begin{equation}\label{b6_partial}
		\int_{\br{x\;|\; s(x) < 8 \overline{s_{\eps_s}}\wedge e(x)>\frac{\einf(x)}{2}}}s(x)dx \leq \frac{2}{\beta}\int_{\br{x\;|\; s(x) < 8 \overline{s_{\eps_s}}\wedge e(x)> \frac{\beta}{2}}}e(x)s(x)
		\leq \frac{2k_r}{k_f\beta}\int_{\O}\m(x)dx,
	\end{equation}
from which we find that
\begin{equation}\label{eq:estimation_of_integral_of_s_on_O_partial_diffusion}
\begin{split}
	\int_{\O}s(x)dx \leq \frac{1}{d_s\clsi}\int_{\O}\d_{\eps_c,\eps_s}(x)dx + \pa{\frac{2k_r}{k_f\beta}+\frac{16\pa{\eps_s+M_1}}{\pa{1-\log(2)}\beta}}\int_{\O}\m(x)dx.
\end{split}
\end{equation}
	\end{itemize}

Combining \eqref{eq:estimation_on_h_e_einf_full_diffusion}, \eqref{eq:estimation_on_h_eps_c_full_diffusion}, \eqref{eq:partial_estimation_on_h_eps_s} and \eqref{eq:estimation_of_integral_of_s_on_O_full_diffusion} with the definition of $\E_{\eps_c,\eps_s,k}$ and the facts that $\einf=M_0$ when all diffusion coefficients are strictly positive and $c(x)\leq \m(x)$ we find that
\begin{equation}\nonumber 
	\begin{gathered}
			\E_{\eps_c,\eps_s,k}\pa{e,c,s} \leq \pa{1+\pa{\log\pa{1+\frac{M_1}{\eps_s}}+\frac{k\pa{2k_r+k_c}}{2k_r}}\frac{16\pa{\eps_s+M_1}}{\pa{1-\log(2)}M_0}}\int_{\O}h\pa{e(x)|\overline{e}}dx\\ 
			\pa{1+k+\log\pa{1+\frac{M_0}{\eps_c}}+\pa{\log\pa{1+\frac{M_1}{\eps_s}}+\frac{k\pa{2k_r+k_c}}{2k_r}}\pa{\frac{2k_r}{k_fM_0}+\frac{16\pa{\eps_s+M_1}}{\pa{1-\log(2)}M_0}}}
			\int_{\O}\m(x)dx\\
			+\frac{1}{\clsi}\pa{\frac{1}{d_c}+\frac{1}{d_s}\pa{1+\log\pa{1+\frac{M_1}{\eps_s}}+\frac{k\pa{2k_r+k_c}}{2k_r}}}\int_{\O}\d_{\eps_c,\eps_s}(x)dx.
	\end{gathered}
\end{equation}
when all diffusion coefficients are strictly positive, and 
\begin{equation}\nonumber 
	\begin{gathered}
		\E_{\eps_c,\eps_s,k}\pa{e,c,s} \leq \\ 
		\pa{1+k+\log\pa{1+\frac{k_r}{k_f\eps_s}}+\pa{\log\pa{1+\frac{M_1}{\eps_s}}+\frac{k\pa{2k_r+k_c}}{2k_r}}\pa{\frac{2k_r}{k_f\beta}+\frac{16\pa{\eps_s+M_1}}{\pa{1-\log(2)}\beta}}}
		\int_{\O}\m(x)dx\\
		+\frac{1}{d_s\clsi}\pa{1+\log\pa{1+\frac{M_1}{\eps_s}}+\frac{k\pa{2k_r+k_c}}{2k_r}}\int_{\O}\d_{\eps_c,\eps_s}(x)dx.
	\end{gathered}
\end{equation}
when $d_e=d_c=0$. 

Thus, \eqref{eq:validity_condition_for_full_diffusion} is satisfied when
$$\gamma \leq \frac{\pa{d_e\clsi-6\pa{\frac{\pa{k_c+k_r}}{M_0}+k_f}\max\pa{\eps_c,\eps_s}} }{\pa{1+\pa{\log\pa{1+\frac{M_1}{\eps_s}}+\frac{k\pa{2k_r+k_c}}{2k_r}}\frac{16\pa{\eps_s+M_1}}{\pa{1-\log(2)}M_0}}}$$
and
$$\gamma \leq \frac{\frac{kk_c}{2}-k_c-k_r-2k_f\eps_s}{\pa{1+k+\log\pa{1+\frac{M_0}{\eps_c}}+\pa{\log\pa{1+\frac{M_1}{\eps_s}}+\frac{k\pa{2k_r+k_c}}{2k_r}}\pa{\frac{2k_r}{k_fM_0}+\frac{16\pa{\eps_s+M_1}}{\pa{1-\log(2)}M_0}}}}$$
and
$$\gamma\leq \frac{d_c  d_s \clsi }{d_s+d_c\pa{1+\log\pa{1+\frac{M_1}{\eps_s}}+\frac{k\pa{2k_r+k_c}}{2k_r}}},$$
which yields the expression \eqref{eq:conditions_gamma_full_diffusion}, and \eqref{eq:validity_condition_for_partial_diffusion} is satisfied when
$$\gamma \leq \frac{\frac{kk_c}{2}-k_f\eps_s}{\pa{1+k+\log\pa{1+\frac{k_r}{k_f\eps_s}}+\pa{\log\pa{1+\frac{M_1}{\eps_s}}+\frac{k\pa{2k_r+k_c}}{2k_r}}\pa{\frac{2k_r}{k_f\beta}+\frac{16\pa{\eps_s+M_1}}{\pa{1-\log(2)}\beta}}}}$$
and 
$$\gamma\leq \frac{ d_s \clsi }{1+\log\pa{1+\frac{M_1}{\eps_s}}+\frac{k\pa{2k_r+k_c}}{2k_r}},$$
which yields the expression \eqref{eq:conditions_gamma_partial_diffusion}. This completes the proof.
\end{proof}
With the entropic investigation complete, we can now turn our attention to the $L^\infty$ convergence.

\section{Convergence to Equilibrium}\label{sec:unfiorm}
In this section we will explore how one can use the properties of our system of equations, \eqref{Sys}, to bootstrap the entropic convergence found in Theorem \ref{thm:entropic_convergence} to a uniform one. To do so we start with a couple of theorems that guarantee an existence of non-negative bounded solutions to our system.

%

\begin{theorem}\label{thm:existence_result_full}
	Assume that $\Omega\subset \R^n$ is a bounded, open domain with $C^{2+\zeta}, \zeta>0$ boundary $\partial\Omega$. Assume in addition that all the diffusion coefficients, $d_e, d_s, d_c, d_p$, are strictly positive. Then for any non-negative, bounded initial data $e_0$, $s_0$, $c_0$ and $p_0$, there exists a unique global non-negative, classical solution to \eqref{Sys} which is uniformly bounded in time, i.e. there exists a constant $\su>0$ such that
	\begin{equation*}
		\sup_{t\geq 0}\pa{\norm{e(t)}_{\LO{\infty}} + \norm{c(t)}_{\LO{\infty}} + \norm{s(t)}_{\LO{\infty}} + \norm{p(t)}_{\LO{\infty}}} \leq \su.
	\end{equation*}	
\end{theorem}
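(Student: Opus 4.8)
I would argue in three stages --- local classical well-posedness, invariance of the non-negative cone, and global $\LO\infty$ bounds --- and then add a separate step that turns the global bound into a time-uniform one. For local existence, note that the reaction terms in \eqref{Sys} are polynomial, hence locally Lipschitz on bounded sets, while the diffusion part is diagonal with strictly positive constant coefficients; together with the $C^{2+\zeta}$ regularity of $\p\O$ this is covered by Amann's theory of quasilinear parabolic systems, or alternatively by a Banach fixed-point argument in $C([0,\tau];\LO\infty)$ based on the Neumann heat semigroups $e^{td_e\Delta},\dots,e^{td_p\Delta}$ and their $\LO p$--$\LO q$ smoothing. One obtains a unique maximal classical solution on $[0,T_{\max})$ which is global unless $\limsup_{t\uparrow T_{\max}}\bigl(\norm{e(t)}_{\LO\infty}+\norm{s(t)}_{\LO\infty}+\norm{c(t)}_{\LO\infty}+\norm{p(t)}_{\LO\infty}\bigr)=\infty$.

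\textbf{Non-negativity and $\LO1$ control.} The system is quasi-positive: whenever one concentration vanishes and the others are non-negative, its reaction term is $\ge0$ (for instance $-k_fes+(k_r+k_c)c=(k_r+k_c)c\ge0$ when $e=0$, and similarly for $s$, $c$, $p$). Since $e_0,s_0,c_0,p_0\ge0$, the scalar maximum principle applied successively --- treating $-k_fes$ as a bounded zeroth-order coefficient once local bounds are at hand --- keeps $e,s,c,p\ge0$ on $[0,T_{\max})$. Integrating \eqref{Sys} over $\O$ and using the Neumann conditions yields the conservation laws \eqref{law1}--\eqref{law2}, hence the \emph{time-uniform} estimates $\norm{e(t)}_{\LO1},\norm{s(t)}_{\LO1},\norm{c(t)}_{\LO1},\norm{p(t)}_{\LO1}\le\max(M_0,M_1)$. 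Writing $R_e,R_s,R_c,R_p$ for the four reaction terms, one checks the mass-control identity $R_e+R_s+2R_c+R_p\equiv0$, so $z:=e+s+2c+p$ solves $\p_t z=\Delta\pa{d_e e+d_s s+2d_c c+d_p p}=\Delta(\theta z)$, where $\theta(x,t)$ is a convex combination of $d_e,d_s,d_c,d_p$ and thus lies between two positive constants. The standard $L^2$ duality lemma for equations of this type then upgrades the $\LO1$ control to $z\in L^2\pa{(\tau,T)\times\O}$, and hence $e,s,c,p$ lie in $L^2$ locally in time with values in $L^2(\O)$.

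\textbf{From $L^2$ to $\LO\infty$ and uniformity in time.} The only nonlinearity, $k_fes$, is quadratic, and the identity above is exactly the mass-control hypothesis under which quadratic reaction--diffusion systems are known to admit global bounded classical solutions in every space dimension. Concretely, one bootstraps: feeding the $L^2$ information into each scalar equation and combining parabolic $\LO p$--$\LO q$ smoothing with H\"older's inequality on the bilinear source $es$, the integrability of $e,s,c$ is raised in finitely many steps up to $\LO\infty$ on every finite interval, after which $\p_t p-d_p\Delta p=k_c c$ gives the bound on $p$; consequently $T_{\max}=\infty$. Since the $\LO1$ bounds from the conservation laws do not grow in time, repeating the same iteration on the unit windows $[t-1,t]$ for $t\ge1$ --- and, for large $t$, absorbing the bilinear term with the help of the decay of $\int_\O\pa{s+c}$ provided by Theorem \ref{thm:entropic_convergence} (whose hypotheses only require the boundedness on finite intervals just established) --- produces a constant $\su$, depending only on $\O$, the rate constants and $\norm{(e_0,s_0,c_0,p_0)}_{\LO\infty}$, with $\sup_{t\ge0}\pa{\norm{e(t)}_{\LO\infty}+\norm{c(t)}_{\LO\infty}+\norm{s(t)}_{\LO\infty}+\norm{p(t)}_{\LO\infty}}\le\su$.

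\textbf{Main obstacle.} The heart of the matter is the last stage: a generic quadratic system with only $\LO1$/mass control may blow up, so the argument must genuinely use the structure of \eqref{ER} --- the single bilinear term and the positive combination $e+s+2c+p$ whose net reaction vanishes. The second subtlety is making the resulting $\LO\infty$ bound \emph{uniform in time}, which forces one to keep the bootstrap constants from deteriorating as $t\to\infty$; this is precisely where the time-uniform $\LO1$ bounds, and on long time scales the entropic decay of the substrate and complex masses, enter.
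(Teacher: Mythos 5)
Your route is genuinely different from the paper's. The paper disposes of this theorem in a few lines by verifying the hypotheses of a black-box result, \cite[Theorem 1.2]{morgan2020boundedness}: quasi-positivity, a linear Lyapunov combination (there $h_e(e)=e$, $h_s(s)=s$, $h_c(c)=2c$, $h_p(p)=p$, i.e.\ exactly your combination $e+s+2c+p$), and an \emph{intermediate sum condition} encoded by a lower-triangular matrix, whose content here is that $R_e$, $R_e+R_c$ and $R_p$ are bounded by \emph{linear} expressions (indeed $R_e+R_c\equiv 0$, so $e+c$ diffuses with no net reaction). That last piece of structure is what lets Morgan--Tang conclude global existence \emph{and} uniform-in-time $\LO{\infty}$ bounds in one stroke. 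Your self-contained sketch correctly identifies the same structural facts (quasi-positivity, the conservation laws, the vanishing of $R_e+R_s+2R_c+R_p$), and a reader gains from seeing the mechanism spelled out rather than delegated to a citation; the price is that you must actually carry out the hard analytic step that the citation hides.

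That step is where your argument has a genuine gap. The duality lemma applied to $\p_t z=\Delta(\theta z)$ gives $z\in L^2\pa{(0,T)\times\O}$, hence $es\in L^1\pa{(0,T)\times\O}$, and from an $L^1$ space-time source the Neumann heat semigroup does \emph{not} return enough integrability to start a closing iteration when $n\geq 3$: the ``finitely many steps of $\LO{p}$--$\LO{q}$ smoothing plus H\"older'' you describe stalls at the first step. The known proofs that quadratic mass-controlled systems are globally bounded in all dimensions (Caputo--Goudon--Vasseur, Fellner--Morgan--Tang) need either the improved $L^{2+\eps}$ duality estimate obtained from maximal regularity, or precisely the intermediate-sum structure above (bound $e+c$ first via duality in every $\LO{p}$ since its net reaction vanishes, then treat $s$ by comparison since $\p_t s-d_s\Delta s\leq k_r c$, then $p$). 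Either fix is available here, but as written the bootstrap is asserted rather than proved. Your final windowing step also needs care: the duality constant on $[t-1,t]$ depends on $\norm{z(t-1)}_{\LO{2}}$, not $\LO{1}$, so you must first record that the unit-window smoothing constant depends only on the (time-uniform) $\LO{1}$ data; once that is done the appeal to Theorem \ref{thm:entropic_convergence} is legitimate but not actually needed --- and note the paper's logical order is the reverse (uniform boundedness is established first and then fed into the entropy method), so it is cleaner not to lean on the entropic decay at all in this proof.
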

\begin{proof}
	The theorem follows from \cite[Theorem 1.2]{morgan2020boundedness}. Indeed, we will check that all assumptions in \cite[Theorem 1.2]{morgan2020boundedness} are satisfied. For the system \eqref{Sys}, assumptions (A1) and (A2) are immediate, assumption (A3) is fulfilled with
	$$h_e(e) = e,\quad h_s(s) = s,\quad h_c(c) = 2c,\quad h_p(p) = p.$$
	By using the lower triangle matrix
	\begin{equation*}
		A = \begin{pmatrix}
			1 & 0 & 0 & 0\\
			1 & 0 & 0 & 0\\
			1 & 0 & 1 & 0\\
			0 & 0 & 0 & 1
		\end{pmatrix},
	\end{equation*}
	assumption (A4) is valid with a linear intermediate sum condition, i.e. $r = 1$. Finally, assumption (A5) is satisfied for $\mu=2$. The fact that $r = 1$ and \cite[Remark 1.2]{morgan2020boundedness}, condition (8) in \cite[Theorem 1.2]{morgan2020boundedness} is satisfied, which guarantees the desired global result and uniform boundedness.
\end{proof}
\begin{theorem}\label{thm:existence_result_degenerate}
	Assume that $\Omega\subset \R^n$ is a bounded, open domain with $C^{2+\zeta}, \zeta>0$ boundary $\partial\Omega$. Assume in addition that $d_s>0, d_p>0$ and $d_e = d_c = 0$. Then for any non-negative, bounded initial data $e_0$, $s_0$, $c_0$ and $p_0$, there exists a unique global non-negative, strong solution to \eqref{Sys} which is uniformly bounded in time, i.e. there exists a constant $\su>0$ such that
	\begin{equation*}
	\sup_{t\geq 0}\pa{\norm{e(t)}_{\LO{\infty}} + \norm{c(t)}_{\LO{\infty}} + \norm{s(t)}_{\LO{\infty}} + \norm{p(t)}_{\LO{\infty}}} \leq \su.
	\end{equation*}	
\end{theorem}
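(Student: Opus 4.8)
The plan is to construct the solution locally by a fixed point argument that treats $e$ and $c$ as ODEs in time (reflecting $d_e=d_c=0$) and $s,p$ via the Neumann heat semigroup, to make it global using the pointwise conservation of $e+c$, and finally to upgrade the a priori bounds on the diffusing unknowns $s,p$ — which at first grow only linearly in $t$ — to time-independent ones by exploiting the global mass law \eqref{law2}.

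\emph{Local existence, uniqueness, regularity.} Since $d_e=d_c=0$, for a.e.\ fixed $x$ the equations for $e,c$ form a linear ODE system in $t$ with coefficients depending on $s(x,\cdot)$, while for given $e,c$ the $s$-equation is a semilinear heat equation with homogeneous Neumann data. I would therefore define a map $\Phi$ on $X_\tau:=C\big([0,\tau];\LO\infty\big)$ by: given a non-negative $s\in X_\tau$, solve (for a.e.\ $x$) $\p_t e=-k_fes+(k_r+k_c)c$, $\p_t c=k_fes-(k_r+k_c)c$ with data $(e_0,c_0)$ — a unique solution that automatically satisfies $e+c=e_0+c_0$, $e,c\ge0$, and $\|e(t)\|_{\LO\infty}+\|c(t)\|_{\LO\infty}\le\|e_0+c_0\|_{\LO\infty}$ — and then let $\Phi(s)$ be the mild solution of $\p_t\Phi(s)-d_s\Delta\Phi(s)=-k_fes+k_rc$, $\Phi(s)(0)=s_0$. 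Both intermediate maps being Lipschitz on bounded sets (the reaction terms are bilinear), $\Phi$ is a contraction on a small ball of $X_\tau$, whose fixed point is a unique local non-negative solution; $p$ is then recovered, decoupled, as the mild solution of $\p_tp-d_p\Delta p=k_cc$. Extending to a maximal interval $[0,T_{\max})$ and applying parabolic $L^q$-maximal regularity to $s,p$ on intervals $(\tau,T)$ — with right-hand sides in $\LO\infty\subset\LO q$ — together with the $C^1$-in-$t$ regularity of $e,c$, yields a solution that is strong in the sense of Remark \ref{rem:strong_solution}.

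\emph{Global existence.} Adding the $e$- and $c$-equations and using $d_e=d_c=0$ gives $\p_t(e+c)=0$, i.e.\ $e(x,t)+c(x,t)=e_0(x)+c_0(x)$; combined with non-negativity this gives $0\le e(x,t),c(x,t)\le K:=\|e_0+c_0\|_{\LO\infty}$ for all $t<T_{\max}$. Hence $\p_ts-d_s\Delta s\le k_rK$ and $\p_tp-d_p\Delta p\le k_cK$, so by comparison $\|s(t)\|_{\LO\infty}\le\|s_0\|_{\LO\infty}+k_rKt$ and $\|p(t)\|_{\LO\infty}\le\|p_0\|_{\LO\infty}+k_cKt$. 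In particular the $\LO\infty$-norm stays finite on bounded time intervals, so $T_{\max}=\infty$.

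\emph{Uniform-in-time bound — the main difficulty.} The linear growth above must be removed, and this is where the closedness of the system enters: from \eqref{law2} and $s,c,p\ge0$ we get $\sup_{t\ge0}\big(\|s(t)\|_{\LO1}+\|c(t)\|_{\LO1}+\|p(t)\|_{\LO1}\big)\le M_1$. I would then run a Moser/Alikakos $L^q$-iteration for $s$: testing its equation against $s^{q-1}$, discarding $-k_f\int_\O es^q\le0$, using $c\le K$ to bound the source by $k_rK\|s\|_{\LO q}^{q-1}$, and estimating the diffusive dissipation $\tfrac{4d_s(q-1)}{q^2}\|\na s^{q/2}\|_{L^2(\O)}^2$ from below by Gagliardo--Nirenberg in terms of $\|s\|_{\LO q}$ and $\|s\|_{\LO{q/2}}$, one obtains a differential inequality with sublinear right-hand side and superlinear dissipation in $\|s\|_{\LO q}^q$; given a uniform bound on $\|s\|_{\LO{q/2}}$ this forces $\sup_{t\ge0}\|s(t)\|_{\LO q}<\infty$, and iterating over $q=2^j$ — with $\sup_t\|s\|_{\LO1}\le M_1$ as the base case — gives $\sup_{t\ge0}\|s(t)\|_{\LO\infty}<\infty$. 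The identical argument for $p$ (source $k_cc$ with $\|c\|_{\LO\infty}\le K$, and $\|p\|_{\LO1}\le M_1$) gives $\sup_{t\ge0}\|p(t)\|_{\LO\infty}<\infty$. Together with the pointwise bound $K$ on $e,c$ this yields the constant $\su$. I expect this last step to be the real obstacle, since the crude comparison principle is hopeless here and one genuinely has to convert the uniform $\LO1$-control coming from \eqref{law2} into a pointwise bound through the smoothing of the $d_s$- and $d_p$-diffusion; the earlier steps are routine.
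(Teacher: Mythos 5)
Your overall strategy coincides with the paper's: a fixed-point construction of a local strong solution, globalisation via the pointwise conservation law $e+c=e_0+c_0$, and conversion of the uniform $L^1$ control coming from \eqref{law2} into a uniform-in-time $L^\infty$ bound for $s$ and $p$ through the smoothing of the $d_s$- and $d_p$-diffusion. Two implementations differ. First, you iterate only in $s$, solving the $(e,c)$ ODEs exactly at each step, whereas the paper iterates on the full quadruple $(e,s,c,p)$ via Duhamel formulas; your decomposition is arguably more economical. Second, for the uniform bound you run a Moser/Alikakos iteration over $q=2^j$ all the way to $L^\infty$; the paper instead closes the $L^q$ energy estimate for each \emph{fixed} $q$ by interpolating $\|s\|_{\LO{q}}$ between $\|s\|_{\LO{q_0}}$ (with $q_0=n^\ast q/2$ from the Sobolev embedding of $H^1$) and the conserved $\|s\|_{\LO{1}}$, stops at a single $q>n/2$, and upgrades to $L^\infty$ with the heat-semigroup smoothing estimate of Lemma \ref{lem:heat_equation_regularisation}. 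Your route works but requires the Alikakos-type bookkeeping of constants as $q\to\infty$, which the paper's shortcut avoids.

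The one substantive gap is the non-negativity argument, which as written is circular. Invariance of the cone $\{e\ge 0,\ c\ge 0\}$ under your $(e,c)$ ODE system uses $s\ge 0$ (at $c=0$ one needs $\p_t c=k_fes\ge 0$), so you restrict the iteration space to non-negative $s$; but $\Phi(s)$ solves a heat equation with source $-k_fes+k_rc$, which is genuinely negative wherever $c$ is small and $es$ is not, so $\Phi$ does not map the non-negative cone into itself away from the fixed point (take $s_0$ vanishing on part of $\O$). Conversely, proving $s\ge 0$ a posteriori at the fixed point by testing with $-s_-$ requires $c\ge 0$ in order to discard the term $-k_r c\, s_-$, and also note that your bound $\|e(t)\|_{\LO{\infty}}+\|c(t)\|_{\LO{\infty}}\le \|e_0+c_0\|_{\LO{\infty}}$, used in the contraction estimates, itself presupposes $e,c\ge 0$. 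The standard repair --- and the one the paper uses --- is to replace every occurrence of the unknowns in the nonlinearities by their positive parts, obtain a fixed point of the truncated system (the paper's map $\G$), and then show that all components of that fixed point are simultaneously non-negative, since each truncated equation has the quasi-positive form $\p_t f-d\Delta f=g-\alpha f_+$ with $g,\alpha\ge 0$. With that modification your argument closes.
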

\begin{proof}
	The proof is a fairly standard fixed point argument. As such, we defer it to Appendix \ref{secapp:additional_proofs}.
\end{proof}

With these existence theorems at hand, we can now prove our main results: Theorem \ref{thm:main} and Theorem \ref{thm2}. Before we do so, however, we shall state the following lemmas, whose proofs we leave to Appendix \ref{secapp:additional_proofs}:
\begin{lemma}\label{lem:heat_equation_regularisation}
	Assume that $\Omega\subset \R^n$, $n\geq 1$, is a bounded, open domain with $C^{2+\zeta},\zeta>0,$ boundary. Let $u(x,t)$ be a strong solution to the inhomogeneous heat equation with Neumann conditions 
	\begin{equation}\nonumber 
			\begin{cases}
			\partial_t u(x,t)-d\Delta u(x,t) =f(x,t)& x\in\O,\; t>0\\
			u(x,0)=u_0(x) & x\in\O\\
			\p_\nu u(x,t) = 0, &x\in\p\Omega, t>0.
		\end{cases} 
	\end{equation}
Then, if there exists $p>\frac{n}{2}$ such that 
\begin{equation} \label{eq:heat_conditions}
	\begin{gathered}
		\max\pa{\norm{u(t)}_{\LO{p}},\norm{f(t)}_{\LO{p}}} \leq \mathcal{C} e^{-\delta t}, \\
		\sup_{t\in[0,1]}\norm{u(t)}_{\LO{\infty}}\leq \su,
	\end{gathered}
\end{equation}
then there exist explicit constant $C_{d,n,p}$ that depends only on $\O$, $d$, $n$ and $p$ such that
\begin{equation}\label{eq:heat_L_infty_exponential_regulasiation}
	\norm{u(t)}_{\LO{\infty}} \leq e^{\delta}\max\pa{\su , \mathcal{C}C_{d,n,p}}e^{-\delta t}.
\end{equation}
\end{lemma}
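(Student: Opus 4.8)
The plan is to write $u$ via the Duhamel (variation-of-constants) formula for the Neumann heat semigroup on $\O$ and to use the ultracontractive smoothing of that semigroup to convert the $\LO{p}$ decay of $u$ and $f$ into $\LO{\infty}$ decay. Denote by $\pa{e^{\tau d\Delta}}_{\tau\geq 0}$ the semigroup generated by $d\Delta$ with homogeneous Neumann boundary conditions. Since $\p\O$ is $C^{2+\zeta}$, the Neumann heat kernel on $\O$ satisfies Gaussian upper bounds, and hence the ultracontractivity estimate
\begin{equation}\nonumber
	\norm{e^{\tau d\Delta}g}_{\LO{\infty}}\leq C_{d,n,p}\,\tau^{-\frac{n}{2p}}\,\norm{g}_{\LO{p}},\qquad 0<\tau\leq 1,
\end{equation}
holds for all $g\in\LO{p}$, with $C_{d,n,p}$ depending only on $\O$, $d$, $n$ and $p$ (the dependence on $d$ being harmless, as $e^{\tau d\Delta}=e^{(d\tau)\Delta}$). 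This is the only mapping property of the semigroup the argument will need, since all semigroup times occurring below lie in $(0,1]$.

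First I would dispose of $t\in[0,1]$, where there is nothing to prove: $\norm{u(t)}_{\LO{\infty}}\leq\su$ by hypothesis and $e^{\delta}e^{-\delta t}=e^{\delta(1-t)}\geq 1$, so $\norm{u(t)}_{\LO{\infty}}\leq\su\, e^{\delta(1-t)}\leq e^{\delta}\max\pa{\su,\mathcal{C}C_{d,n,p}}e^{-\delta t}$. For $t\geq 1$ I would apply Duhamel over the window $[t-1,t]$; for strong solutions in the sense of Remark~\ref{rem:strong_solution} this gives the mild identity
\begin{equation}\nonumber
	u(t)=e^{d\Delta}u(t-1)+\int_{t-1}^{t}e^{(t-s)d\Delta}f(s)\,ds .
\end{equation}
Taking $\LO{\infty}$ norms, bounding the first term by $C_{d,n,p}\norm{u(t-1)}_{\LO{p}}\leq C_{d,n,p}\mathcal{C}e^{\delta}e^{-\delta t}$ and the integrand by $C_{d,n,p}(t-s)^{-\frac n{2p}}\norm{f(s)}_{\LO{p}}\leq C_{d,n,p}\mathcal{C}(t-s)^{-\frac n{2p}}e^{-\delta s}$, and substituting $\sigma=t-s$ in the resulting integral, one gets
\begin{equation}\nonumber
	\norm{u(t)}_{\LO{\infty}}\leq C_{d,n,p}\,\mathcal{C}\,e^{\delta}e^{-\delta t}+C_{d,n,p}\,\mathcal{C}\,e^{-\delta t}\int_{0}^{1}\sigma^{-\frac n{2p}}e^{\delta\sigma}\,d\sigma .
\end{equation}
Because $p>\tfrac n2$ we have $\tfrac n{2p}<1$, so the last integral is finite (at most $e^{\delta}\,(1-\tfrac n{2p})^{-1}$); absorbing this numerical factor and the factor from the first term into $C_{d,n,p}$ yields $\norm{u(t)}_{\LO{\infty}}\leq \mathcal{C}\,C_{d,n,p}\,e^{\delta}e^{-\delta t}$ for $t\geq 1$, which together with the case $t\in[0,1]$ is precisely \eqref{eq:heat_L_infty_exponential_regulasiation}.

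The two norm bounds and the elementary integral are routine. The part that needs genuine care is setting up the semigroup framework: justifying that a strong solution in the sense of Remark~\ref{rem:strong_solution} satisfies the mild formulation, and citing the Gaussian/ultracontractivity bound for the Neumann heat semigroup on a $C^{2+\zeta}$ domain with the claimed structure of the constant. I expect this --- though classical --- to be the main obstacle; if one wishes to avoid heat-kernel estimates altogether, the required $\LO{p}$--$\LO{\infty}$ smoothing can instead be produced by a Moser-type iteration or by parabolic $L^p$ maximal regularity combined with Sobolev embedding, at the price of a longer argument.
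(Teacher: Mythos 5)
Your argument is correct and follows essentially the same route as the paper's proof: the $L^p$--$L^\infty$ ultracontractivity bound for the Neumann heat semigroup on a unit time window (the paper cites \cite[Theorem 3.2.9]{Davies89}), Duhamel over $[t-1,t]$, the integrability of $\sigma^{-n/(2p)}$ near $0$ from $p>\tfrac n2$, and the trivial bound on $[0,1]$. The only cosmetic difference is that the paper shifts time as $t\mapsto t+1$ rather than integrating over $[t-1,t]$, and it likewise takes the mild formulation for granted rather than re-deriving it for strong solutions.
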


\begin{lemma}\label{lem:heat_equation_avergae_convergence}
	Assume that $\Omega\subset \R^n$, $n\geq 1$, is a bounded, open domain with $C^{2+\zeta}, \zeta>0$, boundary. Let $u(x,t)$ be a strong solution to the inhomogeneous heat equation with Neumann conditions 
	\begin{equation}\nonumber
		\begin{cases}
			\partial_t u(x,t)-d\Delta u(x,t) =f(x,t)& x\in\O,\; t>0\\
			u(x,0)=u_0(x) & x\in\O\\
			\p_\nu u(x,t) = 0, &x\in\p\Omega, t>0.
		\end{cases} 
	\end{equation}
	Then for any $\eps>0$ we have that 
	\begin{equation}\label{eq:L_2_convergence}
		\begin{gathered}
			\norm{u(t)-\overline{u}(t)}_{\LO{2}}^2 \leq e^{-\frac{2d}{C_P}\pa{1-\eps}t}\norm{u_0-\overline{u_0}}_{\LO{2}}^2\\
			+\frac{C_Pe^{-\frac{2d}{C_P}\pa{1-\eps}t}}{2d\eps}\int_{0}^t e^{\frac{2d}{C_P}\pa{1-\eps} s}\norm{f(s)-\overline{f}(s)}^2_{\LO{2}}ds, 
		\end{gathered}
	\end{equation}
where $\overline{g}=\int_{\O}g(x)dx$ and $C_P$ is the Poincar\'e constant associated to the domain, i.e. the positive constant for which
\begin{equation}\label{eq:poincare}
	\norm{f-\overline{f}}_{\LO{2}}\leq C_P \norm{\na f}_{\LO{2}},
\end{equation}
for any $f\in H^1\pa{\O}$. 
\end{lemma}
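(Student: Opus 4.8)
The plan is to reduce the statement to a Grönwall estimate for the $L^2$ norm of the fluctuation $w(x,t):=u(x,t)-\overline{u}(t)$. First I would integrate the equation over $\O$ and use the homogeneous Neumann condition to annihilate the Laplacian term, obtaining $\frac{d}{dt}\overline{u}(t)=\overline{f}(t)$. Subtracting this (which is spatially constant) from the original equation shows that $w$ solves $\p_t w-d\Delta w=f-\overline{f}$ with homogeneous Neumann conditions and, crucially, satisfies $\overline{w}(t)=0$ for all $t>0$. Note that $\p_\nu w=\p_\nu u=0$ on $\p\O$ since $\overline u(t)$ does not depend on $x$.

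Next I would carry out the standard energy estimate. Multiplying the equation for $w$ by $w$, integrating over $\O$, and integrating by parts (legitimate because, for a strong solution, $w(t)\in H^2(\O)$ with vanishing normal derivative, $\p_t w(t)\in L^2(\O)$, and $t\mapsto\norm{w(t)}_{\LO 2}^2$ is absolutely continuous on any $(\tau,T)$ with $\tau>0$) gives
\begin{equation*}
\tfrac12\tfrac{d}{dt}\norm{w(t)}_{\LO 2}^2 = -d\norm{\na w(t)}_{\LO 2}^2 + \int_{\O} w(x,t)\pa{f(x,t)-\overline{f}(t)}\,dx.
\end{equation*}
Since $\overline{w}(t)=0$, the Poincaré inequality \eqref{eq:poincare} yields $\norm{\na w(t)}_{\LO 2}^2\geq C_P^{-1}\norm{w(t)}_{\LO 2}^2$, while Young's inequality applied to the source term with weight $a=\frac{2d\eps}{C_P}$ gives $\int_{\O}w(f-\overline f)\,dx\leq \frac{d\eps}{C_P}\norm{w}_{\LO 2}^2+\frac{C_P}{4d\eps}\norm{f-\overline f}_{\LO 2}^2$. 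Combining these and multiplying by $2$ produces the differential inequality
\begin{equation*}
\tfrac{d}{dt}\norm{w(t)}_{\LO 2}^2 + \tfrac{2d\pa{1-\eps}}{C_P}\norm{w(t)}_{\LO 2}^2 \leq \tfrac{C_P}{2d\eps}\norm{f(t)-\overline{f}(t)}_{\LO 2}^2.
\end{equation*}

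Finally I would integrate this inequality in Duhamel form, i.e.\ multiply by the integrating factor $e^{\frac{2d(1-\eps)}{C_P}t}$ and integrate from $0$ to $t$, which delivers \eqref{eq:L_2_convergence} verbatim after recalling $w(0)=u_0-\overline{u_0}$. The only genuinely delicate point is the justification of the energy identity and of the absolute continuity of $t\mapsto\norm{w(t)}_{\LO 2}^2$ under the strong-solution regularity of Remark \ref{rem:strong_solution}; this is classical parabolic bookkeeping rather than a real obstacle and is handled by first working on intervals $[\tau,T]$ with $\tau>0$, where the $L^2$-in-time $H^2$ and $\p_t$ bounds are available, and then letting $\tau\to0^+$ using the continuity of $u$ into $L^2(\O)$ at $t=0$.
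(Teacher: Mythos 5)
Your proposal is correct and follows essentially the same route as the paper: introduce the fluctuation $u-\overline{u}$ (with $\partial_t\overline{u}=\overline{f}$), perform the $L^2$ energy estimate, apply the Poincar\'e inequality and Young's inequality with weight $\frac{2d\eps}{C_P}$, and integrate with the integrating factor $e^{\frac{2d(1-\eps)}{C_P}t}$. The only difference is cosmetic: you rewrite the source as $f-\overline{f}$ at the outset, whereas the paper carries $f-\partial_t\overline{u}$ through the computation and identifies $\partial_t\overline{u}=\overline{f}$ at the end.
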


\begin{proof}[Proof of Theorem \ref{thm:main}]
	From Theorem \ref{thm:existence_result_full} and Theorem \ref{thm:entropic_convergence} we know that a unique non-negative bounded classical solution to \eqref{Sys} exists and satisfies
\begin{equation}\nonumber
	\E_{\eps_c,\eps_s,k}(e(t),c(t),s(t))  \leq \E_{\eps_c,\eps_s,k}(e_0,c_0,s_0)e^{-\gamma t},
\end{equation}
for the parameters indicated in the Theorem \ref{thm:entropic_convergence}. As was seen in Remark \ref{rem:possible_choices} we can make the choices that correspond to \eqref{eq:choice_for_gamma} with \eqref{eq:possible_choices_full_diffusion}.

Using the Csisz\'ar-Kullback-Pinsker inequality 
$$\norm{f-g}_{L^1\pa{\O}} \leq \sqrt{C_{\mathrm{CPK}}\int_{\O}h\pa{f(x)|g(x)}dx}$$
where $C_{\mathrm{CPK}}$ is a fixed known constant (see for instance \cite{arnold2001convex}), together with the definition of $\E_{\eps_c,\eps_s,k}$ we find that
\begin{equation}\label{eq:L_1_convergence}
	\begin{gathered}
		\norm{c(t)}_{\LO{1}} \leq \frac{\E_{\eps_c,\eps_s,k}(e(t),c(t),s(t))}{k}\leq \frac{\E_{\eps_c,\eps_s,k}(e_0,c_0,s_0)}{k}e^{-\gamma t},\\
		\norm{s(t)}_{\LO{1}} \leq \frac{2k_r\E_{\eps_c,\eps_s,k}(e(t),c(t),s(t))}{k\pa{2k_r+k_c}}\leq \frac{2k_r\E_{\eps_c,\eps_s,k}(e_0,c_0,s_0)}{k\pa{2k_r+k_c}}e^{-\gamma t},\\
		\norm{e(t)-\einf}_{\LO{1}}\leq \sqrt{C_{\mathrm{CPK}}\int_{\O}h\pa{e(x,t)|\einf}dx}\\ \leq \sqrt{C_{\mathrm{CPK}}\E_{\eps_c,\eps_s,k}(e(t),c(t),s(t))}
		\leq \sqrt{C_{\mathrm{CPK}}\E_{\eps_c,\eps_s,k}(e_0,c_0,s_0)}e^{-\frac{\gamma t}{2}}.
	\end{gathered}
\end{equation}
Since for any $p\in [1,\infty)$
$$\norm{u}_{L^p\pa{\Omega}} \leq \norm{u}_{L^\infty\pa{\Omega}}^{1-\frac{1}{p}}\norm{u}^{\frac{1}{p}}_{L^1\pa{\Omega}}$$
we see that
\begin{equation}\label{eq:L_p_convergence}
	\begin{gathered}
		\norm{c(t)}_{\LO{p}} \leq \su^{1-\frac{1}{p}}\pa{\frac{\E_{\eps_c,\eps_s,k}(e_0,c_0,s_0)}{k}}^{\frac{1}{p}}e^{-\frac{\gamma t}{p}},\\
		\norm{s(t)}_{\LO{p}} \leq \su^{1-\frac{1}{p}}\pa{\frac{2k_r\E_{\eps_c,\eps_s,k}(e_0,c_0,s_0)}{k\pa{2k_r+k_c}}}^{\frac{1}{p}}e^{-\frac{\gamma t}{p}},\\
		\norm{e(t)-\einf}_{\LO{p}}\leq\pa{\su+M_0}^{1-\frac{1}{p}}\pa{C_{\mathrm{CPK}}\E_{\eps_c,\eps_s,k}(e_0,c_0,s_0)}^{\frac{1}{2p}}e^{-\frac{\gamma t}{2p}},
	\end{gathered}
\end{equation}
where $\su$ is given in Theorem \ref{thm:existence_result_full} and we have used the fact that $\einf=M_0$. 

Denoting by 
\begin{equation}\nonumber
	\begin{gathered}
		f_e(x,t)= -k_fe(x,t)s(x,t) + (k_r+k_c)c(x,t),\\
		f_s(x,t)= -k_fe(x,t)s(x,t) + k_rc(x,t),\\
		f_c(x,t)= k_fe(x,t)s(x,t) - (k_r+k_c)c(x,t),
	\end{gathered}
\end{equation}
we see that the first three equations of \eqref{Sys} can be rewritten as
\begin{equation}\nonumber
	\begin{cases}
		\p_t \pa{e(x,t)-\einf} - d_e\Delta \pa{e(x,t)-\einf} =f_e(x,t)  &x\in\Omega,t>0,\\
		\p_t s(x,t) - d_s\Delta s(x,t) = f_s(x,t) &x\in\Omega, t>0,\\
		\p_t c(x,t) - d_c\Delta c(x,t) = f_c(x,t) &x\in\Omega,t>0,\\
		e(x,0)-\einf = e_0(x)-\einf, \;s(x,0) = s_0(x),\; c(x,0) = c_0(x),& x\in\Omega\\
		\p_\nu e(x,t) = \p_\nu s(x,t) = \p_\nu c(x,t) = 0, &x\in\p\Omega, t>0,
	\end{cases}
\end{equation}
and since \eqref{eq:L_p_convergence} and Theorem \ref{thm:existence_result_full} imply that for any $p\in [1,\infty)$
$$\norm{e(t)s(t)}_{\LO{p}} \leq \su^{2-\frac{1}{p}}\pa{\frac{2k_r\E_{\eps_c,\eps_s,k}(e_0,c_0,s_0)}{k\pa{2k_r+k_c}}}^{\frac{1}{p}}e^{-\frac{\gamma t}{p}} $$
we see that for $p=\frac{n(1+\eta)}{2}$ for any $\eta>0$
\begin{equation}\nonumber
	\begin{gathered}
		\norm{f_e(t)}_{\LO{\frac{n}{2}\pa{1+\eta}}} \leq \pa{ k_f \su + \pa{k_r+k_c}}\su^{1-\frac{2}{n(1+\eta)}}\pa{\frac{\E_{\eps_c,\eps_s,k}(e_0,c_0,s_0)}{k}}^{\frac{2}{n\pa{1+\eta}}}e^{-\frac{2\gamma t}{n\pa{1+\eta}}}\\
		\norm{f_s(t)}_{\LO{\frac{n}{2}\pa{1+\eta}}} \leq \pa{ k_f \su + k_r}\su^{1-\frac{2}{n(1+\eta)}}\pa{\frac{\E_{\eps_c,\eps_s,k}(e_0,c_0,s_0)}{k}}^{\frac{2}{n\pa{1+\eta}}}e^{-\frac{2\gamma t}{n\pa{1+\eta}}}\\
		\norm{f_c(t)}_{\LO{\frac{n}{2}\pa{1+\eta}}} \leq \pa{ k_f \su + \pa{k_r+k_c}}\su^{1-\frac{2}{n(1+\eta)}}\pa{\frac{\E_{\eps_c,\eps_s,k}(e_0,c_0,s_0)}{k}}^{\frac{2}{n\pa{1+\eta}}}e^{-\frac{2\gamma t}{n\pa{1+\eta}}}
	\end{gathered}
\end{equation}
and as such
$$\max\pa{\norm{s(t)}_{\LO{\frac{n}{2}\pa{1+\eta}}},\norm{c(t)}_{\LO{\frac{n}{2}\pa{1+\eta}}},\norm{f_s(t)}_{\LO{\frac{n}{2}\pa{1+\eta}}},\norm{f_c(t)}_{\LO{\frac{n}{2}\pa{1+\eta}}}}$$
$$\leq \max\pa{1,\pa{k_f \su + \pa{k_r+k_c}}}\su^{1-\frac{2}{n(1+\eta)}}\pa{\frac{\E_{\eps_c,\eps_s,k}(e_0,c_0,s_0)}{k}}^{\frac{2}{n\pa{1+\eta}}}e^{-\frac{2\gamma t}{n\pa{1+\eta}}}$$
and
$$\max\pa{\norm{e(t)-\einf}_{\LO{\frac{n}{2}\pa{1+\eta}}},\norm{f_e(t)}_{\LO{\frac{n}{2}\pa{1+\eta}}}} \leq \max\pa{1,\pa{ k_f \su+ \pa{k_r+k_c}}}$$
$$\pa{\su+M_0}^{1-\frac{2}{n(1+\eta)}}\max\pa{\sqrt{\ccpk\E_{\eps_c,\eps_s,k}(e_0,c_0,s_0)},\pa{\frac{\E_{\eps_c,\eps_s,k}(e_0,c_0,s_0)}{k}}}^{\frac{2}{n\pa{1+\eta}}}e^{-\frac{\gamma t}{n\pa{1+\eta}}}$$
Applying Lemma \ref{lem:heat_equation_regularisation} we find that we can find explicit constants $\mathcal{C}_{e,\eta}$, $\mathcal{C}_{s,,\eta}$ and $\mathcal{C}_{c,,\eta}$ depending only geometric on properties, initial datum and $\eta$, that become unbounded as $\eta$ goes to zero, such that  
\begin{equation}\label{eq:exponential_decay_for_e_s_c}
	\begin{gathered}
		\norm{c(t)}_{\LO{\infty}} \leq \mathcal{C}_{c,\eta} e^{-\frac{2\gamma t}{n\pa{1+\eta}}},\\ 
		\norm{s(t)}_{\LO{\infty}} \leq\mathcal{C}_{s,\eta} e^{-\frac{2\gamma t}{n\pa{1+\eta}}},\\
		\norm{e(t)-\einf}_{\LO{\infty}} \leq \mathcal{C}_{e,\eta} e^{-\frac{\gamma t}{n\pa{1+\eta}}},\\
	\end{gathered}
\end{equation}
showing the desired result for $c(x,t)$, $s(x,t)$ and $e(x,t)$. To conclude the proof we consider the equation for $p(x,t)$
\begin{equation}\nonumber
	\begin{cases}
		\p_t p(x,t) - d_p\Delta p(x,t) =f_p(x,t)  &x\in\Omega,t>0,\\
		p(x,0) = p_0(x), & x\in\Omega\\
		\p_\nu p(x,t) = 0, &x\in\p\Omega, t>0,
	\end{cases}
\end{equation}
where $f_p(x,t)=k_cc(x,t)$. According to Lemma \ref{lem:heat_equation_avergae_convergence} we see that for any $\eps>0$
\begin{equation}\nonumber
	\begin{gathered}
		\norm{p(t)-\overline{p}(t)}_{\LO{2}}^2 \leq e^{-\frac{2d_p}{C_p}\pa{1-\eps}t}\norm{p_0-\overline{p_0}}_{\LO{2}}^2\\
		+\frac{C_P e^{-\frac{2d_p}{C_P}\pa{1-\eps}t}}{2d_p \eps}\int_{0}^t e^{\frac{2d_p}{C_P}\pa{1-\eps} s}\norm{f_p(s)-\overline{f_{p}}(s)}^2_{\LO{2}}ds.
	\end{gathered}
\end{equation}
As
$$\norm{f_p(t)}_{\LO{2}}^2 \leq \frac{k_c^2 \su \E_{\eps_c,\eps_s,k}(e_0,c_0,s_0)}{k}\cdot e^{-\gamma t}$$
and 
$$0\leq \overline{f_p}(t) =k_c\overline{c}(t)=k_c\norm{c(t)}_{\LO{1}} \leq  \frac{k_c \E_{\eps_c,\eps_s,k}(e_0,c_0,s_0)}{k}e^{-\gamma t}$$
we can find an appropriate constant $C_{d,\delta,\gamma}$ such that
$$\norm{p(t)-\overline{p}(t)}_{\LO{2}} \leq C_{d,\eps,\gamma}\pa{1+t^{\delta_{\scaleto{\frac{d_p}{C_P}\pa{1-\eps},\frac{\gamma}{2}}{10pt}\mathstrut}}}e^{-\min\pa{\frac{d_p}{C_P}\pa{1-\eps},\frac{\gamma}{2}}t},$$
where we have used the fact that
\begin{equation}\label{eq:exponential_integreal_estimation}
	e^{-\alpha t}\int_{0}^{t}e^{\pa{\alpha-\beta} s}ds =\begin{cases}
		\frac{e^{-\beta t}-e^{-\alpha t}}{\alpha-\beta} & \alpha \not=\beta,\\
	 t e^{-\beta t}& \alpha=\beta,
	\end{cases} \leq  C_{\alpha,\beta}t^{\delta_{\alpha,\beta}}e^{-\min\pa{\alpha,\beta} t}
\end{equation}
for 
\begin{equation}\nonumber
	C_{\alpha,\beta}=\begin{cases}
		\frac{1}{\abs{\alpha-\beta}} & \alpha\not=\beta \\
		1 & \alpha=\beta
	\end{cases}.
\end{equation}
We also notice that
$$\abs{p_\infty-\overline{p}(t)}=\abs{M_1-\int_{\O}p(x,t)dx} = \int_{\O}\pa{c(x,t)+s(x,t)}dx $$
$$= \norm{c(t)}_{\LO{1}}+\norm{s(t)}_{\LO{1}} \leq \frac{2\E_{\eps_c,\eps_s,k}(e_0,c_0,s_0)}{k}e^{-\gamma t},$$
from which we see that
$$\norm{p(t)-p_\infty}_{\LO{2}} \leq \widetilde{C}_{d,\eps}\pa{1+t^{\delta_{\scaleto{\frac{d_p}{C_P}\pa{1-\eps},\frac{\gamma}{2}}{10pt}\mathstrut}}}e^{-\min\pa{\frac{d_p}{C_P}\pa{1-\eps},\frac{\gamma}{2}}t}.$$
As $\norm{p(t)-p_\infty}_{\LO{\infty}} \leq \su +M_1$ according to Theorem \ref{thm:existence_result_full} and since for any $p\geq 2$
$$\norm{u}_{L^p\pa{\Omega}} \leq \norm{u}_{L^\infty\pa{\Omega}}^{1-\frac{2}{p}}\norm{u}^{\frac{2}{p}}_{L^2\pa{\Omega}}$$
we can follow the same steps as those in our investigation of $c(x,t)$, $s(x,t)$ and $e(x,t)$ to conclude that
$$	\norm{p(t)-p_\infty}_{\LO{\infty}} \leq \mathcal{C}_{p,\eta,\eps}\pa{1+ t^{\frac{4}{n\pa{1+\eta}}\delta_{\scaleto{\frac{2d_p}{C_P}\pa{1-\eps},\gamma}{10pt}\mathstrut}}}e^{-\min\pa{\frac{4d_p}{nC_P\pa{1+\eta}}\pa{1-\eps},\frac{2\gamma}{n\pa{1+\eta}}}t}$$
when $\frac{n}{2}\pa{1+\eta}\geq 2$. This completes the proof. 
\end{proof}

\begin{proof}[Proof of Theorem \ref{thm2}]
	Much like the proof of Theorem \ref{thm:main} we use theorems \ref{thm:existence_result_degenerate} and \ref{thm:entropic_convergence} to show that\footnote{Remember that the entropy $\E_{\eps_c,\eps_s,k}(e,c,s)$ is defined the same as in the full diffusion case, only with $\einf$ and $\eps_c$ being functions that satisfy \eqref{eq:eps_conection_partial_diffusion}.}
	\begin{equation}\label{eq:L_1_convergence_partial_diffusion}
		\begin{gathered}
			\norm{c(t)}_{\LO{1}} \leq \frac{\E_{\eps_c,\eps_s,k}(e_0,c_0,s_0)}{k}e^{-\gamma t},\\
			\norm{s(t)}_{\LO{1}} \leq  \frac{2k_r\E_{\eps_c,\eps_s,k}(e_0,c_0,s_0)}{k\pa{2k_r+k_c}}e^{-\gamma t},\\
			\norm{e(x,t)-\einf(x)}_{\LO{1}}=\norm{c(t)}_{\LO{1}} \leq \frac{\E_{\eps_c,\eps_s,k}(e_0,c_0,s_0)}{k}e^{-\gamma t},
		\end{gathered}
	\end{equation}
	with $\gamma$ satisfying \eqref{eq:conditions_gamma_partial_diffusion}. Note that to attain the last inequality we have used the conservation law \eqref{law3}
	$$\einf(x)=e(x,t)+c(x,t)=e_0(x)+c_0(x).$$
	Again, using Remark \ref{rem:possible_choices}, we see that we can make the choices that correspond to \eqref{eq:choice_for_lambda} with \eqref{eq:possible_choices_partial_diffusion}.\\
	Continuing as in the proof of Theorem \ref{thm:main} we see that for any $p\geq 1$ we have that
	\begin{equation}\label{eq:L_p_convergence_partial_diffusion}
		\begin{gathered}
			\norm{c(t)}_{\LO{p}} \leq \su^{1-\frac{1}{p}}\pa{\frac{\E_{\eps_c,\eps_s,k}(e_0,c_0,s_0)}{k}}^{\frac{1}{p}}e^{-\frac{\gamma t}{p}},\\
			\norm{s(t)}_{\LO{p}} \leq \su^{1-\frac{1}{p}}\pa{\frac{2k_r\E_{\eps_c,\eps_s,k}(e_0,c_0,s_0)}{k\pa{2k_r+k_c}}}^{\frac{1}{p}}e^{-\frac{\gamma t}{p}},\\
			\norm{e(t)-\einf}_{\LO{p}}=\norm{c(t)}_{\LO{p}} \leq\su^{1-\frac{1}{p}}\pa{\frac{\E_{\eps_c,\eps_s,k}(e_0,c_0,s_0)}{k}}^{\frac{1}{p}}e^{-\frac{\gamma t}{p}}.
		\end{gathered}
	\end{equation}
    Since $s$ satisfies
	$$\p_t s(x,t) - d_s\Delta s(x,t) = \underbrace{-k_fe(x,t)s(x,t) + k_rc(x,t)}_{f_s(x,t)}$$
	and
	$$\norm{f_s(t)}_{L^p\pa{\O}}\leq \mathcal{C}_{s}e^{-\frac{\gamma t}{p}}$$
	for an appropriate constant, we get from Lemma \ref{lem:heat_equation_regularisation} that for any $\eta>0$ there exists a constant $\mathcal{C}_{s,\eta}$ that blows up as $\eta$ goes to zero such that
	$$\norm{s(t)}_{\LO{\infty}} \leq\mathcal{C}_{s,\eta} e^{-\frac{2\gamma t}{n\pa{1+\eta}}}.$$
	Next we turn our attention to the convergence of $c$. As $c$ satisfies the equation
	$$\p_t c(x,t)=k_fe(x,t)s(x,t) - (k_r+k_c)c(x,t)$$
	we have that
	\begin{equation*}
		c(x,t) = e^{-(k_r+k_c)t}c_0(x) + k_f\int_0^{t}e^{-(k_r+k_c)(t-\xi)}e(x,\xi)s(x,\xi)d\xi.
	\end{equation*}
	Since $c$ is also non-negative we find that 
	\begin{align*}
		\|c(t)\|_{\LO{\infty}} &\leq e^{-(k_r+k_c)t}\pa{\|c_0\|_{\LO{\infty}} + k_f\int_0^t e^{(k_r+k_c)\xi}\|e(\xi)\|_{\LO{\infty}}\|s(\xi)\|_{\LO{\infty}}d\xi}\\
		&\leq e^{-(k_r+k_c)t}\pa{\|c_0\|_{\LO{\infty}} + \mathcal{C}_{s,\eta}\su\int_0^t e^{\pa{k_r+k_c-\frac{2\gamma}{n\pa{1+\eta}}}\xi}d\xi}\\
		 \leq &\mathcal{C}_{c,s,\eta}\pa{1+t^{\delta_{\scaleto{k_r+k_c,\frac{2\gamma}{n\pa{1+\eta}}}{10pt}\mathstrut}}}e^{-\min\pa{k_r+k_c,\frac{2\gamma}{n\pa{1+\eta}}}t},
	\end{align*}
	where we have used \eqref{eq:exponential_integreal_estimation}. Using the conservation law \eqref{law3} again we get that
	$$\norm{e(x,t)-\einf(x)}_{L^\infty\pa{\O}}=\norm{c(t)}_{\LO{\infty}}\leq \mathcal{C}_{c,s,\eta}\pa{1+t^{\delta_{\scaleto{k_r+k_c,\frac{2\gamma}{n\pa{1+\eta}}}{10pt}\mathstrut}}}e^{-\min\pa{k_r+k_c,\frac{2\gamma}{n\pa{1+\eta}}}t}.$$
	The proof of the rate of convergence of $p(x,t)-\pinf$ to zero is identical to that presented in the proof of Theorem \ref{thm:main}, and as such we conclude the proof of the theorem.
\end{proof}
With our main investigation complete, we now turn our attention to a few final remarks.

\section{Final Remarks}\label{sec:final}

While some of the calculations presented in this work are quite technical, the true heart of proofs - the definition of a ``cut-off'' entropy functional and the study of the interplay between it and a decreasing mass term - is simple and powerful enough that we believe it could be widely used in many other open and irreversible systems. We would like to end this study with a few remarks/observations.

\subsection{The functional inequality}\label{subsec:functional_inequality} As was mentioned in our introduction \S\ref{subsec:main}, showing the decay of our new entropy functional, $\E_{\eps_c,\eps_{s},k}$, heavily relied on a functional inequality of the form
\begin{equation*}
	\E_{\eps_c,\eps_s,k}(e,s,c) \lesssim \begin{cases}
		 \int_{\O}\big(\d_{\eps_c,\eps_s}(x) +h(e(x)|\overline{e}) + \m(x)\big)dx & d_e,d_s,d_c>0,\\
		 \int_{\O}\big(\d_{\eps_c,\eps_s}(x)  + \m(x)\big)dx & d_e=d_c=0
	\end{cases}.
\end{equation*} 
While this has not been stated explicitly as a lemma, proposition or a theorem, this inequality is sole ingredient of the proof of Theorem \ref{thm:entropic_convergence}, and its proof can be found there.

\subsection{The case where $d_c=0$ and $d_e>0$}\label{subsec:another_case}
One can apply our techniques and find explicit exponential convergence to equilibrium for the $L^\infty$ norms when all diffusion coefficients but $d_c$ are strictly positive. In this case the equilibrium will be the same as that for when all diffusion coefficients were strictly positive. This situation, however, is not chemically relevant and as such we have elected to not treat it.
\subsection{Optimal rate of convergence in the case where all diffusion coefficients are strictly positive}\label{subsec:optimal_rate}
It is clear that the explicit rate of convergence given in Remark \ref{rem:explicit_gamma} is not optimal. This stems from the multiple estimations we have made to achieve our results, estimations that are extremely hard to optimise simultaneously. Nevertheless, since we have shown exponential convergence to equilibrium we know that eventually (which can be expressed explicitly) the solution will be in a small neighbourhood of the equilibrium. This allows us to consider the linearised version of our equations and attain the optimal long time behaviour of the solutions, at least when this linear system indeed approximates the full set of equations with respect to this behaviour.

Denoting by $\wt{y} = y - y_\infty$ for $y$ which can be $e$, $s$, $c$ or $p$, we find that the linearised system of equations around the equilibrium $(\einf, \sinf, \cinf, \pinf)$ is
\begin{equation}\label{linearised_sys}
\begin{cases}
	\p_t \wt{e}(x,t) - d_e\Delta \wt{e}(x,t) = -k_f\einf \wt{s}(x,t) + (k_r+k_c)\wt{c}(x,t), &x\in\Omega, t>0,\\
	\p_t \wt{s}(x,t) - d_s\Delta \wt{s}(x,t) = - k_f\einf \wt{s}(x,t) + k_r\wt{c}(x,t), &x\in \Omega, t>0,\\
	\p_t\wt{c}(x,t) - d_c\Delta \wt{c}(x,t) = k_f\einf \wt{s}(x,t) - (k_r+k_c)\wt{c}(x,t), &x\in\Omega, t>0,\\
	\p_t\wt{p}(x,t) - d_p\Delta \wt{p}(x,t) = k_c\wt{c}(x,t), &x\in\Omega, t>0,
\end{cases}
\end{equation}
with initial data $\wt{y}(x,0) = y_0(x) - y_\infty$ and homogeneous Neumann boundary conditions. 

As before, we notice that the equation of $\wt{p}$ is decoupled from the rest of the equations.\\
Denoting by 
$$0=\lam_0 < \lam_1 < \lam_2 \leq \lam_3 \leq \cdots \to \infty$$
 the eigenvalues of $-\Delta$ with homogeneous Neumann boundary condition in $\O$, and by $\{\omega_j\}_{j\in \N\cup \br{0}}$ the corresponding eigenfunctions basis of $\LO{2}$ (see for instance \cite[\S 5.7]{taylor1996partial}), we claim the following
\begin{proposition}
	The solution to the system \eqref{linearised_sys} decays to zero in $\LO{\infty}$-norm with the optimal rates
	\begin{equation}\label{eq:optimal_convergence}
		\begin{gathered}
		\norm{\wt{c}(t)}_{\LO{\infty}}+\norm{\wt{s}(t)}_{\LO{\infty}}\leq \mathcal{C}_{s,c}e^{-\mu_{\mathrm{opt}} t},\\
		\norm{\wt{e}(t)}_{\LO{\infty}}\leq \mathcal{C}_{e,s,c}\pa{1+t^{\delta_{d_e \lambda_1,\mu_{\mathrm{opt}}}}}e^{-\min\pa{d_e\lambda_1,\mu_{\mathrm{opt}} }t},\\
		\norm{\wt{p}(t)}_{\LO{\infty}}\leq \mathcal{C}_{p,s,c}\pa{1+t^{\delta_{d_p \lambda_1,\mu_{\mathrm{opt}}}}}e^{-\min\pa{d_p\lambda_1,\mu_{\mathrm{opt}} }t},\\
		\end{gathered}
	\end{equation}
	where\footnote{Indeed
	$$\mu_{\mathrm{opt}}=\frac{\pa{k_f\einf +k_r+k_c }^2-\pa{k_f\einf - k_r-k_c}^2 - 4k_rk_f\einf}{2\pa{k_f\einf +k_r+k_c +\sqrt{\pa{k_f\einf - k_r-k_c}^2 + 4k_rk_f\einf}}}$$
	$$=\frac{4k_f \einf \pa{k_r+k_c}- 4k_rk_f\einf}{2\pa{k_f\einf +k_r+k_c +\sqrt{\pa{k_f\einf - k_r-k_c}^2 + 4k_rk_f\einf}}}>0$$}
	\begin{equation}\label{eq:optimal_rate}
		\mu_{\mathrm{opt}}= \frac 12\pa{k_f\einf +k_r+k_c - \sqrt{\pa{k_f\einf - k_r-k_c}^2 + 4k_rk_f\einf}}>0,
	\end{equation}
and 
$$\delta_{x,y}=\begin{cases}
1 & x=y \\
0 & x\not=y	
\end{cases}.$$
\end{proposition}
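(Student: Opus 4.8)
The plan is to diagonalise in the spatial variable. Since the $(\wt{s},\wt{c})$ subsystem of \eqref{linearised_sys} is closed (it does not involve $\wt{e}$ or $\wt{p}$), expanding $\wt{s}(x,t)=\sum_{j\geq 0}s_j(t)\omega_j(x)$, $\wt{c}(x,t)=\sum_{j\geq 0}c_j(t)\omega_j(x)$ and projecting onto $\omega_j$ reduces it to the family of two-dimensional linear ODEs
\begin{equation*}
	\frac{d}{dt}\begin{pmatrix}s_j\\ c_j\end{pmatrix}=-A_j\begin{pmatrix}s_j\\ c_j\end{pmatrix},\qquad A_j:=\begin{pmatrix}d_s\lam_j+k_f\einf & -k_r\\ -k_f\einf & d_c\lam_j+k_r+k_c\end{pmatrix},
\end{equation*}
so everything hinges on a bound, uniform in $j$, for $\norm{e^{-A_jt}}$.

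The key point is that the whole family $\br{A_j}_{j\geq 0}$ is simultaneously symmetrised by a single, $j$-independent diagonal conjugation: with $P:=\mathrm{diag}\pa{\sqrt{k_r/(k_f\einf)},\,1}$ one has
\begin{equation*}
	P^{-1}A_jP=S_j:=S_0+\lam_j\,\mathrm{diag}(d_s,d_c),\qquad S_0:=\begin{pmatrix}k_f\einf & -\sqrt{k_rk_f\einf}\\ -\sqrt{k_rk_f\einf} & k_r+k_c\end{pmatrix},
\end{equation*}
with $S_0$ symmetric and, since $\det S_0=k_f\einf k_c>0$ and $\mathrm{tr}\,S_0>0$, positive definite; computing the characteristic polynomial of $S_0$ identifies its smaller eigenvalue with exactly $\mu_{\mathrm{opt}}$ of \eqref{eq:optimal_rate}. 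Because $\mathrm{diag}(d_s,d_c)$ is positive definite and $\lam_j\geq 0$, Weyl's monotonicity inequality gives $\lambda_{\min}(S_j)\geq\lambda_{\min}(S_0)=\mu_{\mathrm{opt}}$ for every $j$, and in fact $\lambda_{\min}(S_j)\geq\mu_{\mathrm{opt}}+\lam_j\min(d_s,d_c)$. Since $A_j$ is similar, via the fixed matrix $P$, to the symmetric $S_j$, it follows that $\norm{e^{-A_jt}}\leq\norm{P}\norm{P^{-1}}e^{-\lambda_{\min}(S_j)t}\leq\norm{P}\norm{P^{-1}}e^{-\mu_{\mathrm{opt}}t}$ uniformly in $j$. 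Summing over $j$ yields $\norm{\wt s(t)}_{\LO2}+\norm{\wt c(t)}_{\LO2}\lesssim e^{-\mu_{\mathrm{opt}}t}$, and the extra factor $e^{-\lam_j\min(d_s,d_c)t}$ in the high modes upgrades this, for $t\geq1$, to $\norm{\wt s(t)}_{H^k}+\norm{\wt c(t)}_{H^k}\lesssim e^{-\mu_{\mathrm{opt}}t}$ for every $k$; Sobolev embedding (or Lemma \ref{lem:heat_equation_regularisation} applied to the $\wt s$ and $\wt c$ equations) then gives the first line of \eqref{eq:optimal_convergence}.

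For $\wt e$ and $\wt p$ one splits off the spatial average. Integrating \eqref{linearised_sys} and using the Neumann conditions shows that the linearised flow preserves $\int_\O(\wt e+\wt c)$ and $\int_\O(\wt s+\wt c+\wt p)$, and these vanish at $t=0$ by the definition of the tilde variables together with \eqref{law1}--\eqref{law2}; hence $\overline{\wt e}(t)=-\overline{\wt c}(t)$ and $\overline{\wt p}(t)=-\overline{\wt s}(t)-\overline{\wt c}(t)$ decay at rate $\mu_{\mathrm{opt}}$ by the previous step. The oscillatory parts $\wt e-\overline{\wt e}$ and $\wt p-\overline{\wt p}$ solve heat equations with mean-zero forcings $-k_f\einf\wt s+(k_r+k_c)\wt c$ and $k_c\wt c$ respectively, whose $\LO2$-norms decay like $e^{-\mu_{\mathrm{opt}}t}$, so Lemma \ref{lem:heat_equation_avergae_convergence} gives decay at rate $\min(d_e\lam_1,\mu_{\mathrm{opt}})$, resp.\ $\min(d_p\lam_1,\mu_{\mathrm{opt}})$, with the Kronecker-$\delta$ polynomial correction precisely when the two competing rates coincide (the $e^{-\alpha t}\int_0^te^{(\alpha-\beta)s}\,ds$ resonance, cf.\ \eqref{eq:exponential_integreal_estimation}). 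A final interpolation with the uniform-in-time $\LO\infty$ bound of Theorem \ref{thm:existence_result_full} (or the analogous linear estimate), exactly as in the proof of Theorem \ref{thm:main}, passes these estimates to $\LO\infty$ and yields the last two lines of \eqref{eq:optimal_convergence}.

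Optimality of the stated rates is witnessed by explicit solutions: choosing $(\wt s_0,\wt c_0)$ to be the spatially constant eigenvector of $A_0$ associated with $\mu_{\mathrm{opt}}$ gives $\wt s(t),\wt c(t)$ proportional to $e^{-\mu_{\mathrm{opt}}t}$, so no faster rate is possible; taking instead $\wt e_0=\omega_1$, $\wt s_0=\wt c_0=0$ realises the rate $d_e\lam_1$ for $\wt e$, and when $d_e\lam_1=\mu_{\mathrm{opt}}$ the resonance genuinely produces the $t\,e^{-\mu_{\mathrm{opt}}t}$ behaviour, with the analogous statements for $\wt p$. The one genuinely non-routine step is the $j$-uniform spectral estimate $\lambda_{\min}(A_j)\geq\mu_{\mathrm{opt}}$: $A_j$ is not symmetric, so this does not come from a naive energy estimate, and the crux is precisely that the single similarity $P$ symmetrises the whole family at once, after which Weyl monotonicity closes it; everything else is the heat-semigroup regularisation machinery already developed in \S\ref{sec:unfiorm}.
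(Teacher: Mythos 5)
Your proposal is correct and follows the same overall architecture as the paper's proof (expansion in the Neumann eigenbasis $\br{\omega_j}$, reduction of the closed $(\wt{s},\wt{c})$ block to the $2\times 2$ systems governed by $A_j$, Duhamel plus the two conservation laws to handle the zero modes of $\wt{e}$ and $\wt{p}$), but the central step is handled by a genuinely different and arguably cleaner device. The paper computes the larger root $\tau_{\mathrm{max},j}$ of the characteristic polynomial of $A_j$ and shows by a calculus argument ($\frac{d}{d\lam_j}\tau_{\mathrm{max},j}<0$) that the spectral abscissa is maximised at $j=0$, where it equals $-\mu_{\mathrm{opt}}$. You instead observe that the single diagonal conjugation $P=\mathrm{diag}\pa{\sqrt{k_r/(k_f\einf)},1}$ symmetrises the entire family at once, so that $S_j=S_0+\lam_j\,\mathrm{diag}(d_s,d_c)$ and Weyl monotonicity gives $\lambda_{\min}(S_j)\geq \mu_{\mathrm{opt}}+\lam_j\min(d_s,d_c)$; your identification of $\lambda_{\min}(S_0)$ with $\mu_{\mathrm{opt}}$ checks out since $\mathrm{tr}^2(S_0)-4\det S_0=(k_f\einf-k_r-k_c)^2+4k_rk_f\einf$. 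This buys you something the paper's eigenvalue-only argument quietly omits: since the $A_j$ are not normal, controlling $\norm{e^{-A_jt}}$ by $e^{-\mu_{\mathrm{opt}}t}$ \emph{uniformly in $j$} requires a $j$-independent bound on the conditioning of the diagonalisation, which your fixed $P$ supplies for free via $\norm{P}\norm{P^{-1}}$; your extra factor $e^{-\lam_j\min(d_s,d_c)t}$ also gives the $H^k$-smoothing route to $\LO{\infty}$ without degrading the rate, and your explicit eigenvector solutions witness optimality, which the paper asserts but does not verify. Two minor caveats, both at the level of informality the paper itself admits to: Lemma \ref{lem:heat_equation_avergae_convergence} as stated yields the rate $\tfrac{d}{C_P}(1-\eps)$ rather than the exact $d_e\lam_1$ (resp. $d_p\lam_1$), so to get the sharp rates in \eqref{eq:optimal_convergence} you should argue mode by mode via \eqref{eq:solution_for_e_linearised} and \eqref{eq:exponential_integreal_estimation} as the paper does (or use the sharp spectral gap for mean-zero data); and the closing ``interpolation with the uniform $\LO{\infty}$ bound as in Theorem \ref{thm:main}'' would multiply the exponent by $\tfrac{2}{p}<1$ and thus lose optimality --- better to stick with the $H^k$/Sobolev upgrade you already use for $\wt{s},\wt{c}$.
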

\begin{remark}
	One notices from \eqref{eq:optimal_convergence} and \eqref{eq:optimal_rate} that the optimal decay rates do not depend on the diffusion rate of $s$ or $c$.
\end{remark}
\begin{proof}
We give a more formal proof to this proposition, and will not concern ourselves with discussing the existence and uniqueness of solutions, or other technical issues. Writing
\begin{equation}\label{eq:orthogonal}
	\wt{y}(x,t) = \sum_{j=0}^{\infty}\wt{y}_j(t)\omega_j(x),
\end{equation}
for $y$ equals $s$ or $c$, we see that the second and third equations of  \eqref{linearised_sys} (which are decoupled from the rest) are equivalent to the infinite system of ODEs
\begin{equation*}
		\frac{d}{dt}\wt{X}_j(t) = A_j\wt{X}_j (t),\qquad j\in\N\cup\br{0},
\end{equation*}
 where
\begin{equation*}
	\wt{X}_j = \begin{pmatrix}
		\wt{s}_j \\ \wt{c}_j 
	\end{pmatrix},\qquad A_j = \begin{pmatrix}
		 -d_s\lam_j - k_f\einf & k_r\\
		 k_f\einf & -d_c\lam_j - (k_r+k_c)
				\end{pmatrix}.
\end{equation*}
The eigenvalues of $A_j$ are the solutions to the quadratic equation
\begin{equation}\label{quadratic}
	\tau^2 + \big[(d_s+d_c)\lam_j + k_f\einf + k_r+k_c\big]\tau + (d_s\lam_j + k_f\einf)(d_c\lam_j + k_r+k_c) - k_rk_f\einf = 0.
\end{equation}
and as such, the maximal eigenvalue, which determine the long time behaviour of the solution, is given by
$$\tau_{\mathrm{max},j}=-\frac{\pa{d_s+d_c}\lam_j +k_f\einf+k_r+k_c  -\sqrt{\triangle_j}}{2}$$
where 
\begin{equation*}
	\triangle_j  = \pa{d_s\lam_j + k_f\einf - (d_c\lam_j + k_r+k_c) }^2 + 4k_rk_f\einf > 0.
\end{equation*}
Since for any $\alpha, \beta \in \R$ and $\gamma>0$ we have that
$$\abs{\frac{d}{dx}\sqrt{\pa{\alpha x +\beta}^2+\gamma }}=\frac{\abs{\alpha}\abs{\pa{\alpha x +\beta}}}{\sqrt{\pa{\alpha x +\beta}^2+\gamma }}\leq \abs{\alpha}$$
we see that for any $\delta> \abs{\alpha}$
$$\frac{d}{dx}\pa{\delta x - \sqrt{\pa{\alpha x +\beta}^2+\gamma }} \geq  \delta -\abs{\alpha}>0.$$
Choosing $\delta=d_s+d_c$, $\alpha= d_s-d_c$, $\beta=k_f\einf-k_r-k_c$ and $\gamma=4k_rk_f\einf$ in the above we conclude
$$\frac{d}{d\lambda_{j}}\tau_{\mathrm{max},j}<0$$
and since $\br{\lambda_j}_{j\in\N\cup\br{0}}$ is an increasing sequence we find that
$$\sup_{j\in\N}\tau_{\mathrm{max},j}=\tau_{\mathrm{max},0}=\frac{-\pa{k_f\einf+k_r+k_c}  +\sqrt{\pa{ k_f\einf -  k_r-k_c}^2 + 4k_rk_f\einf }}{2}=-\mu_{\mathrm{opt}}.$$
This implies that $\wt{s}$ and $\wt{c}$ decay with an exponential rate of $\mu_{\mathrm{opt}}$, which is optimal.\\
Next we turn our attention to $\wt{e}$ and $\wt{p}$. Using the same orthogonal decomposition \eqref{eq:orthogonal} we find the following infinite set of ODEs:
\begin{equation}\label{eq:eq_for_tilde_e_and_p}
	\begin{gathered}
		\frac{d}{dt}\wt{e}_j(t)=-d_e \lambda_j\wt{e}_j(t) -k_f\einf \wt{s}_j(t) + (k_r+k_c)\wt{c}_j(t),\\
		\frac{d}{dt}\wt{p}_j(t)=-d_p \lambda_j\wt{p}_j(t) +k_c\wt{c}_j(t).
	\end{gathered}
\end{equation}
We will focus our attention on showing the convergence rate for $\wt{e}$. The convergence of $\wt{p}$ will be achieved in an identical way (by replacing $d_e$ with $d_p$). Equation \eqref{eq:eq_for_tilde_e_and_p} implies that 
\begin{equation}\label{eq:solution_for_e_linearised}
	\wt{e}_j(t) = e^{-d_e \lambda_j t}\wt{e}_j(0) + \int_{0}^{t}e^{-d_e\lambda_j (t-\xi)}\pa{-k_f\einf \wt{s}_j(\xi) + (k_r+k_c)\wt{c}_j(\xi)}d\xi.
\end{equation}
Thus, using the known optimal decay rate for $\wt{c}_j$ and $\wt{s}_j$ we see that
$$\abs{\wt{e}_j(t)} \leq e^{-d_e \lambda_j t}\abs{\wt{e}_j(0)} + \mathcal{C}_{s_j,c_j}\int_{0}^{t}e^{-d_e\lambda_j (t-\xi)}e^{-\mu_{\mathrm{opt}} \xi}d\xi,$$
where $\mathcal{C}_{s_j,c_j}$ is a constant that depends only on 
$$\wt{s}_j(0) = \inner{s_0,\omega_j},\quad \wt{c}_j(0) = \inner{c_0,\omega_j}$$
and $k_f$, $k_r$, $k_c$ and $\einf$. From this and \eqref{eq:exponential_integreal_estimation} we conclude that
\begin{equation}\label{eq:component_estimation_on_e_linearised}
	\begin{gathered}
		\abs{\wt{e}_j(t)} 
	\leq \pa{\abs{\wt{e}_j(0)}+\widetilde{\mathcal{C}}_{s_j,c_j}}\pa{1+t^{\delta_{d_e\lambda_j,\mu_{\mathrm{opt}}}}}e^{-\min\pa{d_e \lambda_j,\mu_{\mathrm{opt}}}t}.
\end{gathered}
\end{equation}
Since $\br{\lambda_j}_{j\in\N}$ is an increasing sequence of numbers, we find that for any $j\geq 1$
\begin{equation}\label{eq:rate_of_convergence_e_j_and_j_bigger_0}
	\abs{\wt{e}_j(t)} \leq \pa{\abs{\wt{e}_j(0)}+\widetilde{\mathcal{C}}_{s_j,c_j}}\pa{1+t^{\delta_{d_e\lambda_1,\mu_{\mathrm{opt}}}}}e^{-\min\pa{d_e \lambda_1,\mu_{\mathrm{opt}}}t}.
\end{equation}
The above approach, however, is not useful when $j=0$ as in this case $\lambda_j=0$ and \eqref{eq:component_estimation_on_e_linearised} yields only an upper bound. Instead we use the simple conservation law (much like in the full equation)
$$ \int_{\O}\pa{\wt{e}(x,t)+\wt{c}(x,t)}dx=\int_{\O}\pa{e(x,t)+c(x,t)}dx-M_0=0$$
and the fact that since $\omega_j(x)\equiv 1$ we have that
$$\wt{f}_0 = \inner{f,1}=\int_{\O}f(x)dx,$$
to conclude that
\begin{equation}\label{eq:rate_of_convergence_e_j_and_j_=_0}
	\abs{\wt{e}_0(t)}=\abs{\wt{c}_0(t)}\leq \mathcal{C}_{c}e^{-\mu_{\mathrm{opt}}t}\leq \mathcal{C}_c\pa{1+t^{\delta_{d_e\lambda_1,\mu_{\mathrm{opt}}}}}e^{-\min\pa{d_e \lambda_1,\mu_{\mathrm{opt}}}t}.
\end{equation}
Combining \eqref{eq:rate_of_convergence_e_j_and_j_bigger_0} and \eqref{eq:rate_of_convergence_e_j_and_j_=_0} gives us the desired $L^\infty$ bound on $\wt{e}(t)$. As was mentioned before, the treatment of $\wt{p}$ is exactly the same with the use of the second conservation law
$$\wt{s}_0(t)+\wt{c}_0(t)+\wt{p}_0(t) = \int_{\O}\pa{s(x,t)+c(x,t)+p(x,t)}dx=\int_{\O}\pa{e(x,t)+c(x,t)}dx-M_1=0.$$
The proof is thus complete.
\end{proof} 

\subsection{Convergence to equilibrium without the lower bound condition on $e_0+c_0$}\label{subsec:convergence_without_lower_bound}
As was mentioned in Remark \ref{remark:convergence_without_lower_bound}, and is clearer now from the proof of Theorem \ref{thm:entropic_convergence}, the lower bound \eqref{lower_bounded_e0+c0} is essential to show and quantitatively estimate the convergence to equilibrium of the system \eqref{Sys} in the case where $d_e = d_c = 0$. Intuitively, however, we can still expect a strong convergence to equilibrium in situations where \eqref{lower_bounded_e0+c0} is not fulfilled. Denoting the set
$$\O_{\text{zero}} = \{ x\in\O\,:\, e_0(x) + c_0(x) = 0\}=\{ x\in\O\,:\, e_0(x) = c_0(x) = 0\}$$
we see that as the evolution of the concentration $s$ is dominated by diffusion on $\O_{\text{zero}}$ at short times, $s$ will diffuse away to $\O\setminus \O_{\text{zero}}$ where it will get converted into product and complex and start a chain reaction that will lead to an eventual convergence to equilibrium. Proving this intuition rigorously, however, remains an interesting open problem. It is worth mentioning that the tools we've developed in this work (mainly Theorem \ref{thm:first_entropy_inequality}) are sufficient to show \textit{qualitative} convergence to equilibrium of the entropy even in this ``degenerate'' case.\\
\begin{figure}[ht]
     \centering
     \begin{subfigure}[b]{0.48\textwidth}
         \centering
         \includegraphics[width=\textwidth]{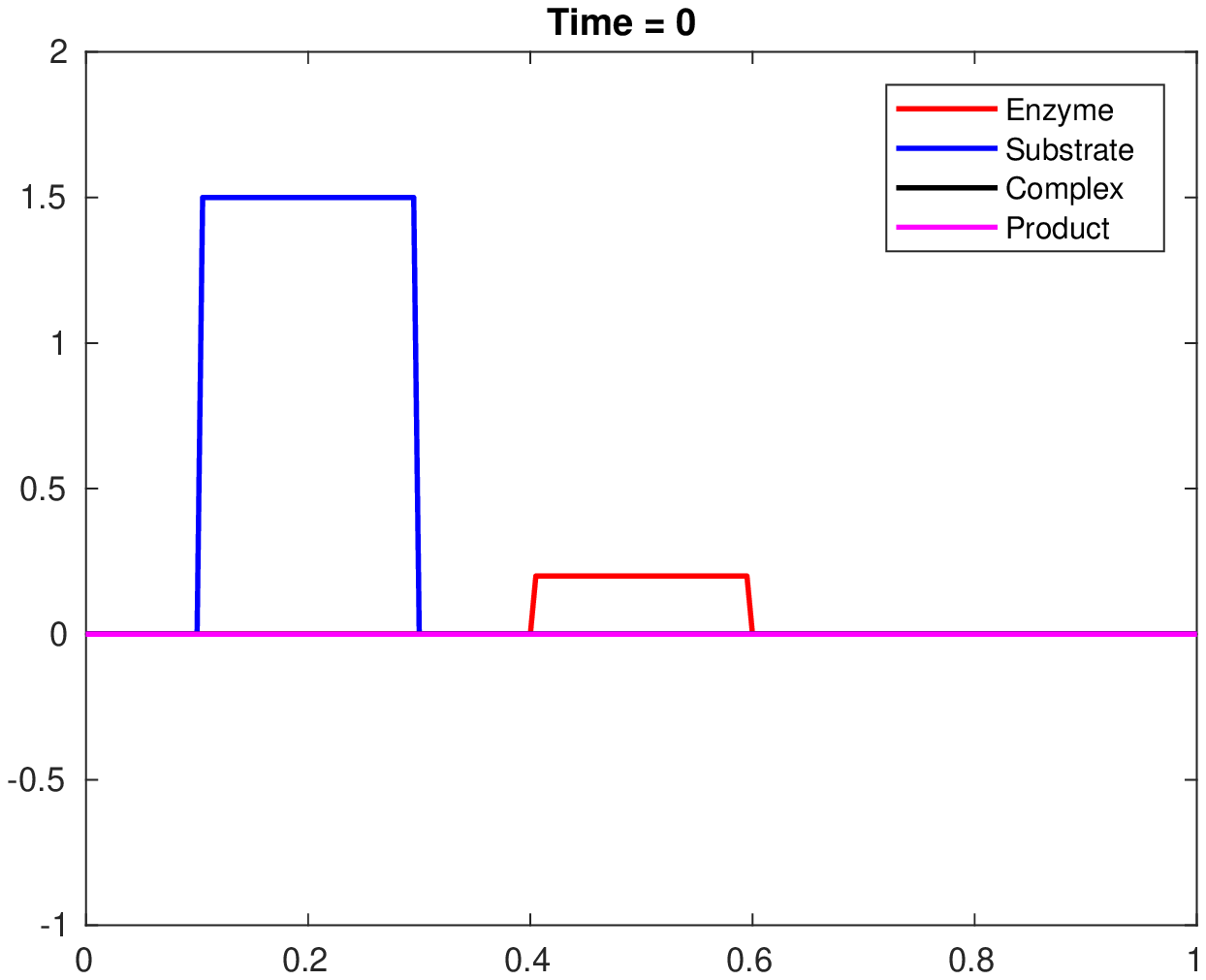}
         \caption{Initial concentrations}
     \end{subfigure}
     \hfill
     \begin{subfigure}[b]{0.48\textwidth}
         \centering
         \includegraphics[width=\textwidth]{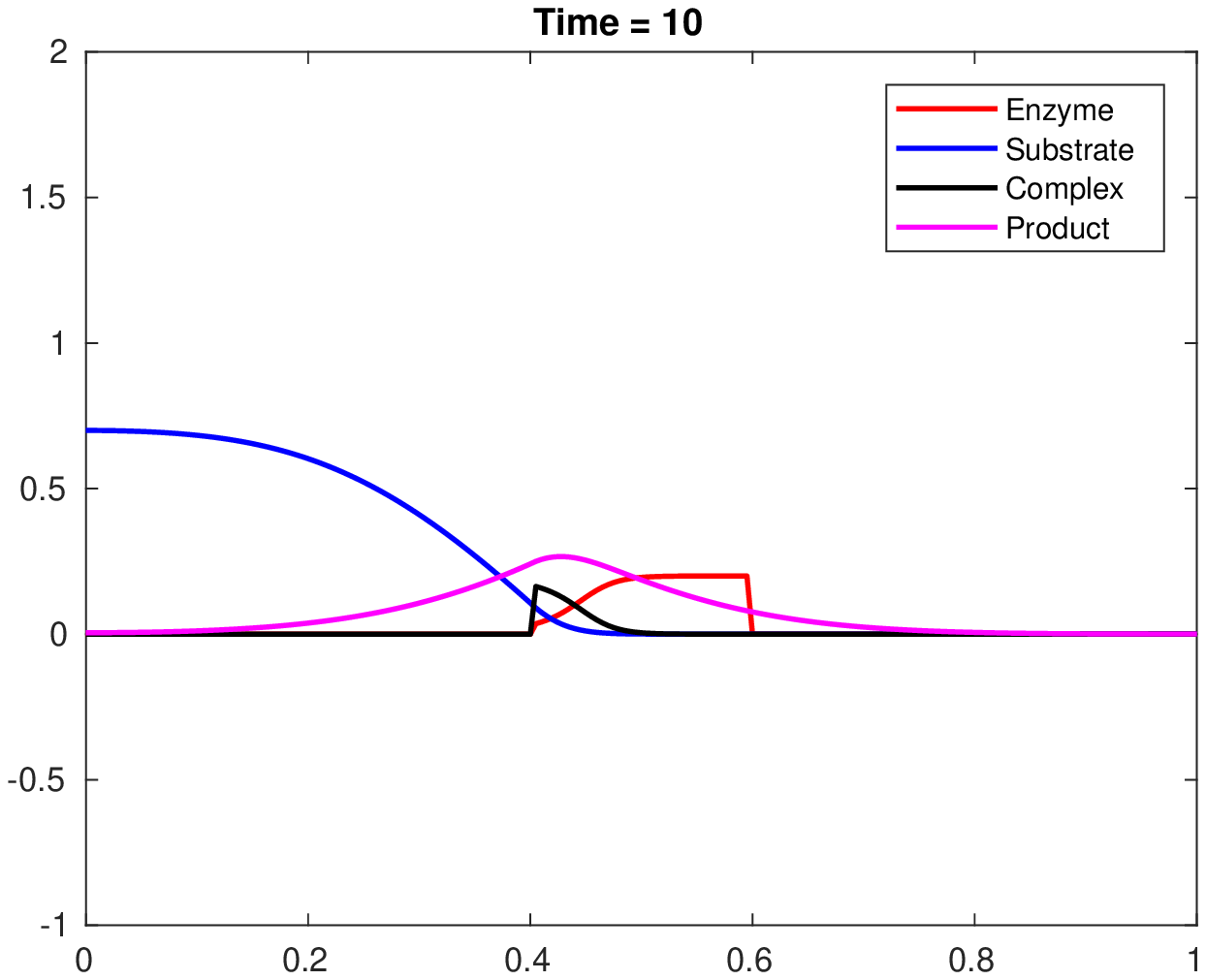}
         \caption{Concentrations at time $t=10$.}
     \end{subfigure}
     \linebreak     
     \hfill
     \begin{subfigure}[b]{0.48\textwidth}
         \centering
         \includegraphics[width=\textwidth]{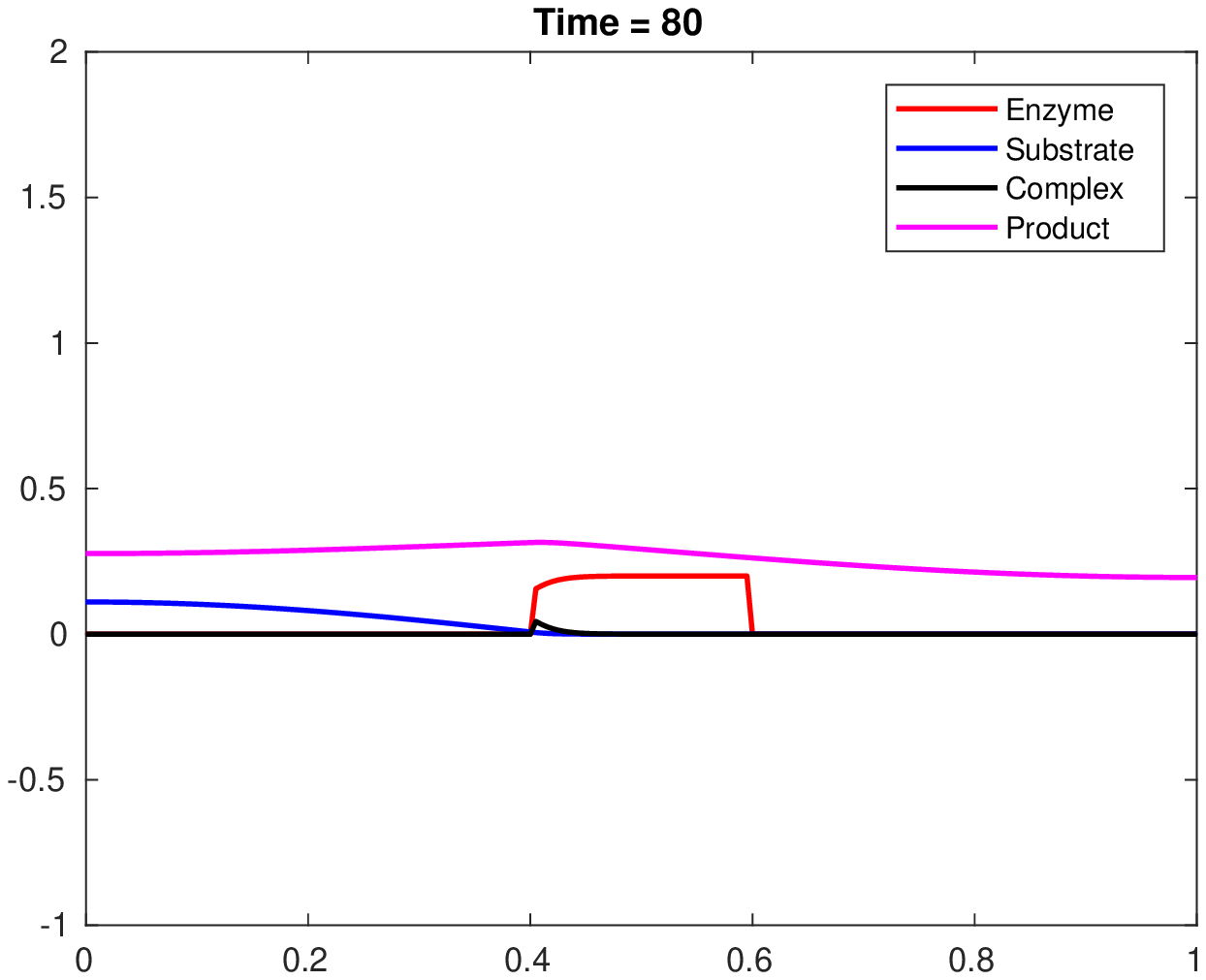}
         \caption{Concentrations  at time $t=80$.}
     \end{subfigure}
     \hfill
      \begin{subfigure}[b]{0.48\textwidth}
              \centering
             \includegraphics[width=\textwidth]{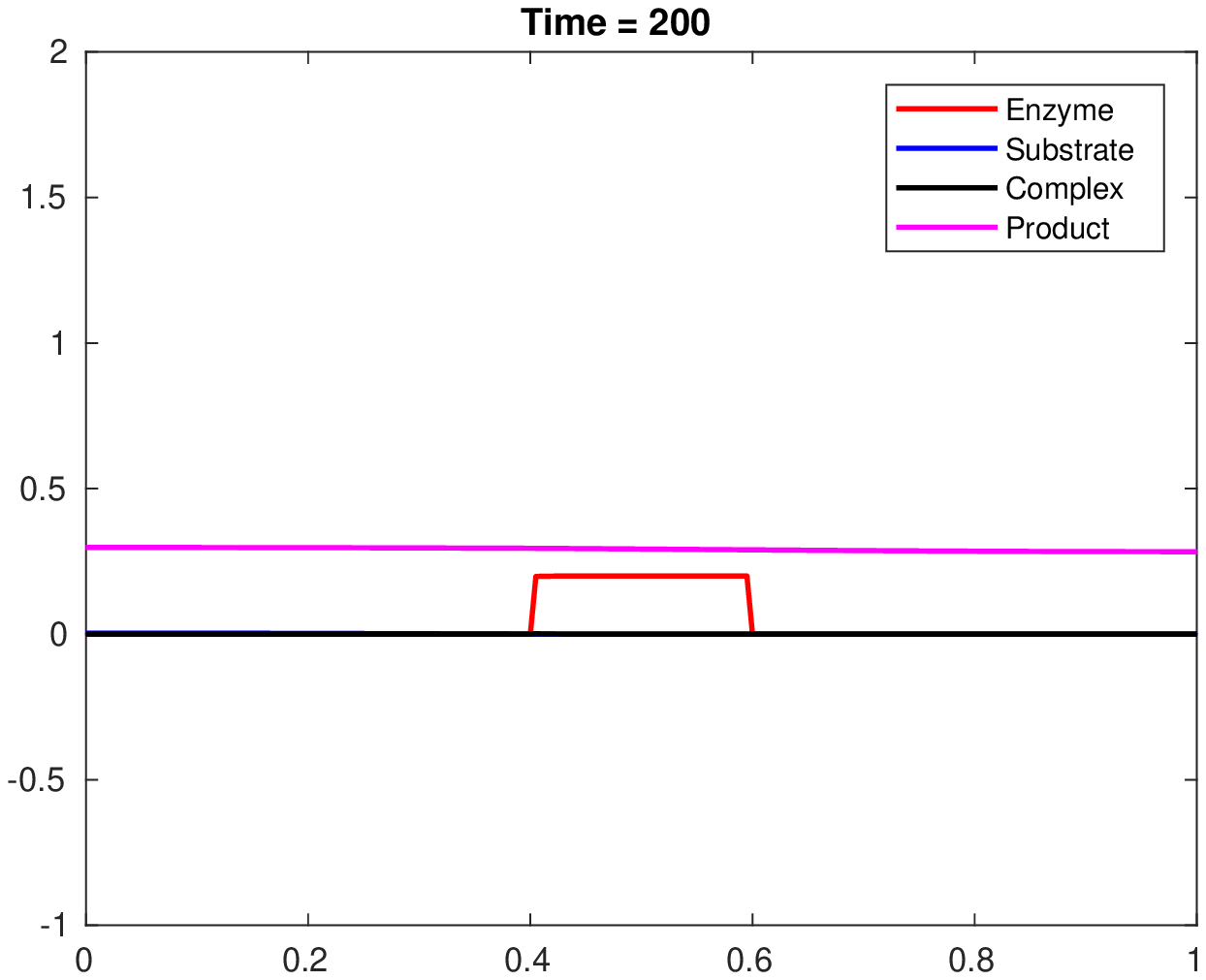}
              \caption{Concentrations at time $t=200$.}
          \end{subfigure}
        \caption{The evolution of enzyme, complex and substrate in the case where \eqref{lower_bounded_e0+c0} is not fulfilled. }
        \label{fig1}
\end{figure}
We end the main body of our work with a numerical simulation, which can be seen in Fig. \ref{fig1}, that depicts the case where $c_0 \equiv 0$, $e_0$ is not bounded away from zero, and $\mathrm{supp}e_0 \cap \mathrm{supp}s_0 = \emptyset$. More precisely, we consider $$\Omega = (0,1),\; k_f = 100,\; k_r = k_c = 1,\; d_e = d_c = 0, \;d_s = d_p= 0.02,$$
$$e_0(x) = 0.2\chi_{(0.4,0.6)}(x), \quad s_0(x) = 1.5\chi_{(0.1,0.3)},  \quad \text{ and } \quad c_0(x)=p_0(x) =0.$$
Under these assumptions we see that the equilibrium is given by
\begin{equation*}
\einf(x) = 0.2\chi_{(0.4,0.6)}(x), \quad \sinf = \cinf = 0, \quad \pinf = 0.3.
\end{equation*}

As expected, we see in Figure \ref{fig1}(B) that when the substrate $s$ diffuses to the region where the enzyme concentration is non-zero, it gets converted into the complex, which subsequently produces the product. This procedure continues to dissolve the substrate, as seen in Figure \ref{fig1}(C), and eventually converts it completely to the product, while the enzyme returns to its initial configuration in Figure \ref{fig1}(D). 
\subsection*{Acknowledgment} The authors would like to thank Phillippe Lauren\c{c}ot for pointing out a mistake in the proof of Lemma \ref{lem:heat_equation_regularisation}.
\appendix
\section{Additional Proofs}\label{secapp:additional_proofs}
In this appendix we will show the proofs of several results which we elected to defer in order to not disrupt the flow of the presented work.
\begin{lemma}\label{lem:integration_by_parts_of_absolute}
	Let $\O$ be a bounded domain with $C^1$ boundary and assume that $f\in   W^{1,p}\pa{\O}$ for some $p\in [1,\infty)$ and $g\in  W^{2,p^\prime}\pa{\O}$ where $p$ and $p^\prime$ are H\"older conjugated. Then if $\na g(x)\cdot \mathcal{n}(x)=0$, where $\mathcal{n}(x)$ is the outer normal to $\O$ at the point $x\in \partial \O$, we have that 
	\begin{equation}\label{eq:integration_by_parts_generalised}
		\int_{\O}f(x)\Delta g(x)dx = -\int_{\O}\nabla f(x) \cdot \na g(x) dx.
	\end{equation}
	Moreover, for any $c\in \R$ we have that $\max\pa{f(x),c}\in W^{1,p}\pa{\O}$ with
	\begin{equation}\label{eq:gradient_of_max}
		\nabla \max\pa{f(x),c} 
		=\begin{cases}
			\na f(x)	& f(x)>c  \\  0 & f(x)\leq c
		\end{cases}.
	\end{equation} 
	and
	\begin{equation}\label{eq:integration_by_parts_of_max}
		\int_{\O}\max\pa{f(x),c}\Delta g(x)dx = -\int_{\br{x\;|\; f(x)\geq c}}\nabla f(x) \cdot \na g(x) dx=-\int_{\br{x\;|\; f(x)>c}}\nabla f(x) \cdot \na g(x) dx.
	\end{equation}
\end{lemma}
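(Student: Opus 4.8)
The plan is to prove the three assertions in turn, the first being the analytic heart of the statement and the other two following from a standard truncation argument.

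\textbf{Step 1: the Green identity \eqref{eq:integration_by_parts_generalised}.} First I would establish \eqref{eq:integration_by_parts_generalised} by density. For $g\in C^\infty(\overline{\O})$ and $f\in W^{1,p}(\O)$ the classical Green formula reads
\begin{equation*}
\int_{\O} f(x)\,\Delta g(x)\,dx = -\int_{\O}\na f(x)\cdot\na g(x)\,dx + \int_{\partial\O} f(x)\,(\na g(x)\cdot\mathcal{n}(x))\,d\sigma(x),
\end{equation*}
the boundary integral being interpreted via the trace operator $W^{1,p}(\O)\to L^p(\partial\O)$ for $f$ and the classical restriction for $g$. Since $C^\infty(\overline{\O})$ is dense in $W^{2,p'}(\O)$ (the $C^1$ regularity of $\partial\O$ suffices), pick $g_m\to g$ in $W^{2,p'}(\O)$. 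Then $\Delta g_m\to\Delta g$ in $L^{p'}(\O)$, and $\na g_m\to\na g$ in $W^{1,p'}(\O)^n$, so their traces converge in $L^{p'}(\partial\O)^n$ by the $W^{1,p'}(\O)\to L^{p'}(\partial\O)$ trace theorem; since $\na g\cdot\mathcal{n}=0$ on $\partial\O$, Hölder's inequality lets all three terms pass to the limit and the boundary term drops out, yielding \eqref{eq:integration_by_parts_generalised}. (Remark \ref{rem:outward_gradient_is_defined} already records that $\na g\cdot\mathcal{n}$ is well defined.)

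\textbf{Step 2: the truncation formula \eqref{eq:gradient_of_max}.} Next I would invoke the chain rule for Sobolev functions applied to the Lipschitz map $\Phi_c(t)=\max(t,c)$: one has $\Phi_c\circ f\in W^{1,p}(\O)$ with $\na(\Phi_c\circ f)=\Phi_c'(f)\,\na f$ a.e., where $\Phi_c'=\chi_{(c,\infty)}$ (see \cite{LiebLoss}). The key point is that $\na f=0$ a.e.\ on the level set $\br{x\;|\;f(x)=c}$, so the value assigned to $\Phi_c'$ at the non-differentiability point $c$ is irrelevant and \eqref{eq:gradient_of_max} follows. If one prefers a self-contained argument, one approximates $\Phi_c$ uniformly on compacta by smooth nondecreasing $\Phi^{(\eps)}$ with $0\le(\Phi^{(\eps)})'\le 1$, for which $\na(\Phi^{(\eps)}\circ f)=(\Phi^{(\eps)})'(f)\na f$ holds by the smooth chain rule, and then passes to the limit by dominated convergence, again using the a.e.\ vanishing of $\na f$ on $\br{x\;|\;f(x)=c}$.

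\textbf{Step 3: conclusion \eqref{eq:integration_by_parts_of_max}.} Since $\max(f,c)\in W^{1,p}(\O)$ by Step 2, I apply Step 1 with $f$ replaced by $\max(f,c)$ and then use \eqref{eq:gradient_of_max}:
\begin{equation*}
\int_{\O}\max(f(x),c)\,\Delta g(x)\,dx = -\int_{\O}\na\max(f(x),c)\cdot\na g(x)\,dx = -\int_{\br{x\;|\;f(x)>c}}\na f(x)\cdot\na g(x)\,dx ;
\end{equation*}
since $\na f=0$ a.e.\ on $\br{x\;|\;f(x)=c}$, the domain of integration may be replaced by $\br{x\;|\;f(x)\ge c}$ without changing the value, giving both forms in \eqref{eq:integration_by_parts_of_max}. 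The only non-bookkeeping point is Step 1: one must be slightly careful that the boundary term in Green's formula genuinely converges to zero along a $W^{2,p'}$-approximating sequence, which is why the traces of the \emph{first} derivatives of $g_m$ must be controlled in $L^{p'}(\partial\O)$ rather than merely $g_m$ itself; everything else is routine.
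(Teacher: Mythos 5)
Your proposal is correct and follows essentially the same route as the paper: a density/trace argument to kill the boundary term in Green's formula (the paper approximates both $f$ and $g$ by smooth functions simultaneously, whereas you take the formula for $f\in W^{1,p}$ and smooth $g$ as the starting point and approximate only $g$, but the mechanism — convergence of $T(\na g_m)\cdot\mathcal{n}$ to $\na g\cdot\mathcal{n}=0$ in $L^{p'}(\partial\O)$ — is identical), followed by the Lipschitz chain rule from \cite{LiebLoss} together with the fact that $\na f=0$ a.e.\ on the level set $\br{x\;|\;f(x)=c}$ to obtain \eqref{eq:gradient_of_max} and pass between the strict and non-strict level sets in \eqref{eq:integration_by_parts_of_max}. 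No genuine differences or gaps beyond this reorganization.
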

\begin{remark}\label{rem:outward_gradient_is_defined}
	The normal derivative of $g$, $\na g(x)\cdot \mathcal{n}(x)$ on $\partial \O$ is indeed well defined. This follows from a standard Trace theorem that states that when $U$ is a bounded open set with $C^1$ boundary there exists a linear operator
	$$T:W^{1,p}\pa{U}\to L^p\pa{\partial U}$$
	such that
	$$T u=u\vert_{\partial U},\qquad \text{ when }u\in W^{1,p}\pa{U}\cap C\pa{\overline{U}}$$
	and
	$$\norm{Tu}_{L^p\pa{U}}\leq C\norm{u}_{W^{1,p}\pa{U}},$$
	for some fixed geometric constant $C>0$. A proof can be found in \cite[Theorem 1, pp. 272]{evans2010partial}.\\
	The expression $\na g(x)\cdot \mathcal{n}(x)$ on $\partial \O$ is to be understood as $T\pa{\na g(x)}\cdot \mathcal{n}(x)$.
\end{remark}
\begin{proof}
 We start by noticing that for any $f,g\in C^\infty\pa{\O}$ we have that
 \begin{equation}\label{eq:integration_by_parts}
 	\int_{\O}f(x)\Delta g(x)dx = \int_{\partial \O}f(x)\na g(x)\cdot \mathcal{n}(x)dx -\int_{\O}\nabla f(x) \cdot \na g(x) dx,
 \end{equation}
by a simple integration by parts. Since any function in $W^{m,p}\pa{\O}$ can be approximated by a $C^\infty\pa{\overline{\O}}$ sequence in the $W^{m,p}\pa{\O}$ norm (see \cite{evans2010partial}, for instance) 
we can find a sequence of $C^\infty\pa{\overline{\O}}$ functions, $\br{f_n}_{n\in\N}$ and $\br{g_n}_{n\in\N}$, such that
$$\norm{f_n-f}_{W^{1,p}\pa{\O}}\underset{n\to\infty}{\longrightarrow}0,\qquad \norm{g_n-g}_{W^{2,p^{\prime}}\pa{\O}}\underset{n\to\infty}{\longrightarrow}0,$$
from which we immediately deduce that
\begin{equation}\label{eq:limit_of_approxiation_for_integrals}
	\begin{gathered}
		\lim_{n\to\infty}\int_{\O}f_n(x)\Delta g_n(x)dx=\int_{\O}f(x)\Delta g(x)dx,\\
		\lim_{n\to\infty}\int_{\O}\nabla f_n(x) \cdot \na g_n(x) dx=\int_{\O}\nabla f(x) \cdot \na g(x) dx.
	\end{gathered}	
\end{equation}
Using the standard Trace theorem, mentioned in Remark \ref{rem:outward_gradient_is_defined}, we see that the traces of $f$ and $\na g$, $T\pa{f}$ and $T\pa{\na g}$ (to be thought of as a vector whose components are $T$ of the partial derivatives of $g$), are well defined on $\partial \O$ and
$$\norm{f_n-T(f)}_{L^p\pa{\partial \O}}=\norm{T\pa{f_n-f}}_{L^p\pa{\partial\O}}\leq C\norm{f_n-f}_{W^{1,p}\pa{\O}}\underset{n\to\infty}{\longrightarrow}0$$
and
$$\norm{\na g_n-T(\na g)}_{L^{p^\prime}\pa{\partial\O}}=\norm{T\pa{\na g_n-\na g}}_{L^p\pa{\partial\O}}\leq C\norm{g_n-g}_{W^{2,p^{\prime}}\pa{\O}}\underset{n\to\infty}{\longrightarrow}0.$$
As such, using the Neumann boundary condition, we find that
\begin{equation}\label{eq:first_approxiamtion_of_lemma_special}
	\begin{gathered}
		\norm{\na g_n(x)\cdot \mathcal{n}(x)}_{L^{p^\prime}\pa{\partial \O}}=\norm{\pa{T\pa{\na g_n(x)}-T\pa{\na g(x)}}\cdot \mathcal{n}(x)}_{L^{p^\prime}\pa{\partial \O}}\\
		\leq \norm{\mathcal{n}}_{L^\infty\pa{\partial \O}}\norm{T\pa{\na g_n(x)-\na g(x)}}_{L^{p^\prime}\pa{\partial \O}}\underset{n\to\infty}{\longrightarrow}0
	\end{gathered}
\end{equation}
which implies that
\begin{equation}\label{eq:second_approxiamtion_of_lemma_special}
	\lim_{n\to\infty}\int_{\partial \O}f_n(x)\na g_n(x)\cdot \mathcal{n}(x)dx =0.
\end{equation}
Combining \eqref{eq:integration_by_parts}, \eqref{eq:limit_of_approxiation_for_integrals} and \eqref{eq:second_approxiamtion_of_lemma_special} gives us \eqref{eq:integration_by_parts_generalised}.

The facts that $\max\pa{f(x),c}$ belongs to $W^{1,p}\pa{\O}$ and that its derivative is given by \eqref{eq:gradient_of_max} follow from Corollary 6.18 and the discussion that follows in \cite{LiebLoss}. Combining the above with the fact that any function $f\in  W^{1,p}\pa{\O}\subset W^{1,1}_{\mathrm{loc}}\pa{\O}$ satisfies that $\na f(x)=0$ for almost every $x$ in $f^{-1}\pa{\br{c}}=\br{x\;|\;f(x)=c}$, which can be found in Theorem 6.19 in \cite{LiebLoss}, we see that
\begin{equation}\nonumber
	\nabla \max\pa{f(x),c} 
	=\begin{cases}
		\na f(x)	& f(x)\geq c  \\  0 & f(x)< c
	\end{cases}.
\end{equation} 
and together with \eqref{eq:integration_by_parts_generalised} and \eqref{eq:gradient_of_max} we conclude \eqref{eq:integration_by_parts_of_max}.
\end{proof}
Next we turn our attention to the proof of Theorem \ref{thm:existence_result_degenerate}, which requires the following lemma:
	\begin{lemma}\label{lem:mild_strong}
		Assume that $\Omega\subset \R^n$, $n\geq 1$, is a bounded, open domain with $C^{2+\zeta}, \zeta>0$, boundary. Assume that $d,T>0$, $u_0\in \LO{\infty}$ and that the function $f$ belongs to $L^q{(0,T;\LO{q})}$ for all $q\in [2,\infty)$. Let
		\begin{equation}\label{eq:expression_for_u}
			u(x,t)= e^{d\Delta t}u_0(x)+\int_0^t e^{d\Delta(t-\xi)}f(x,\xi)d\xi,
		\end{equation}
		where $e^{d\Delta t}$ is the semigroup generated by the operator $d\Delta$ with homogeneous Neumann boundary conditions on $L^q(\Omega)$. Then $u$ is a strong solution to
		\begin{equation}\label{eq:heat}
			\begin{cases}
				\partial_t u(x,t)-d\Delta u(x,t) =f(x,t)& x\in\O,\; t\in(0,T)\\
				u(x,0)=u_0(x) & x\in\O\\
				\p_\nu u(x,t) = 0, &x\in\p\Omega, t\in(0,T).
			\end{cases} 
		\end{equation}
	\end{lemma}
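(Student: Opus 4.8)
The plan is to split the Duhamel formula \eqref{eq:expression_for_u} as $u=v+w$ with $v(t)=e^{d\Delta t}u_0$ and $w(t)=\int_0^t e^{d\Delta(t-\xi)}f(\xi)\,d\xi$, treat $v$ by analytic semigroup theory and $w$ by maximal $L^q$-regularity, and then verify every requirement in the definition of a strong solution (Remark \ref{rem:strong_solution}). The first step is notational: for $q\in[2,\infty)$ let $A_q=-d\Delta$ on $\LO{q}$ with domain $D(A_q)=\br{u\in W^{2,q}(\O)\,:\,\na u\cdot\mathcal{n}=0\text{ on }\p\O}$ (the normal trace being well defined, cf.\ Remark \ref{rem:outward_gradient_is_defined}). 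Since $\p\O\in C^{2+\zeta}$, $-A_q$ generates a bounded analytic $C_0$-semigroup $\pa{e^{d\Delta t}}_{t\ge0}$ on $\LO{q}$, the semigroups for different $q$ are consistent, and $A_q$ enjoys maximal $L^q$-regularity ($\LO{q}$ is UMD and $A_q$ admits a bounded $H^\infty$-calculus; for $q=2$ this is classical Hilbert space theory). Because $\abs{\O}=1$, the data satisfy $u_0\in\LO{q}$ for every $q$, while $f\in L^q\pa{0,T;\LO{q}}$ is assumed for every $q\in[2,\infty)$, and one has the embeddings $\LO{q}\hookrightarrow\LO{p}$ whenever $p\le q$; consistency of the semigroups makes $u$ in \eqref{eq:expression_for_u} independent of the choice of $q$.

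Next I would handle $v$. Analyticity gives $v\in C([0,T];\LO{q})$ with $v(0)=u_0$, $v\in C^\infty((0,T];\LO{q})$, $v(t)\in D(A_q)$ for $t>0$, $\p_t v(t)=d\Delta v(t)$ for $t>0$, and the smoothing bound $\norm{\p_t v(t)}_{\LO{q}}+\norm{A_q v(t)}_{\LO{q}}\le Ct^{-1}\norm{u_0}_{\LO{q}}$. Combined with the elliptic estimate $\norm{v(t)}_{W^{2,q}(\O)}\le C\pa{\norm{A_q v(t)}_{\LO{q}}+\norm{v(t)}_{\LO{q}}}$ this puts $\p_t v$ and all spatial derivatives of $v$ up to second order in $L^q\pa{(\tau,T);\LO{q}}$ for every $0<\tau<T$ --- the cut-off at $\tau$ being forced only by the $t^{-1}$ singularity at the origin --- while $v(t)\in D(A_q)$ encodes $\na v(t)\cdot\mathcal{n}=0$ on $\p\O$ for each $t>0$. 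For $w$ I would invoke maximal $L^q$-regularity: since $f\in L^q\pa{0,T;\LO{q}}$ and $w(0)=0$, we obtain $w\in W^{1,q}\pa{0,T;\LO{q}}\cap L^q\pa{0,T;D(A_q)}$, hence $\p_t w,\Delta w\in L^q\pa{0,T;\LO{q}}$, $w\in L^q\pa{0,T;W^{2,q}(\O)}$ by elliptic regularity, $\p_t w-d\Delta w=f$ a.e.\ in $\O\times(0,T)$, $w\in C([0,T];\LO{q})$ with $w(0)=0$, and $\na w(t)\cdot\mathcal{n}=0$ on $\p\O$ for a.e.\ $t\in(0,T)$.

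Adding the two parts, $u=v+w$ lies in $C([0,T];\LO{q})$ with $u(0)=u_0$; its time derivative and spatial derivatives up to second order are in $L^q\pa{(\tau,T);\LO{q}}$ for every $0<\tau<T$, so $u(t_2)-u(t_1)=\int_{t_1}^{t_2}\p_t u(s)\,ds$ in $\LO{q}$ whenever $0<t_1\le t_2<T$, which is absolute continuity on $(0,T)$ with values in $\LO{q}$; the equation $\p_t u-d\Delta u=f$ holds a.e.\ in $\O\times(0,T)$; and $\na u\cdot\mathcal{n}=0$ a.e.\ on $\p\O\times(0,T)$. Finally, given an arbitrary $p\in[1,\infty)$, choosing $q=\max(p,2)\in[2,\infty)$ and using $\LO{q}\hookrightarrow\LO{p}$ transports all of these properties to $\LO{p}$, which is exactly the strong-solution notion of Remark \ref{rem:strong_solution}. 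The only genuinely non-elementary ingredient --- and the place where care is needed --- is the maximal $L^q$-regularity of the Neumann Laplacian on a $C^{2+\zeta}$ domain together with the identification $D(A_q)=\br{u\in W^{2,q}(\O):\na u\cdot\mathcal{n}=0\text{ on }\p\O}$, which is what converts the abstract regularity into the a.e.\ pointwise equation and the a.e.\ Neumann boundary condition; everything else is routine bookkeeping between the semigroup formulation and the concrete requirements.
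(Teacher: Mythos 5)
Your proposal is correct and follows essentially the same route as the paper: the identical decomposition $u=v+w$ into the homogeneous part (handled by analytic-semigroup smoothing, with the $L^q((\tau,T);W^{2,q})$ regularity only away from $t=0$ because $u_0$ is not in the trace space) and the Duhamel part (handled by maximal $L^q$-regularity, which the paper imports from Pr\"uss--Simonett, Theorem 6.2.3). Your extra remarks on the UMD/$H^\infty$-calculus background and the identification of $D(A_q)$ simply make explicit what the paper delegates to that reference.
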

	\begin{proof}
		Denoting by 
		$$u_2(x,t)=\int_0^t e^{d\Delta(t-\xi)}f(x,\xi)d\xi$$
		we see, according to \cite[Theorem 6.2.3]{PS16}, that for any $q\in [2,\infty)$ we have that {$$u_2\in L^q((0,T);W^{2,q}(\Omega))\cap W^{1,q}((0,T);\LO{q}).$$ }In particular, this implies that $u_2\in C^0([0,T];L^q(\Omega))$, and $u_2$ is a strong solution to \eqref{eq:heat} with $u_0\equiv 0$.\\
		To consider the general case we notice that we can't use the same considerations for
		$$u_1(x,t)=e^{d\Delta t}u_0(x)$$
		as $u_0$ does not necessarily belong to the right trace space. However, for every $t>0$, we can use the regularisation properties of the semigroup to conclude that {$u_1$ belongs to the space}  $C^0([0,T];\LO q)\cap C^1((0,T];\LO q)$ and {$u_1(\cdot,t)\in W^{2,q}(\Omega))$ for all positive $t$}. This entails that $u_1$ is continuous with respect to $\LO q$ and belongs to $L^q((\tau,T);W^{2,q}(\Omega)])\cap W^{1,q}((\tau,T);\LO{q})$ for every $\tau>0$ and $q\in [2,\infty)$. As a consequence, we obtain that $u_1$ is absolutely continuous as a $L^q(\Omega)$ valued function for positive times and is a strong solution to \eqref{eq:heat} with $f\equiv 0$. 	As $u=u_1+u_2$ we conclude the desired result.
	\end{proof}
\newcommand{\X}{\mathcal X}
\newcommand{\F}{\mathcal F}
\newcommand{\G}{\mathcal G}
\begin{proof}[Proof of Theorem \ref{thm:existence_result_degenerate}]
	The proof is based on a standard fixed point argument. For a fixed $T>0$ we denote by
	\begin{align*}
	\X &= \bigg\{\pa{e,s,c,p}\in \pa{\marcel{L^\infty}([0,T];\LO{\infty})}^4\;|\; (e(0),s(0),c(0),p(0)) = (e_0,s_0,c_0,p_0),\\
	&\qquad \text{ and } \quad \|(e,s,c,p)\|_{(\marcel{L^\infty}([0,T];\LO{\infty}))^4} \leq  \|(e_0,s_0,c_0,p_0)\|_{\LO{\infty}}+1:=M \bigg\},
	\end{align*}
	where 
	$$\norm{\pa{f_1,f_2,f_3,f_4}}_{(\marcel{L^\infty}([0,T];\LO{\infty}))^4} =\sup_{t\in [0,T]}\sum_{i=1}^{4}\norm{f_i}_{\LO{\infty}}.$$
	We define a map $\F$ 
		and $\G$ on $\X$ by
		\begin{align*}
		\F(e,s,c,p)(x,t) := &\left(e_0(x) + \int_0^t\pa{-k_fe(x,\xi)s(x,\xi) + (k_r+k_c)c(x,\xi)}d\xi, \right.\\
		&\quad e^{d_s\Delta t}s_0(x) + \int_0^t e^{ d_s\Delta (t-\xi)}\pa{-k_fe(x,\xi)s(x,\xi)+k_rc(x,\xi)}d\xi,\\
		&\quad c_0(x) + \int_0^t\pa{k_fe(x,\xi)s(x,\xi) - (k_r+k_c)c(x,\xi)}d\xi,\\
		&\quad \left.e^{ d_p\Delta t}p_0(x) + \int_0^te^{d_p\Delta (t-\xi)}k_cc(x,\xi)d\xi\right),
		\end{align*}
		and
		\begin{align*}
		\G(e,s,c,p)(x,t) :=  \ \F(e_+,s_+,c_+,p_+)(x,t),
		\end{align*}
		and where
	$$f_+:= \max\pa{f, 0}.$$ 
	We will now show that for $T$ small enough,  $\F$ is a contraction mapping from $\X$ into $\X$. This will show the existence of a unique bounded strong solution, at least on $(0,T)$. 
		Moreover, by showing that $\G$ is also a contraction mapping $\X$ into $\X$ admitting fixed point such that $e,s,c$ and $p$ are non-negative, we would be able conclude that this fixed point is in fact a fixed point for $\F$, and as such the strong solution we have found is in fact non-negative. 
	
	Clearly, $\F(e,s,c,p)(0)=\G(e,s,c,p)(0) = (e_0,s_0,c_0,p_0)$. Moreover, since
	\begin{equation}\label{eq:L_infty_bound_of_heat_flow}
	\|e^{d\Delta t}f\|_{\LO{\infty}} \leq \|f\|_{\LO{\infty}}
	\end{equation}
	and
	\begin{equation}\label{eq:L_infty_bound_fot_f_+}
	\norm{f_+}_{\LO{\infty} }\leq \norm{f}_{\LO{\infty}}
	\end{equation}
	we see that 
		\begin{align*}
		\|\F(e,s,c,p)\|_{(\marcel{L^\infty}([0,T];\LO{\infty}))^4} \leq \|(e_0,s_0,c_0,p_0)\|_{\LO{\infty}} + \mathcal{C}_0\pa{M^2+1}T
	\end{align*}
and
	\begin{align*}
	\|\G(e,s,c,p)\|_{(\marcel{L^\infty}([0,T];\LO{\infty}))^4} \leq \|(e_0,s_0,c_0,p_0)\|_{\LO{\infty}} + \mathcal{C}_0\pa{M^2+1}T
	\end{align*}
	for some constant $\mathcal{C}_0$ that depends is independent of $M$ and $T$. Choosing $T$ small enough so that
	$$\mathcal C_0(M^2+1)T\leq1$$
	we conclude that $\F$ and  $\G$ map $\X$ into $\X$ itself. 
	
	Next, using \eqref{eq:L_infty_bound_of_heat_flow} again, we find that
		\begin{align*}
		&\|\F(e_1,s_1,c_1,p_1) - \F(e_2,s_2,c_2,p_2)\|_{\pa{\marcel{L^\infty}([0,T];\LO{\infty})}^4}\\ &\leq \int_0^T\left\|-k_f\pa{e_{1}\pa{\xi}s_{1}\pa{\xi} - e_{2}\pa{\xi}s_{2}\pa{\xi}} + (k_r+k_c)\pa{c_{1}\pa{\xi} - c_{2}\pa{\xi}} \right\|_{\LO{\infty}}d\xi\\
		&\quad + \int_0^T\left\|-k_f\pa{e_{1}\pa{\xi}s_{1}\pa{\xi} - e_{2}\pa{\xi}s_{2}\pa{\xi}}+ k_r\pa{c_{1}\pa{\xi} -c_{2}\pa{\xi}} \right\|_{\LO{\infty}}d\xi\\
		&\quad + \int_0^T\left\|k_f\pa{e_{1}s_{1}\pa{\xi} - e_{2}\pa{\xi}s_{2}\pa{\xi}}- (k_r+k_c)\pa{c_{1}\pa{\xi} - c_{2}\pa{\xi}}\right\|_{\LO{\infty}}d\xi \\
		&\quad+ \int_0^T\left\|k_c\pa{c_{1}\pa{\xi} - c_{2}\pa{\xi}}\right\|_{\LO{\infty}}d\xi \\
		&\leq \mathcal{C}_1(M+1)T\left\|\pa{e_{1},s_{1},c_{1},p_{1})-(e_{2},s_{2},c_{2},p_{2}}\right\|_{(\marcel{L^\infty}([0,T];\LO{\infty}))^4}\\
		\intertext{and}
		&\|\G(e_1,s_1,c_1,p_1) - \G(e_2,s_2,c_2,p_2)\|_{\pa{\marcel{L^\infty}([0,T];\LO{\infty})}^4}\\
		=&\|\F(e_{1+},s_{1+},c_{1+},p_{1+}) - \marcel{\F}(e_{2+},s_{2+},c_{2+},p_{2+})\|_{\pa{\marcel{L^\infty}([0,T];\LO{\infty})}^4}\\
		&\leq \mathcal{C}_1(M+1)T\left\|(e_{1},s_{1},c_{1},p_{1})-(e_{2},s_{2},c_{2},p_{2})\right\|_{(\marcel{L^\infty}([0,T];\LO{\infty}))^4}
		\end{align*}
	for  a constant $\mathcal{C}_1$ that is independent of $M$ and $T$, where we have used the elementary inequality 
	$$|a_+ - b_+| \leq |a - b|.$$ 
	Restricting $T$ further so that
	$$\mathcal{C}_1(M+1)T<1$$ 
	we see that $\F$ and $\G$ are contraction mappings from $\X$ into $\X$, and therefore they each have a unique fixed point. \marcel{We denote} by $(\overline{e},\overline{s},\overline{c},\overline{p})$ the fixed point of $\G$. \marcel{According to Lemma \ref{lem:mild_strong} it is} a local strong solution to the system of equations\footnote{Note that by the definition of $\X$ and $\F$, the function $f$ from \eqref{eq:expression_for_u} is in $L^\infty([0,T];\LO{\infty})$, and as such the conditions of the Lemma are satisfied. }
	\begin{equation}\label{Sys_modified}
	\begin{cases}
	\p_t \overline{e}(x,t) = -k_f\overline{e}_{+}(x,t)\overline{s}_{+}(x,t) + (k_r+k_c)\overline{c}_{+}(x,t), & x\in \O,t>0,\\
	\p_t \overline{s}(x,t) - d_s\Delta \overline{s}(x,t) = -k_f\overline{e}_{+}(x,t)\overline{s}_{+}(x,t) + k_r\overline{c}_{+}(x,t),& x\in \O,t>0,\\
	\p_t \overline{c}(x,t) = k_f\overline{e}_{+}(x,t)\overline{s}_{+}(x,t) - (k_r+k_c)\overline{c}_{+}(x,t) ,& x\in \O,t>0,\\
	\p_t \overline{p}(x,t) - d_p\Delta \overline{p}(x,t) = k_c\overline{c}_{+}(x,t),& x\in \O,t>0,\\
	\p_\nu s(x,t) = \p_\nu p(x,t) = 0, &x\in\p\Omega, t>0,\\
	e(x,0) = e_0(x), \;s(x,0) = s_0(x),\; c(x,0) = c_0(x),\; p(x,0) = p_0(x), & x\in\Omega,
	\end{cases}
	\end{equation}
	on $(0,T)$. Let $T_{\mathrm{max}}$ be the maximal time of existence for the solution $(\overline{e},\overline{s},\overline{c},\overline{p})$. To show that $(\overline{e},\overline{s},\overline{c},\overline{p})$ is a global solution, i.e. $T_{\mathrm{max}}=+\infty$, it is enough to show that 
	\begin{equation}\label{eq:bound_for_global}
	\marcel{\norm{(\overline{e},\overline{s},\overline{c},\overline{p})}}_{(\marcel{L^\infty}([0,T];\LO{\infty}))^4}\leq C(T)
	\end{equation}
	for some continuous function $C(T):[0,\infty)\to [0,\infty)$ (see, for instance, \cite[Theorem 2.5.5]{zheng2004nonlinear}). The proof of the existence of such function is intertwined with the non-negativity property of $(\overline{e},\overline{s},\overline{c},\overline{p})$, which will also show that $(\overline{e},\overline{s},\overline{c},\overline{p})$ is in fact a solution to \eqref{Sys}. 
	
	Denoting by $f_{-}=\max\pa{0,-f}=\pa{-f}_+$ the so-called negative part of $f$, we notice that when $f$ is absolutely continuous with respect to $t$ so is $f_{-}$, and a.e. in $t$
	$$\frac{d}{dt}f^2_{-}(t) =2f_{-}(t)\frac{d}{dt}f_{-}(t)=-2f_{-}(t)\frac{d}{dt}f(t).$$
	As such, if $f(x,t)$ is a strong solution to 
	\begin{equation}\label{eq:non_negativity_strong_equations}
	\partial_t f(x,t) - d\Delta f(x,t)=g(t,x)-\alpha(x,t) f_{+}(x,t)		
	\end{equation}
	where $g(x,t)$ and $\alpha(x,t)$ are non-negative functions,  we see that by multiplying the above with $-f_{-}(x,t)\leq 0$, we find that
	$$\frac{1}{2}\partial_t f^2_{-}(x,t) - d\Delta f(x,t)f_{-}(x,t)=-g(t,x)f_{-}(x,t) \leq 0.$$
	Integrating over $\O\times (0,t)$ gives\footnote{The appearance of the norm of $\na f_{-}$, as well as its existence, is an immediate and simple variant of our discussion in Lemma \ref{lem:integration_by_parts_of_absolute} in Appendix \ref{secapp:additional_proofs}. Moreover, according to the definition of the strong solutions we find that $v$ \marcel{is absolutely continuous with respect to $L^2(\Omega))$}. As the $L^2(\Omega)$ scalar product is {bilinear} and continuous, we can apply the product rule to find that
		$$\frac{d}{dt}\norm{v(t)}_{\LO{2}}^2 = \frac{d}{dt}\int_{\O} v(x,t)v(x,t)dx=2\int_{\O}\p_t v(x,t)v(x,t)dx$$
	{for almost all $t>0$.}}
	$$\frac{1}{2}\norm{f_{-}(t)}^2_{\LO{2}}+d\int_{0}^{t}\norm{\na f_{-}(\xi)}^2_{\LO{2}}d\xi\leq \frac{1}{2}\norm{f_{-}(0)}^2_{\LO{2}},$$
	which implies that if $f(0)$ is non-negative, i.e. $f_{-}(0)\equiv0$ (a.e in $x$) then  $f_{-}(t)\equiv0$ (a.e in $x$) for any $t>0$ for which $f$ is a strong solution to  \eqref{eq:non_negativity_strong_equations}. As the equations for $\overline{e}$, $\overline{c}$, $\overline{s}$ and $\overline{p}$ are of the form \eqref{eq:non_negativity_strong_equations}, we conclude the non-negativity of $\overline{e}$, $\overline{c}$, $\overline{s}$ and $\overline{p}$, and as such the fact that it is in fact a solution to \eqref{Sys}. 
	
	Lastly, we shall show that \eqref{eq:bound_for_global} is valid for a constant function $C(T)$ which will show both the global existence and uniform boundedness, concluding the proof. \\
	Indeed, using the definition strong solution (Duhamel's formula, as is expressed in the definition of $\F$) and the non-negativity of $\overline{e}$ and $\overline{c}$ we see that 
	$$\max\pa{\abs{\overline{e}(x,t)},\abs{\overline{c}(x,t)}}=\max\pa{\overline{e}(x,t),\overline{c}(x,t)} \leq \overline{e}(x,t)+\overline{c}(x,t)= \norm{e_0}_{\LO{\infty}}+\norm{c_0}_{\LO{\infty}}.$$
	Thus, for any $T$ for which $\overline{e}$ and $\overline{c}$ are strong solutions to \eqref{Sys} we have that
	\begin{equation}\label{uni}
	\sup_{t\leq T}\max\pa{\norm{\overline{e}(t)}_{\LO{\infty}}, \norm{\overline{c}(t)}_{\LO{\infty}}}\leq \norm{e_0}_{\LO{\infty}}+\norm{c_0}_{\LO{\infty}}.
	\end{equation}
	It remains to show the boundedness of $\bs$ and $\bp$. Summing $\bs(t)$, $\bc(t)$, $\bp(t)$, and integrating over 
	 gives
	\begin{equation}\label{eq:L_1_bound_for_s_and_p}
		\|\bs (t)\|_{\LO{1}} + \|\bc(t)\|_{\LO{1}} + \|\bp(t)\|_{\LO{1}} = \|s_0\|_{\LO{1}} + \|c_0\|_{\LO{1}} + \|p_0\|_{\LO{1}} \quad \forall t\geq 0,
	\end{equation}
	where we have used the non-negativity of the solution again. For any $q\geq 2$ we have
	\begin{align*}
		\frac{d}{dt}\|\bs(t)\|_{\LO{q}}^q &= q\int_{\Omega}\bs^{q-1}(x,t)\p_t\bs(x,t) dx\\
		&\leq -q(q-1)d_s\int_{\O}\bs(x,t)^{q-2}|\na \bs(x,t)|^2dx + k_rq\int_{\O}\bs(x,t)^{q-1}\bc(x,t) dx\\
		&\leq -q(q-1)d_s\int_{\O}\bs(x,t)^{q-2}|\na \bs(x,t)|^2dx + k_rq\norm{\bs(t)}^{q-1}_{\LO{q}}\norm{\bc(t)}_{\LO{q}}\\
		&\leq -\frac{4(q-1)d_s}{q}\int_{\O}\left|\na\pa{\bs(x,t)^{\frac{q}{2}}}\right|^2dx + k_r\pa{(q-1)\|\bs(t)\|_{\LO{q}}^q + \|\bc(t)\|_{\LO{q}}^q}.
	\end{align*}
Using the uniform bound of $\bc$ in \eqref{uni} we find that
	\begin{equation}\label{b1}
		\frac{d}{dt}\|\bs(t)\|_{\LO{q}}^q + \frac{4(q-1)d_s}{q}\norm{\bs(t)^{\frac{q}{2}}}_{H^1(\Omega)}^2 \leq qC_{c_0,e_0}\pa{\|\bs\|_{\LO{q}}^q + 1}.
	\end{equation}
	Thanks to the continuous Sobolev embedding $H^1(\Omega)\hookrightarrow \LO{n^*}$ with
	\begin{equation*}
		n^*=\begin{cases}
			+\infty &\text{ if } n=1,\\
			<+\infty \text{ arbitrary } &\text{ if } n=2,\\
			\frac{2n}{n-2} &\text{ if } n\ge 3.
		\end{cases}
	\end{equation*}
	we find that
		\begin{equation}\label{b2}
		\norm{\bs^{\frac{q}{2}}}_{H^1(\Omega)}^2 \geq C_{n}\norm{\bs^{\frac{q}{2}}}_{\LO{n^\ast}}^{2}=C_{n}\norm{\bs}_{\LO{q_0}}^q,
	\end{equation}
	where $q_0=\frac{n^\ast q}{2}>q$. Using the interpolation inequality
	$$\norm{f}_{\LO{q}}^{q} \leq \norm{f}_{\LO{q_0}}^{\theta q}\norm{f}_{\LO{1}}^{\pa{1-\theta} q}$$
	where $\theta \in (0,1)$ satisfies $\frac{1}{q} = \frac{\theta}{q_0}+\frac{1-\theta}{1}$, together with \eqref{eq:L_1_bound_for_s_and_p} we see that any $\epsilon>0$ there exists a constant $C_\epsilon >0$ such that
	\begin{equation}\label{b3}
		\|\bs \|_{\LO{q}}^q \leq \epsilon\|\bs\|_{\LO{q_0}}^q + C_{\epsilon}.
	\end{equation}
	Combining \eqref{b1}, \eqref{b2} and \eqref{b3}, we see that for any $\eps>0$
	\begin{equation*}
		\frac{d}{dt}\|\bs (t)\|_{\LO{q}}^q + 2d_sC_n\|\bs(t)\|_{\LO{q}}^q \leq qC_{c_0,e_0}\pa{\epsilon\|\bs\|_{\LO{q_0}}^q + C_{\epsilon,}},
	\end{equation*}
	from which we get with a choice of $\eps$ small enough\footnote{$\eps=\frac{d_sC_n}{qC_{c_0,e_0}}$ will do}
	that
	\begin{equation}\label{eq:uniform_L_q_bound_for_s}
		\sup_{t\ge 0}\|\bs(t)\|_{\LO{q}} < +\infty
	\end{equation}
	for any $2\leq q < +\infty$. Applying a simple variant of Lemma \ref{lem:heat_equation_regularisation} with $\delta=0$, where the $L^\infty$ bound is on $\rpa{0,\frac{T}{2}}$ instead of $[0,1]$, shows that\footnote{The appropriate $f$ in this case is $f=-k_f e s +k_r c$ whose $L^p$ bound follows from the uniform $L^\infty$ bound on $e$ and $c$ and \eqref{eq:uniform_L_q_bound_for_s}.}
	\begin{equation*}
		\sup_{t\geq 0}\|\bs(t)\|_{\LO{\infty}} < +\infty.
	\end{equation*}
	The uniform bound of $\bp$ follows in the exact same way.
\end{proof}

We conclude the appendix with the proofs of lemmas \ref{lem:heat_equation_regularisation} and \ref{lem:heat_equation_avergae_convergence}.
\begin{proof}[Proof of Lemma \ref{lem:heat_equation_regularisation}]
	A known estimate on the kernel of the heat equation (see for instance \cite[Theorem 3.2.9, page 90]{Davies89}) implies that
	the solution to the heat equation with homogeneous Neumann boundary condition and initial datum  in $L^p\pa{\O}$ satisfies
	\begin{equation}\label{semigroup}
		\norm{u(t)}_{L^\infty(\Omega)} = \norm{e^{d\Delta t}u_0}_{\LO{\infty}}\leq C_{d}t^{-\frac{n}{2p}}\norm{u_0}_{L^p(\Omega)} \quad  \forall \textcolor{red}{0<t\leq 1},
	\end{equation}
	for some fixed known constant that depends on $\O$ and the diffusion coefficient $d$\footnote{One can easily extent \eqref{semigroup} to $0<t<T$ for any given $T>0$. The constant $C_{d}$ in that case will become dependent on $T$ (and one should denote it by $C_{d,T}$).}. As such, a solution to the equation
	$$\begin{cases}
		\partial_t u(x,t)-d\Delta u(x,t) =f(x,t)& x\in\O,\; t>0\\
		u(x,0)=0 & x\in\O\\
		\p_\nu u(x,t) = 0, &x\in\p\Omega, t>0,
	\end{cases} $$
	would satisfy, according to the Duhamel formula, 
	\begin{equation}\nonumber
		u(x,t+1) = e^{d\Delta }u(x,t)+\int_{0}^{1} e^{-d\Delta s}f(x,t+s)ds,\quad t>0
	\end{equation}
	which, together with \eqref{semigroup}, implies that
	$$\norm{u(t+1)}_{\LO{\infty}} \leq C_d\pa{\norm{u(t)}_{\LO{p}}+\int_{0}^{1}\pa{1+s^{-\frac{n}{2p}}}\norm{f(t+s)}_{\LO{p}}ds}.$$
	Using conditions \eqref{eq:heat_conditions} we find that 
	$$\norm{u(t+1)}_{\LO{\infty}} \leq \mathcal{C}C_d\pa{1+\int_{0}^{1}\pa{1+s^{-\frac{n}{2p}}}e^{-\delta s}ds}e^{-\delta t}.$$
	If $p>\frac{n}{2}$ then 
	$$\int_{0}^{1}s^{-\frac{n}{2p}} e^{-\delta s}ds\leq \int_{0}^{1}s^{-\frac{n}{2p}}ds=C_{n,p}<\infty,$$ 
	and we conclude that for all $t\geq 1$
	$$\norm{u(t)}_{\LO{\infty}}\leq \mathcal{C}C_d \pa{2+C_{n,p}}e^{-\delta (t-1)}.$$
	As for any $t\in[0,1]$ we have that
	$$\sup_{t\in [0,1]}\norm{u(t)}_{\LO{\infty}}\leq \su \leq \su e^{\delta}e^{-\delta t}$$
	due to \eqref{eq:heat_conditions}, we find that 
	$$\norm{u(t)}_{\LO{\infty}} \leq e^{\delta}\max\pa{\su , \mathcal{C}C_d\pa{2+C_{n,p}}}e^{-\delta t},$$
	which is the desired result.
\end{proof}

\begin{proof}[Proof of Lemma \ref{lem:heat_equation_avergae_convergence}]
	Defining the function $v(x,t)=u(x,t)-\overline{u}(t)$ we find that $v$ solves the equation 
		\begin{equation}\nonumber
		\begin{cases}
			\partial_t v(x,t)-d\Delta v(x,t) =f(x,t)-\partial_t \overline{u}(t)& x\in\O,\; t>0\\
			u(x,0)=u_0(x)-\overline{u_0} & x\in\O\\
			\p_\nu u(x,t) = 0, &x\in\p\Omega. t>0.
		\end{cases} 
	\end{equation}
Denoting by $\widetilde{f}(x,t)=f(x,t)-\partial_t \overline{u}(t)$, multiplying the first line of the equation by $v(x,t)$ and integrating over the domain yields the equality
$$\partial_t \norm{v(t)}_{\LO{2}}^2=-2d\norm{\na v(x,t)}_{\LO{2}}^2+ 2\int_{\O}\widetilde{f}(x,t)v(x,t)dx.$$
Thus, using the Poincar\'e inequality \eqref{eq:poincare} and the fact that $\overline{v}(t)=0$ we find that
$$\partial_t \norm{v(t)}_{\LO{2}}^2 \leq -\frac{2d}{C_P}\norm{v(t)}_{\LO{2}}^2+2\norm{\widetilde{f}(t)}_{\LO{2}}\norm{v(t)}_{\LO{2}} $$
$$\leq -\frac{2d}{C_P}\pa{1-\eps}\norm{v(t)}_{\LO{2}}^2 + \frac{C_{P}\norm{\widetilde{f}(t)}_{\LO{2}}^2}{2d\eps},$$
where we have used the fact that for any $\delta>0$ and $a,b\geq 0$
$$2ab \leq \delta a^2 + \frac{b^2}{\delta}$$
and chose $\delta = \frac{2d\eps}{C_P}$. The result follows from a simple integration and the fact that by integrating our heat equation over $\O$ we find that
$$\partial_t \overline{u}(t) = \int_{\O}f(x,t)dx=\overline{f}(t).$$
\end{proof}

\end{document}